\newcommand{\bm}[1]{{\mbox{\boldmath $#1$}}}
\newtheorem{definition}{Definition}
\newtheorem{assumption}{Assumption}
\newtheorem{theo}{Theorem}
\newtheorem{rem}{Remark}
\newtheorem{lem}{Lemma}
\def\rank{{\rm rank} \hspace{0.5mm}}
\def\im{{\rm im} \hspace{0.3mm}}
\def\tr{{\rm tr} \hspace{0.5mm}}
\def\rms{{}^{\rm s}\hspace{-0.2mm}}
\def\rmsK{{}^{\rm s}\hspace{-0.4mm}}
\def\rmsJ{{}^{\rm s}\hspace{-0.5mm}}
\def\h{\hspace{-0.5mm}}
\def\H{\hspace{-1.0mm}}
\def\hH{\hspace{-1.5mm}}
\def\HH{\hspace{-2.0mm}}
\def\HHH{\hspace{-3.0mm}}
\def\qed{\hfill $\blacksquare$}
\journal{Systems \& Control Letters}
\begin{document}

\begin{frontmatter}

\title{Policy Gradient Method for LQG Control via Input–Output‑History Representation: Convergence to $\mathcal{O}(\epsilon)$‑Stationary Points
}


\author[uec]{Tomonori Sadamoto\corref{cor1}}
\ead{sadamoto@uec.ac.jp}

\author[purdue]{Takashi Tanaka}
\ead{tanaka16@purdue.edu}

\cortext[cor1]{Corresponding author}

\affiliation[uec]{
  organization={Department of Mechanical and Intelligent Systems Engineering, 
                Graduate School of Informatics and Engineering, 
                The University of Electro-Communications},
  addressline={1-5-1 Chofugaoka},
  city={Chofu},
  postcode={182-8585},
  state={Tokyo},
  country={Japan}
}

\affiliation[purdue]{
  organization={School of Aeronautics and Astronautics, and of Electrical and Computer Engineering, Purdue University},
  addressline={701W. Stadium Ave.},
  city={West Lafayette},
  postcode={47907‑2045},
  state={Indiana},
  country={USA}
}

\begin{abstract}
We study the policy gradient method (PGM) for the linear quadratic Gaussian (LQG) dynamic output‑feedback control problem using an \emph{input–output‑history} (IOH) representation of the closed‑loop system. First, we show that any dynamic output‑feedback controller is equivalent to a static partial‑state feedback gain for a new system representation characterized by a finite-length IOH. Leveraging this equivalence, we reformulate the search for an optimal dynamic output feedback controller as an optimization problem over the corresponding partial-state feedback gain. 
Next, we introduce a relaxed version of the IOH-based LQG problem by incorporating a small process noise with covariance $\epsilon I$ into the new system to ensure coerciveness, a key condition for establishing gradient‑based convergence guarantees. Consequently, we show that a vanilla PGM for the relaxed problem converges to an {\it $\mathcal{O}(\epsilon)$‑stationary} point, i.e., $\overline{K}$ satisfying $\|\nabla J(\overline{K})\|_F \le \mathcal{O}(\epsilon)$, where $J$ denotes the original LQG cost. Numerical experiments empirically indicate convergence to the vicinity of the globally optimal LQG controller.
\end{abstract}

%



\begin{keyword}
Linear Quadratic Gaussian\sep Policy Gradient Method\sep Non-convex Optimization
\end{keyword}

\end{frontmatter}


\section{Introduction}
The linear quadratic Gaussian (LQG) control problem is one of the most fundamental problems in modern control theory \cite{kalman1961new,athans1971role}. Given the explicit knowledge of the system model and the cost function, it is widely recognized that the optimal policy is given by a dynamic output feedback law and can be synthesized by solving two Riccati equations \cite{soderstrom2002discrete}.

Motivated by many practical situations where the system model is unknown, there has been increased research interest in model-free approaches where the policy is learned from the system's input-output behavior. 
To achieve this goal, so-called \emph{indirect} approaches first attempt to estimate the system model \cite{zheng2021sample2, boczar2018finite} from the input-output data, and then apply a classical model-based synthesis.
Alternatively,  \emph{direct} approaches attempt to optimize the policy incrementally using data by certain update rules such as gradient methods.
While the direct approaches are natural given the popularity of the policy gradient methods (PGMs) in the learning theory community, recent studies have revealed prominent challenges in the PGM-based LQG synthesis. 
These challenges are primarily due to the complex optimization landscape underlying the LQG control problems. 
For example, recent works \cite{tang2021analysis,duan2022optimization,hu2023toward} have shown that optimization of the system matrices of dynamic output-feedback controllers has many stationary points. To make matters worse, the optimization problem is {\it not coercive} \cite{hu2023toward}; therefore, it is not even guaranteed that PGMs will converge to one of these numerous stationary points. Consequently, most studies have been limited to analyzing the optimization landscape, while no theoretical analysis has been conducted on the convergence of PGMs.

In this study, we consider a particular parameterization of dynamic controllers and propose a vanilla PGM with a theoretical guarantee of convergence to {\it $\mathcal{O}(\epsilon)$‑stationary} point, i.e., $\overline{K}$ satisfying $\|\nabla J(\overline{K})\|_F \le \mathcal{O}(\epsilon)$, where $J$ denotes the original LQG cost and $\epsilon \ll 1$ is a design parameter. While our analysis assumes that the plant model is known in advance, the fact that the PGM converges to $\mathcal{O}(\epsilon)$‑stationary points offers valuable insights into the convergence analysis of model-free PGM-based LQG synthesis. The contributions of this paper are as follows:

First, we show that designing a dynamic output feedback controller without a feed-through term for a partially observable system affected by process and measurement noise is equivalent to designing a static partial-state feedback gain (which we refer to as the {\it IOH gain} hereafter) for a new system whose internal state consists of a finite-length history of the input-output data and noise. Hereafter, we refer to the new system as {\it history dynamics}. We show how a dynamic output feedback controller can be written as an equivalent IOH gain and \emph{vice versa}. This equivalence enables us to search for $\mathcal O(\epsilon)$-stationary points of the LQG problem via the search for the IOH gain. 

Second, we propose a vanilla PGM with a theoretical guarantee of convergence to $\mathcal O(\epsilon)$-stationary points of the LQG problem. The LQG cost as a function of the IOH gain is still not coercive in general. To address this issue, we introduce a relaxed version of the LQG problem, where a small process noise with covariance $\epsilon I$ is added to the history dynamics. This relaxation ensures that the cost function becomes coercive over the set of stabilizing IOH gains and smooth over a sublevel set. Furthermore, we show that $\overline{K}$, a stationary point of the relaxed LQG cost, satisfies $\|\nabla J(\overline{K})\|_F \leq \mathcal{O}(\epsilon)$ where $J$ is the original LQG cost function. As a result, we establish that a vanilla PGM for the relaxed problem can find an $\mathcal{O}(\epsilon)$-stationary point of the original LQG cost. While this theoretical guarantee is limited to the convergence to $\mathcal O(\epsilon)$-stationary points, our numerical study suggests that the PGM converges to the {\it optimal LQG} controller if $\epsilon$ is sufficiently small. Moreover, even when the controller dimension is restricted to be less than that of the system, we show that the PGM can find a better reduced-order controller than a conventional controller reduction method.

\subsection*{Related Works}
Convergence of the PGM for discrete-time linear quadratic regulator (LQR) problems was studied by \cite{fazel}. Despite the non-convex nature of the problem, they proved that the PGM converges to the globally optimal solution because the problem has benign properties such as smoothness, coerciveness, and gradient dominance. This result has been extended to PGMs for continuous-time systems \cite{mohammadi2019global, Mih3}, zero-sum games \cite{bu2019global}, and distributed controls \cite{li2021distributed}. While these results are limited to state feedback synthesis, extensions to static output-feedback synthesis have been considered \cite{fatkhullin2021optimizing,takakura2022structured,bu2019topological,duan2023optimization}. Unlike the state-feedback case, the set of stabilizing controllers is typically disconnected and has many stationary points, making the convergence analysis more complex. Furthermore, even if a globally optimal solution is found, the achievable control performance is inherently more limited than that of {\it dynamic} output-feedback controllers. These challenges have motivated the line of research on dynamic output-feedback synthesis.

While the classical model-based methods for the LQG dynamic output feedback synthesis (e.g., via Riccati equations) are well-established, PGM-based approaches present notable challenges. 
The existing approaches can be organized into three categories in terms of the parameterization methods of dynamic controllers. 
The first category uses the state space model (the system matrices) to parametrize dynamic controllers \cite{tang2021analysis, duan2022optimization, hu2023toward, zheng2023benign}. 
While this approach is naturally inspired by the classical model-based synthesis, its optimization landscapes have turned out to be extremely complex, in part due to the inherent symmetry caused by similarity transformations. This approach enjoys neither gradient dominance nor coerciveness; thus, convergence to at least stationary points is not guaranteed. The second approach is based on a convex reparameterization of the controller \cite{mohammadi2019global,sun2021learning}, which is also well-known within the framework of modern control theory. 
This approach allows for a reformulation of the original LQG control problem as a convex optimization problem in a reparametrized space, which facilitates the search for a global solution. Despite its elegance, convex reparameterization is model-based in that it requires explicit knowledge of the system model. Therefore, this approach is inconvenient in model-free scenarios in which the model is not known.

This paper focuses on the third approach, IOH parameterization, as explored in \cite{sadamoto2024policy, al2022behavioral}. 
The IOH-based approach differs from other parameterizations primarily in a conceptual sense: it offers a compact theoretical lens that recasts dynamic output-feedback controller design as static partial-state feedback controller design on a measurable IOH state, and it provides a bridge for extending results from state-feedback problems to the LQG setting. We note that the PGM over the IOH gains for the LQG control design was proposed in \cite{al2022behavioral}. However, no theoretical analysis was provided regarding the convergence of the algorithm. Dynamic output-feedback controller design via IOH parameterization has been considered for {\it noise-free} systems \cite{sadamoto2024policy}. However, proving the global convergence of PGMs to LQG control problems with noise inputs by extending the result in \cite{sadamoto2024policy} is not straightforward (as discussed in Section~\ref{Sec3C}) and remains an open challenge. In this paper, as a first step towards the convergence analysis, we show that the PGM via the IOH representation converges to $\mathcal O(\epsilon)$-stationary points. To the best of our knowledge, this is the first result to provide a PGM for general LQG control synthesis with a theoretical guarantee of convergence.


\subsection*{Notation}
For a sequence $\{x(t)\}_{t\in\mathbb{Z}}$ of column vectors and two integers $t_1$ and $t_2$ such that $t_1<t_2$,  we write $[x]_{t_1}^{t_2} := [x^{\top}(t_1), \ldots, x^{\top}(t_2)]^{\top}$. 
The Moore–Penrose inverse of a matrix $A \in \mathbb{R}^{n \times m}$ is denoted as $ A^{\dagger}$. 
For $A \in \mathbb R^{n \times n}$, its spectral radius is denoted as $\rho(A)$, the 2-induced norm as $\|A\|$, the Frobenius norm as $\|A\|_F$. 
We write $A > 0$ (resp. $A \geq 0$) if the matrix $A$ is positive definite (resp. positive semidefinite).
We denote by $A^{\frac{1}{2}}\geq 0$ the unique symmetric positive semidefinite square root of the matrix $A \geq 0$.
Given $A \in \mathbb R^{n_x \times n_x}$, $B \in \mathbb R^{n_x \times n_u}$, $C \in \mathbb R^{n_y \times n_x}$, and $L \in \mathbb{N}$, we define
$\mathcal R_L(A,B) := [A^{L-1}B, \ldots, B]$,  
$\mathcal O_L(A,C) := [C^{\top}, \ldots, (CA^{L-1})^{\top}]^{\top}$, and 
$\mathcal H_L(A,B,C) := [H_{i,j}]$ where $H_{i,j} \in \mathbb{R}^{n_y \times n_u}$ is the $(i,j)$-th block matrix defined as $H_{i,j} = 0$ if $i \leq j$, and $H_{i,j} = CA^{i-j-1}B$ otherwise. 

\section{LQG Problem}\label{sec2}
We consider a discrete-time linear system described as 
\begin{equation}\label{hat Sigma}
\rms{\bm \Sigma}: 
\left\{\H
\begin{array}{rcl}
    x(t+1) \HH &=& \HH Ax(t)+B u(t) + w(t)\\
    y(t) \HH &=& \HH Cx(t) +v(t)
\end{array}
\right.\HH , \quad t \geq 0,
\end{equation}
where $x(t) \in {\mathbb R}^{n_x}$ is the state with the initial condition $x(0) = 0$, $u(t) \in \mathbb R^{n_u}$ is the control input determined by the controller, 
$y(t) \in {\mathbb R}^{n_y}$ is the output observed by the controller, $w(t) \in \mathbb R^{n_w}$ is the process noise, and $v(t) \in \mathbb R^{n_v}$ is the measured noise. From \eqref{hat Sigma}, it is clear that $n_w = n_x$ and $n_v = n_y$. However, we retain all symbols for ease of presentation. We assume that signals $x(t)$, $w(t)$, and $v(t)$ cannot be directly measured by the controller. The system matrices $A$, $B$, and $C$ are assumed to be known in advance. In addition, we make the following assumptions. 
\begin{assumption}\label{ass_ABC}
The pair $(A,B)$ is stabilizable and the pair $(A,C)$ is observable. 
\end{assumption}
\begin{assumption}\label{ass_wv}
The noise processes satisfy
\begin{align}
    d(t) \h := \h \left[\hspace{-1.5mm}\begin{array}{c} w(t) \\ v(t) \end{array}\hspace{-1.5mm}\right]
    \hspace{-1mm} \overset{i.i.d.}{\sim}\hspace{-1mm} \mathcal N(0, V_d), \quad V_d \hspace{-0.5mm}=\hspace{-0.5mm} \left[\hspace{-1.5mm}\begin{array}{cc}V_w &  \\  & V_v \end{array}\hspace{-1.5mm}\right] \h \geq  0\label{def_d}
\end{align}
and $V_v > 0$. 
\end{assumption}
In this study, we aim to design a dynamic output-feedback controller of the form
\begin{equation}\label{dyn_K}
    \rmsK{\bm K} : \hspace{0pt}
\left\{\H
\begin{array}{rcl}
    \xi(t+1) \HH &=& \HH G \xi(t) + H y(t)\\
    u(t) \HH &=& \HH F \xi(t)
\end{array}
\right.\HH ,~ \xi \in \mathbb R^{n_{\xi}},~t \geq 0,
\end{equation}
to minimize the cost function
\begin{equation}\label{defJ_orig}
    \hH \rmsJ J(\rmsK {\bm K}) \h := \h \lim_{T \rightarrow \infty}\h \mathbb E \h
    \left[\h \frac{1}{T}\h \sum_{t = 0}^{T} y^{\top}(t)Qy(t) + u^{\top}(t)Ru(t)
    \right]\H ,
\end{equation}
where $Q > 0$ and $R > 0$ are given matrices. The positive definiteness of $Q$ is required to guarantee the stability of the closed-loop system based on the finiteness of the cost; see Remark~\ref{rem_Qdefinite} in \ref{prf_lem_stab}. 

It is well-known that the minimum of the cost function can be attained by the optimal {\it LQG controller} given by 
\begin{equation}\label{dyn_LQG}
    \rmsK {\bm K}_{\rm LQG} :
\left\{\H
\begin{array}{rcl}
    \xi(t+1) \HH &=&\HH  G_{\rm LQG} \xi(t) + H_{\rm LQG} y(t)\\
    u(t) \HH &=&\HH  F_{\rm LQG} \xi(t) 
\end{array}
\right.
\end{equation}
with $G_{\rm LQG} := A+BF_{\rm LQG}-H_{\rm LQG}C$, where $F_{\rm LQG}$ and $H_{\rm LQG}$ are determined by solving Riccati equations \cite{soderstrom2002discrete}. 
However, the main interest of this paper is to use a PGM (instead of Riccati equations or LMIs) to obtain the optimal policy \eqref{dyn_LQG}.
For this purpose, an appropriate parametrization of the dynamic controller \eqref{dyn_K} is needed, as discussed in the `Related Works'' section above. 
For example, the system matrices $(G,H,F)$ can be used to parametrize the controller. 
Unfortunately, $\rmsJ J(\rmsK {\bm K})$ as a function of $(G,H,F)$ has a complex landscape. For example, consider the case where $H = 0$, fixing $G$ to a Schur stable matrix and varying only $F$. In this case, even if $\|F\| \to \infty$, the transfer function of the controller remains zero, thus $\rmsJ J$ also remains unchanged. This example illustrates that $\rmsJ J$ is not coercive along certain directions in the $(G, H, F)$ space. As a consequence, even the convergence of a gradient algorithm to a stationary point cannot be theoretically guaranteed \cite{hu2023toward}. Therefore, in the next section, we introduce an alternative policy parametrization that leads to a favorable optimization landscape.

\section{Preliminary}

\subsection{IOH Representation of the System $\rms {\bm \Sigma}$}
\begin{definition}\label{def_1}
Let $\{u,y\}$ be the input-output signal of $\rms {\bm \Sigma}$ in \eqref{hat Sigma}. Given $L \in \mathbb N$, we refer to
\begin{equation}\label{def_IOH}
    z(t) := [([u]^{t-1}_{t-L})^{\top}, ([y]^{t-1}_{t-L})^{\top}]^{\top} \in \mathbb R^{n_z}, \quad t \geq L
\end{equation}
with $n_z := L(n_u + n_y)$ as the {\it input-output history of length $L$} or simply the {\it IOH}. 
Similarly, we refer to
\begin{equation}
    e(t) := [([w]^{t-1}_{t-L})^{\top}, ([v]^{t-1}_{t-L})^{\top}]^{\top} \in \mathbb R^{n_e}, \quad t \geq L
\end{equation}
with $n_e := L(n_w + n_v)$ as the noise history, and 
\begin{equation}\label{def_fullH}
    h(t) := [z^{\top}(t), e^{\top}(t)]^{\top} \in \mathbb R^{n_h}, \quad t \geq L
\end{equation}
with $n_h := n_z + n_e$ as the full history. 
\end{definition}

\begin{definition}\label{def_Lmeas} \cite{rugh1996linear}
An $n_{\eta}$-dimensional system $\eta(t+1) = A_{\eta}\eta(t) + B_{\eta}u(t), y(t)=C_{\eta}\eta(t)$ is said to be \it $L$-step observable if $\rank \mathcal O_L(A_{\eta},C_{\eta})=n_{\eta}$.
\end{definition}
The existence of $L \in \mathbb Z$ such that $\rms{\bm \Sigma}$ is $L$-step observable is equivalent to that $\rms{\bm \Sigma}$ is observable.

\begin{lem}\label{sys_lem}
Suppose the system $\rms {\bm \Sigma}$ in \eqref{hat Sigma} is $L$-step observable, and let $z$ be the IOH of length $L$ defined by \eqref{def_IOH}, and $h$ be the full history defined by \eqref{def_fullH}. Consider a new system ${\bm \Sigma}$ defined by 
\begin{equation}\label{def_sigma}
\hspace{-2mm} {\bm \Sigma} :  \left\{ \H
\begin{array}{rcl}
    h(t+1) &\HH = &\HH \Theta h(t) + \Pi_d d(t) + \Pi_u u(t)\\   
    y(t)  &\HH = &\HH \Psi h(t) + \Upsilon d(t)\\  
    z(t)  &\HH = &\HH \Gamma h(t) \\
\end{array}\HH \right. ,~ t \geq L
\end{equation}
where $d$ is defined in \eqref{def_d} and 
\begin{align}
    &\Theta \h := \h \left[
    \begin{array}{cccc}
     J_{n_u} & & & \\
     & J_{n_y} & & \\
     & & J_{n_w} & \\
     & & & J_{n_v}
    \end{array}
     \right] + 
    \left[
     \begin{array}{c}
    0  \\
    E_{n_y}C[M_1~M_2~M_3~M_4] \\
    0 \\
    0  
 \end{array}
 \right] \nonumber \\
& \Pi_d \h :=\h \left[
    \H \begin{array}{cc}
    0 \H & 0 \\
    0 \H & E_{n_y} \\
    E_{n_w} \H & 0 \\
    0 \H & E_{n_v} 
    \end{array}
 \H \right], \quad \Pi_u \h :=\h  \left[
    \H \begin{array}{c}
    E_{n_u}  \\
    0 \\
    0 \\
    0  
 \end{array}
\H \right] \nonumber \\
& J_n \h :=  \left[
    \begin{array}{cccc}
     0_{n\times n} & I_n & & \\
     &\HHH \ddots &\HHH \ddots &\HHH \\
    \hH &\HHH &\HHH  &\HHH I_n \\
    \hH &\HHH &\HHH &\HHH 0_{n \times n}
    \end{array}
     \right]  \in \h \mathbb R^{Ln \times Ln}, E_n \h :=  \left[
     \begin{array}{c}
    0_{n\times n}  \\
    \vdots  \\
    0_{n\times n} \\
    I_n  
 \end{array}
 \right]  \in \h \mathbb R^{Ln \times n} \nonumber \\
&\Psi \h :=\h C[M_1, M_2, M_3, M_4],~\Gamma \h:=\h [I_{n_z}, 0],~\Upsilon \h:=\h [0, I_{n_y}] \nonumber \\
&M_1 \h := \h {\mathcal R}_L(A,B)-A^L{\mathcal O}_L^{\dagger}(A,C){\mathcal H}_L(A,B,C) \nonumber \\
&M_2 \h := \h A^L{\mathcal O}_L^{\dagger}(A,C) \nonumber \\
&M_3 \h := \h {\mathcal R}_L(A,I)-A^L{\mathcal O}_L^{\dagger}(A,C){\mathcal H}_L(A,I,C) \nonumber \\
&M_4 \h := \h -A^L{\mathcal O}_L^{\dagger}(A,C).  \nonumber
\end{align}
If the initial state $h(L)$ of \eqref{def_sigma} coincides with the history of $\{u, y, w, v\}$ for $t \in \{0,\ldots, L-1\}$, then $\rms {\bm \Sigma}$ and ${\bm \Sigma}$ are equivalent in the sense that for any input sequences $\{u, w, v\}$ the output $y$ generated by \eqref{hat Sigma} and \eqref{def_sigma} coincide for each $t \geq L$. 
\end{lem}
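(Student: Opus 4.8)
The plan is to prove the equivalence in two stages: first establish a closed‑form \emph{state‑reconstruction formula} expressing the current state $x(t)$ of $\rms{\bm\Sigma}$ as a linear function of the full history $h(t)$, and then verify, equation by equation, that ${\bm\Sigma}$ in \eqref{def_sigma} reproduces the output and propagates the history correctly.

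First I would derive the reconstruction formula. Iterating \eqref{hat Sigma} over the window $[t-L,t-1]$ and stacking the outputs gives the standard relation
\[
[y]_{t-L}^{t-1} = \mathcal{O}_L(A,C)\,x(t-L) + \mathcal{H}_L(A,B,C)[u]_{t-L}^{t-1} + \mathcal{H}_L(A,I,C)[w]_{t-L}^{t-1} + [v]_{t-L}^{t-1},
\]
while one‑step propagation of the state dynamics yields
\[
x(t) = A^L x(t-L) + \mathcal{R}_L(A,B)[u]_{t-L}^{t-1} + \mathcal{R}_L(A,I)[w]_{t-L}^{t-1}.
\]
Because $\rms{\bm\Sigma}$ is $L$-step observable, $\mathcal{O}_L(A,C)$ has full column rank, so $\mathcal{O}_L^{\dagger}(A,C)\mathcal{O}_L(A,C)=I$. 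Solving the first relation for $x(t-L)$ and substituting into the second, the coefficients of $[u]_{t-L}^{t-1}$, $[y]_{t-L}^{t-1}$, $[w]_{t-L}^{t-1}$, $[v]_{t-L}^{t-1}$ collapse exactly to $M_1$, $M_2$, $M_3$, $M_4$, giving the key identity $x(t) = [M_1,M_2,M_3,M_4]\,h(t)$, valid whenever the entries of $h(t)$ are the genuine signals of $\rms{\bm\Sigma}$ over the window.

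Next I would verify the three equations of ${\bm\Sigma}$. The output equation is immediate: $y(t)=Cx(t)+v(t)=C[M_1,M_2,M_3,M_4]h(t)+\Upsilon d(t)=\Psi h(t)+\Upsilon d(t)$, and the readout $z(t)=\Gamma h(t)$ is a tautology since $z$ consists of the first $n_z$ entries of $h$. For the update I would check the four blocks of $h(t+1)=[[u]_{t-L+1}^{t},[y]_{t-L+1}^{t},[w]_{t-L+1}^{t},[v]_{t-L+1}^{t}]$ separately. The $u$-, $w$-, and $v$-blocks follow the block‑shift $J_n$ (which drops the oldest sample and vacates the last slot) followed by injecting the genuine new sample through $E_n$, matching the $J_{n_u}$, $J_{n_w}$, $J_{n_v}$ rows of $\Theta$ together with $\Pi_u$ and $\Pi_d$. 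The only nonroutine block is the $y$-block: appending $y(t)$ requires expressing it via the reconstruction formula as $y(t)=C[M_1,M_2,M_3,M_4]h(t)+v(t)$, so the appended term splits into $E_{n_y}C[M_1,M_2,M_3,M_4]h(t)$—precisely the off‑diagonal coupling in $\Theta$—plus $E_{n_y}v(t)$, which is exactly the $y$-row of $\Pi_d d(t)$.

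Finally I would close the argument by induction on $t\ge L$: the hypothesis that $h(L)$ equals the true history furnishes the base case, and assuming $h(t)$ equals the true history, the verified output equation produces the true $y(t)$ while the verified shift structure produces $h(t+1)$ equal to the true history at $t+1$, so the outputs of the two representations coincide for all $t\ge L$. I expect the main obstacle to be the $y$-block of the update, since it is the single place where the dynamics are not a pure shift; making it consistent is exactly what forces the term $E_{n_y}C[M_1,M_2,M_3,M_4]$ in $\Theta$, and the bulk of the verification is confirming that the coefficients assembled from the observability inversion reproduce this coupling.
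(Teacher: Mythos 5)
Your proposal is correct and follows essentially the same route as the paper's proof: both derive the batch relations over the window $[t-L,\,t-1]$, invert via $\mathcal{O}_L^{\dagger}(A,C)$ using $L$-step observability to obtain the reconstruction $x(t)=[M_1,M_2,M_3,M_4]\,h(t)$, and then verify the output equation and the shift-plus-injection structure of the history update, with the $y$-block coupling $E_{n_y}C[M_1,M_2,M_3,M_4]$ arising exactly as you describe. Your explicit closing induction on $t\ge L$ merely makes precise what the paper leaves implicit in its assumption that $h(L)$ matches the true history.
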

\begin{proof}  
See \ref{prf_sys_lem}. 
\end{proof}
The IOH representation \eqref{def_sigma} has an important property in that the IOH vector $z$ (the first $n_z$ component of the full history $h$ of the plant $\rms{\bm \Sigma}$) can be directly measured by the controller. 

\subsection{IOH Representation of the Controller $\rmsK {\bm K}$}

\begin{lem}\label{lem2}
Given $L \in \mathbb N$, consider  
\begin{equation}\label{IOH_law}
    {\bm K}:~u(t) = Kz(t), \quad t \geq L, 
\end{equation}
where $z(t)$ is defined in \eqref{def_IOH} and $K=[K^u, K^y] \in \mathbb R^{n_u \times n_z}$ is partitioned as
\begin{align}
K^u = [K^u_L, \ldots, K^u_1], \; K^y=[K^y_L, \ldots, K^y_1] \label{Kiuy}     
\end{align}
with $K^u_i \in \mathbb R^{n_u \times n_u}$ and $K^y_i \in \mathbb R^{n_u \times n_y}$. Then, the following two statements hold. 
\begin{enumerate}
    \item[a)]  Suppose the dynamic controller $\rmsK {\bm K}$ in \eqref{dyn_K} is $L$-step observable. Then \eqref{dyn_K} and \eqref{IOH_law} are equivalent if 
    \begin{align}
        &\HHH \HHH K^u := FG^L\mathcal O_L^{\dagger}(G,F), \label{defKu_h} \\
        &\HHH \HHH K^y := F{\mathcal R}_L(G,H)-FG^L \mathcal O_L^{\dagger}(G,F){\mathcal H}_L(G,H,F). \label{defKy_h}
    \end{align}
    \item[b)]  Conversely, given \eqref{IOH_law}, construct a dynamic controller $\rmsK {\bm K}$ in \eqref{dyn_K} with \footnote{If $L = 1$, then $G = K_1^u$, $H = K_1^y$, $F = I$ and $\xi(0) = u(0)$. }
    \begin{align}\label{ABCD_hat}
     G \hspace{-0.5mm}=\hspace{-1mm} \left[
    \begin{array}{cccc}
    & & & K^u_L \\
    I  &  & & K^u_{L-1} \\
    & \ddots &  & \vdots \\
    & & I & K^u_1
    \end{array}
    \right],~H \hspace{-0.5mm}=\hspace{-1mm} \left[\hspace{-1.5mm}
    \begin{array}{c}
        K^y_L \\ K^y_{L-1} \\ \vdots \\ K^y_1
    \end{array}\hspace{-1.5mm}
    \right],~F \hspace{-0.5mm}=\hspace{-1mm} \left[\hspace{-1.0mm}
    \begin{array}{c}
    0 \\ \vdots \\ 0 \\ I
    \end{array}
    \hspace{-1.0mm}\right]^{\top}\hspace{-1mm}
\end{align}
and $\xi(0) = \mathcal O_L^{-1}(G, F)[I_{Ln_u}, -\mathcal H_L(G, H, F)] z(L)$, where $\mathcal O_L(G, F)$ is always invertible. Then, \eqref{dyn_K} and \eqref{IOH_law} are equivalent. 
\end{enumerate}
\end{lem}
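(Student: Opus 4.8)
The plan is to prove both directions by explicitly relating the internal state $\xi$ of the dynamic controller \eqref{dyn_K} to the measurable IOH vector $z$ over a sliding window of length $L$.

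\textbf{Part a).} First I would propagate the controller over $L$ steps. Iterating $\xi(t+1)=G\xi(t)+Hy(t)$ together with $u=F\xi$ gives, for $k=0,\ldots,L-1$,
\[ u(t-L+k)=FG^{k}\xi(t-L)+\sum_{j=0}^{k-1}FG^{k-1-j}Hy(t-L+j), \]
which, stacked over $k$, is the compact identity
\[ [u]_{t-L}^{t-1}=\mathcal O_L(G,F)\,\xi(t-L)+\mathcal H_L(G,H,F)\,[y]_{t-L}^{t-1}. \]
Since $\rmsK{\bm K}$ is $L$-step observable, $\mathcal O_L(G,F)$ has full column rank, so $\mathcal O_L^{\dagger}(G,F)\mathcal O_L(G,F)=I$ and I can solve for the state as $\xi(t-L)=\mathcal O_L^{\dagger}(G,F)\big([u]_{t-L}^{t-1}-\mathcal H_L(G,H,F)[y]_{t-L}^{t-1}\big)$. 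A second $L$-step propagation yields $\xi(t)=G^{L}\xi(t-L)+\mathcal R_L(G,H)[y]_{t-L}^{t-1}$; substituting the reconstructed $\xi(t-L)$ and left-multiplying by $F$ expresses $u(t)=F\xi(t)$ as a linear map of $[u]_{t-L}^{t-1}$ and $[y]_{t-L}^{t-1}$ whose coefficient blocks are exactly \eqref{defKu_h} and \eqref{defKy_h}. This is the IOH law \eqref{IOH_law}, proving equivalence for all $t\ge L$.

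\textbf{Part b).} Here I would show that the companion realization \eqref{ABCD_hat} is $L$-step observable and reproduces the prescribed gain. Using $F=[0,\ldots,0,I]$, a direct computation shows that $FG^{k}$ extracts the $(L-k)$-th block of $\xi$ up to contributions from higher-index blocks; hence $\mathcal O_L(G,F)$ is square and block anti-triangular with identity blocks on the anti-diagonal, so it is always invertible, justifying both the use of $\mathcal O_L^{-1}(G,F)$ and the $L$-step observability claim. I would then prove by induction on the block index $m$ the invariant
\[ \xi_{L-m}(t)=\sum_{i=m+1}^{L}\big(K^{u}_{i}\,u(t+m-i)+K^{y}_{i}\,y(t+m-i)\big),\quad m=0,\ldots,L-1, \]
where $\xi_{L-m}$ denotes the $(L-m)$-th block of $\xi$; its bottom block ($m=0$) reads $u(t)=F\xi(t)=\xi_L(t)=Kz(t)$ via the partition \eqref{Kiuy}, which is precisely \eqref{IOH_law}. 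The initial condition is then forced by requiring the part-a identity to hold at $t=L$: evaluating $[u]_{0}^{L-1}=\mathcal O_L(G,F)\xi(0)+\mathcal H_L(G,H,F)[y]_{0}^{L-1}$ and substituting $[u]_{0}^{L-1}=[I,0]z(L)$, $[y]_{0}^{L-1}=[0,I]z(L)$ gives $\mathcal O_L(G,F)\xi(0)=[I,-\mathcal H_L(G,H,F)]z(L)$, which inverts to the stated $\xi(0)$. Alternatively, one may substitute \eqref{ABCD_hat} into \eqref{defKu_h}--\eqref{defKy_h}, check that the gains are recovered, and invoke part a).

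\textbf{Main obstacle.} The delicate step is the bookkeeping in part b): aligning the block ordering of the companion form \eqref{ABCD_hat} with the index convention of $K^{u}_{i},K^{y}_{i}$ in \eqref{Kiuy}, and verifying that the inductive invariant closes up at the top block ($m=L-1$), where the shift chain of $G$ terminates into the $K^{u}_{L},K^{y}_{L}$ feed. One must also confirm that the data appearing in the invariant at $t=L$ uses only samples in $\{0,\ldots,L-1\}$, so that $\xi(0)$ is well-defined from $z(L)$ alone. Throughout, the $L$-step observability hypothesis and the resulting full-rank property of $\mathcal O_L$ are what make the state reconstruction in part a), and hence the whole equivalence, exact rather than merely formal.
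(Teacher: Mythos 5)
Your proposal is correct and takes essentially the same route as the paper: the paper defers its proof to Lemma~2 of \cite{sadamoto2024policy}, whose argument is precisely your two-step construction---stacking the controller over an $L$-step window to get $[u]_{t-L}^{t-1}=\mathcal O_L(G,F)\xi(t-L)+\mathcal H_L(G,H,F)[y]_{t-L}^{t-1}$, reconstructing $\xi(t-L)$ via $\mathcal O_L^{\dagger}(G,F)$, and propagating forward to obtain \eqref{defKu_h}--\eqref{defKy_h}---mirroring the paper's own Appendix~A proof of Lemma~\ref{sys_lem} for the plant. Your part~b) details (the anti-triangular invertibility of $\mathcal O_L(G,F)$ for the companion form \eqref{ABCD_hat}, the inductive invariant on the blocks of $\xi$, and fixing $\xi(0)$ from the stacked identity at $t=L$) are all sound and consistent with the cited argument.
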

\begin{proof} The proof is similar to Lemma 2 in \cite{sadamoto2024policy}.
\end{proof}

The triple $(G, H, F)$ in \eqref{ABCD_hat} is one realization based on the given ${\bm K}$ in \eqref{IOH_law}. Note that there exists a degree of freedom in choosing this triple, up to a coordinate transformation. This lemma implies that any dynamic controller of form \eqref{dyn_K} can be equivalently written as
\begin{align}
    u(t) = [K^u,~K^y,~0,~0]h(t) \label{structe}
\end{align}
with appropriate matrices $K^u$ and $K^y$. In other words, these matrices can be used to parameterize the class of dynamic control policies in the form \eqref{dyn_K}. Such a parametrization is model-free in the sense that reconstructing $\rmsK {\bm K}$ from $K$ does not require the knowledge of the plant model $(A,B,C)$. Therefore, we consider the following design strategy: 
\begin{enumerate}
    \item[a)] Find a desirable $K$ in \eqref{IOH_law}. 
    \item[b)] Transform the designed $K$ into $\rmsK {\bm K}$ using \eqref{dyn_K} and \eqref{ABCD_hat}. 
\end{enumerate}
Hereafter, we focus on design problem a), which is formulated in the next subsection. 

\subsection{IOH Representation of LQG problem}\label{Sec3C}
To reformulate the original LQG control problem as an equivalent control problem in terms of $K$ in \eqref{IOH_law},
we introduce an IOH representation of $\rmsJ J$ in \eqref{defJ_orig}. Given $L \in \mathbb N$ such that $\rms {\bm \Sigma}$ in \eqref{hat Sigma} is $L$-step observable, we define
\begin{equation}
    \label{cost}
     J(K) \h :=\h \lim_{T \rightarrow \infty}\h {\mathbb E}\h
    \left[\frac{1}{T} 
    \sum_{t=L}^{T}y^{\top}(t)Qy(t) + u^{\top}(t)Ru(t) \right]\H, 
\end{equation}
where $y(t)$ and $u(t)$ follow the dynamics of the closed-loop system $({\bm \Sigma}, {\bm K})$. 
From Lemmas \ref{sys_lem}–\ref{lem2}, it is evident that the closed-loop systems $(\rms {\bm \Sigma}, \rmsK {\bm K})$ and $({\bm \Sigma}, {\bm K})$ are equivalent. Therefore, we obtain the following: 
\begin{lem}\label{prp1}
Suppose $\rms {\bm \Sigma}$ is $L$-step observable. If $\rmsK {\bm K}$ in \eqref{dyn_K} and ${\bm K}$ in \eqref{IOH_law} are related by \eqref{ABCD_hat}, then
\begin{align}\label{equi_cost}
    \rmsJ J(\rmsK {\bm K}) = J(K). 
\end{align}
\end{lem}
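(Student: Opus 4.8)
The plan is to exploit the equivalence of the closed-loop systems $(\rms{\bm\Sigma},\rmsK{\bm K})$ and $({\bm\Sigma},{\bm K})$ already established in Lemmas~\ref{sys_lem} and~\ref{lem2}, and then argue that the two cost functionals in \eqref{defJ_orig} and \eqref{cost} compute the same quantity up to transient effects that vanish in the Cesàro (time-averaged) limit. First I would fix an arbitrary noise realization $\{w,v\}$ and the corresponding input-output trajectory $\{u,y\}$ generated by the original closed loop \eqref{hat Sigma}--\eqref{dyn_K}. By Lemma~\ref{sys_lem}, if the initial full history $h(L)$ of ${\bm\Sigma}$ is set to the history of $\{u,y,w,v\}$ over $t\in\{0,\dots,L-1\}$, the output $y(t)$ produced by ${\bm\Sigma}$ coincides with that of $\rms{\bm\Sigma}$ for every $t\ge L$. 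By Lemma~\ref{lem2}, under the correspondence \eqref{ABCD_hat} the control law $u(t)=Kz(t)$ reproduces exactly the input generated by $\rmsK{\bm K}$ for $t\ge L$, provided the initial controller state $\xi(0)$ is chosen as prescribed there. Hence, along the same noise realization, the signals $\{u(t),y(t)\}_{t\ge L}$ agree pathwise between the two closed loops.

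\emph{Second}, I would compare the two cost expressions directly. The original cost \eqref{defJ_orig} averages $y^\top Q y+u^\top R u$ over $t\in\{0,\dots,T\}$, whereas the IOH cost \eqref{cost} averages the same stage cost over $t\in\{L,\dots,T\}$. Since the summands agree pathwise for $t\ge L$, the two finite sums differ only by the finitely many terms indexed by $t\in\{0,\dots,L-1\}$. These initial terms contribute a fixed, $T$-independent amount whose expectation is finite (the controller is assumed stabilizing, so the stage costs have bounded mean), and therefore their contribution to $\tfrac1T\sum$ vanishes as $T\to\infty$. Taking expectations and the limit $T\to\infty$ then yields $\rmsJ J(\rmsK{\bm K})=J(K)$.

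\textbf{Main obstacle.} The delicate point is the treatment of the initial condition. The pathwise equivalence in Lemma~\ref{sys_lem} requires $h(L)$ to match the \emph{actual} history of the original system over $\{0,\dots,L-1\}$, and Lemma~\ref{lem2} requires $\xi(0)$ to be set consistently. For a fair comparison of the two \emph{stationary} (infinite-horizon) costs, I must argue that this prescribed initialization is immaterial to the limit: because both closed loops are Schur-stable (the stabilizing assumption underlying finiteness of $\rmsJ J$, guaranteed via $Q>0$ as noted after \eqref{defJ_orig}), any mismatch in the initial state decays geometrically and contributes nothing to the time-averaged cost. I would make this precise by writing the stage cost as a stationary component plus a transient term driven by the initial-state error, bounding the transient by $C\rho^t$ for some $\rho<1$, and noting $\tfrac1T\sum_{t} C\rho^t\to 0$. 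The remaining steps—interchanging expectation with the finite sum and invoking the equivalence of closed-loop realizations—are routine, so the only substantive work is confirming that the finite-window discrepancy and the initial-condition transient both wash out in the Cesàro limit.
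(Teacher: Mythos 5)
Your proposal is correct and follows essentially the same route as the paper's proof: pathwise equivalence of the two closed loops for $t \ge L$ via Lemmas~\ref{sys_lem}--\ref{lem2}, followed by the observation that the finitely many stage costs over $t \in \{0,\dots,L-1\}$ vanish in the Ces\`aro limit, so the two time-averaged costs coincide. The paper states this in two sentences; your additional worry about initial-condition transients is moot here, since with the initializations prescribed in Lemmas~\ref{sys_lem} and~\ref{lem2} the signal agreement for $t \ge L$ is exact rather than asymptotic.
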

\begin{proof}
See \ref{prf_lem3}. 
\end{proof}

In the following discussion, we introduce
\begin{align}
    \mathbb K := \{K~{\rm s.t.}~ \Theta_K := \Theta + \Pi_uK\Gamma~{\rm is~Schur~stable}\}, \label{def_thetaK}
\end{align}
and impose the following assumption:
\begin{assumption}\label{ass_noemp}
    The set $\mathbb K$ is not empty. 
\end{assumption}
Assumption \ref{ass_noemp} ensures that the system ${\bm \Sigma}$ in \eqref{def_sigma} is stabilizable by a partial-state feedback control \eqref{IOH_law}. Under this setting, we have the following theorem: 
\begin{theo}\label{opt_thm}
Suppose Assumptions \ref{ass_ABC}–\ref{ass_noemp} hold, and that $\rms {\bm \Sigma}$ in \eqref{hat Sigma} is $L$-step observable. Suppose also that the minimal realization of $\rmsK {\bm K}_{\rm LQG}$ defined by \eqref{dyn_LQG} is $L$-step observable. Define
\begin{equation}\label{defKstar}
 K_{\star} := {\rm argmin}_{K \in \mathbb K} J(K), 
\end{equation}
and let $\rmsK {\bm K}_{\star}$ be constructed from $K_{\star}$ using \eqref{dyn_K} and \eqref{ABCD_hat}. Then
\begin{equation}\label{opteq1}
J(K) \geq J(K_{\star}) = \rmsJ J(\rmsK {\bm K}_{\star}) = \rmsJ J(\rmsK {\bm K}_{\rm LQG})
\end{equation}
holds for any $K \in \mathbb R^{n_u \times n_z}$.
 Moreover when $n_u = 1$, the $L$-dimensional controller $\rmsK {\bm K}_{\star}$ is an optimal solution to \eqref{defJ_orig} in the form of \eqref{dyn_K}, i.e., 
\begin{align}
    \rmsK {\bm K}_{\star} = {\rm argmin}  \rmsJ J(\rmsK {\bm K})\quad {\rm s.t.}~ \rmsK {\bm K}~{\rm in}~(\ref{dyn_K})~{\rm with}~n_{\xi}=L. \label{opteq2}
\end{align}
\end{theo}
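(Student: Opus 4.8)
The plan is to establish the chain \eqref{opteq1} by combining the two directions of the controller equivalence in Lemma~\ref{lem2} with the classical global optimality of the LQG controller, and then to deduce the dimension statement \eqref{opteq2} by a simple parameter count. Throughout I use that, by Lemmas~\ref{sys_lem}--\ref{prp1}, the closed loops $(\rms{\bm \Sigma}, \rmsK {\bm K})$ and $({\bm \Sigma}, {\bm K})$ are equivalent and share the same time-average cost whenever $\rmsK {\bm K}$ and ${\bm K}$ are related through \eqref{ABCD_hat}.

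First I would prove the lower bound $J(K) \ge \rmsJ J(\rmsK {\bm K}_{\rm LQG})$ for every $K \in \mathbb K$. Given such a $K$, Lemma~\ref{lem2}b) constructs a dynamic controller $\rmsK {\bm K}$ of the form \eqref{dyn_K} equivalent to ${\bm K}$, and Lemma~\ref{prp1} gives $J(K) = \rmsJ J(\rmsK {\bm K})$; since $\rmsK {\bm K}_{\rm LQG}$ is globally optimal for the LQG problem over all controllers of the form \eqref{dyn_K}, we obtain $J(K) = \rmsJ J(\rmsK {\bm K}) \ge \rmsJ J(\rmsK {\bm K}_{\rm LQG})$. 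To see that this bound is attained, I would apply Lemma~\ref{lem2}a) to the minimal realization of $\rmsK {\bm K}_{\rm LQG}$, which is $L$-step observable by hypothesis: this yields an IOH gain $K_{\rm LQG}$ (with blocks given by \eqref{defKu_h}--\eqref{defKy_h}) equivalent to $\rmsK {\bm K}_{\rm LQG}$, whence $J(K_{\rm LQG}) = \rmsJ J(\rmsK {\bm K}_{\rm LQG})$ by the cost equivalence underlying Lemma~\ref{prp1}. Because $\rmsK {\bm K}_{\rm LQG}$ is stabilizing, the closed-loop equivalence forces $\Theta_{K_{\rm LQG}} = \Theta + \Pi_u K_{\rm LQG}\Gamma$ to be Schur, so $K_{\rm LQG} \in \mathbb K$. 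Hence $K_{\rm LQG}$ attains $\min_{K \in \mathbb K} J(K)$ (in particular $K_\star$ is well defined), so $J(K_\star) = \rmsJ J(\rmsK {\bm K}_{\rm LQG})$, and a final use of Lemma~\ref{prp1} on the pair $(K_\star, \rmsK {\bm K}_\star)$ gives $\rmsJ J(\rmsK {\bm K}_\star) = J(K_\star)$; this yields the two equalities in \eqref{opteq1}.

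It then remains to upgrade the inequality in \eqref{opteq1} from $K \in \mathbb K$ to arbitrary $K \in \mathbb R^{n_u \times n_z}$. For $K \in \mathbb K$ it is immediate from the definition of $K_\star$. For $K \notin \mathbb K$, i.e. when $\Theta_K$ in \eqref{def_thetaK} has spectral radius at least one, I would argue $J(K) = +\infty \ge J(K_\star)$: since the noise $d$ is non-degenerate ($V_v > 0$) it persistently excites the unstable modes of $h$, and because $Q > 0$ and $R > 0$ the expected running cost $y^{\top} Q y + u^{\top} R u$ cannot remain bounded, so its time-average diverges. This is precisely the cost--stability equivalence invoked in Remark~\ref{rem_Qdefinite}, and I expect it to be the main obstacle: one must show that no unstable mode of $\Theta_K$ can be simultaneously invisible to the output $y = \Psi h + \Upsilon d$ and to the input $u = K\Gamma h$, which is exactly where the positive definiteness of $Q$, together with the structure of $\Psi$ and $\Gamma$, is essential.

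Finally, for \eqref{opteq2} I would count dimensions. The reconstruction \eqref{ABCD_hat} of $\rmsK {\bm K}_\star$ from $K_\star$ has state dimension $n_\xi = L n_u$, which equals $L$ when $n_u = 1$; thus $\rmsK {\bm K}_\star$ is itself a controller of the form \eqref{dyn_K} with $n_\xi = L$. By the equality $\rmsJ J(\rmsK {\bm K}_\star) = \rmsJ J(\rmsK {\bm K}_{\rm LQG})$ already established, it attains the globally optimal LQG cost, which lower-bounds the cost of every dynamic controller and in particular every controller with $n_\xi = L$. Since $\rmsK {\bm K}_\star$ lies in this restricted class and meets the bound, it is a minimizer over all $L$-dimensional controllers, establishing \eqref{opteq2}.
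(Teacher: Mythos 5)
Your argument for the two equalities in \eqref{opteq1} matches the paper's proof (apply Lemma~\ref{lem2}~a) to the minimal realization of $\rmsK{\bm K}_{\rm LQG}$, invoke Lemma~\ref{prp1}, then use Lemma~\ref{prp1} once more on $(K_\star,\rmsK{\bm K}_\star)$; like the paper, you leave the membership $K_{\rm LQG}\in\mathbb K$ at the level of an assertion). However, your treatment of the case $K\notin\mathbb K$ contains a genuine error: the claim that $J(K)=+\infty$ whenever $\Theta_K$ is unstable is false, because the noise $d$ need not excite the unstable modes of $\Theta_K$. Concretely, for a Schur-stable plant take $K^y=0$ and $K^u$ with an eigenvalue outside the unit disk: then $H=0$ in \eqref{ABCD_hat}, the input-history block of $\Theta_K$ evolves autonomously and is never reached by $d$, so with zero input initialization $u\equiv 0$ and $J(K)$ is finite even though $K\notin\mathbb K$. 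This is precisely the non-coercivity phenomenon of Section~\ref{sec_non_coer}, and it is why the paper establishes the cost--stability equivalence only for the relaxed cost $J^{\epsilon}$ (Lemma~\ref{lem_stab}), whose necessity direction requires the artificial input dither $\delta_u$, Assumption~\ref{ass_noemp}, and the feedback-invariance of reachability \eqref{image} to rule out unstable unreachable modes; Remark~\ref{rem_Qdefinite}, which you cite, concerns $J_u^{\epsilon}$, not $J$. The repair is immediate and is in fact the paper's route: Lemma~\ref{lem2}~b) and Lemma~\ref{prp1} require no stability hypothesis, so for \emph{every} $K\in\mathbb R^{n_u\times n_z}$ the reconstructed controller satisfies $J(K)=\rmsJ J(\rmsK{\bm K})\ge \rmsJ J(\rmsK{\bm K}_{\rm LQG})$, with both sides possibly infinite. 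Restricting your first step to $K\in\mathbb K$ and then attempting a separate "upgrade" was an unnecessary detour that created the gap.

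For \eqref{opteq2} you take a genuinely different and shorter route than the paper: since $n_\xi=Ln_u=L$ when $n_u=1$, the controller $\rmsK{\bm K}_\star$ lies in the $L$-dimensional class and, by \eqref{opteq1}, already attains the unconstrained global optimum $\rmsJ J(\rmsK{\bm K}_{\rm LQG})$, which lower-bounds every controller in that class. This is valid under the theorem's hypotheses. The paper instead argues intrinsically: any observable single-output $L$-dimensional controller is automatically $L$-step observable, hence Lemma~\ref{lem2}~a) produces an equivalent IOH gain, giving $J(K_\star)\le \rmsJ J(\rmsK{\bm K}_\star^{L})$, while Lemma~\ref{lem2}~b) gives the reverse inequality. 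The paper's version buys something your shortcut does not: it never uses \eqref{opteq1} or the $L$-step observability of $\rmsK{\bm K}_{\rm LQG}$, so the optimality of $\rmsK{\bm K}_\star$ within the $L$-dimensional class is obtained independently of whether the full-order LQG controller is representable at that history length --- the regime relevant to reduced-order design (cf.\ the $L=2$ experiment).
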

\begin{proof} 
See \ref{prf_thm1}. 
\end{proof}

In Theorem~\ref{opt_thm}, the condition that the minimal realization of $\rmsK {\bm K}_{\rm LQG}$ is $L$-step observable ensures the existence of an IOH gain $K \in \mathbb R^{n_u \times n_z}$ equivalent to $\rmsK {\bm K}_{\rm LQG}$. Note that this does not require $\rmsK {\bm K}_{\rm LQG}$ itself to be minimal. Therefore, Theorem 1 remains valid even when the LQG controller is non-minimal (i.e., the results of \cite{tang2021analysis,zheng2023benign} are not applicable). 

As \eqref{structe} is a {\it structured state-feedback policy} in the sense that the last two matrix gains in \eqref{structe} are forced to be zero, it is difficult to show that $J(K)$ is gradient dominant \cite{duan2023optimization}. However, in the numerical simulations presented later, the PGM initialized from 20 different IOH gains all converges to the LQG controller. Whether $J$ as a function of IOH gains possesses benign properties such as gradient dominance remains an open problem.

As a first step towards the convergence analysis of PGMs for the LQG problem, we next propose a vanilla PGM with a theoretical guarantee of convergence to $\mathcal O(\epsilon)$-stationary points of $J$. More specifically, we consider a relaxed version of the LQG problem, where small process noise is added to the IOH dynamics in \eqref{def_sigma}. We then show that the PGM converges to stationary points of this relaxed problem. As the covariance of the additional noise decreases, the stationary points become closer to those of the original LQG cost, ensuring that the PGM converges to $\mathcal O(\epsilon)$-stationary points of $J$. To the best of our knowledge, this work is the first to analyze the convergence of PGMs for the LQG problem.

\begin{rem}
A part of the results presented in this section can be found in~\cite{al2022behavioral}, where Lemma~\ref{sys_lem}, Lemma~\ref{lem2}~(a), and the relation $J(K_{\star}) = \rmsJ J(\bm{K}_{\rm LQG})$ for the case $L = n_x$ were established as Lemmas 5.2--5.4. In contrast, this paper additionally establishes Lemma~\ref{lem2}~(b) and extends all relevant results to more general settings where $L \neq n_x$. This generalization enables the construction of low-dimensional controllers via the proposed PGM when $L < n_x$. Moreover, even when $L > n_x$, the theoretical guarantees including the subsequent convergence analysis remain valid. This flexibility is particularly important when extending the results to model-free PGMs, where the true system dimension is unknown due to the lack of model knowledge. In such cases, overestimating $L$ serves as a practical design guideline. Therefore, clearly establishing that the theoretical guarantees remain valid for $L > n_x$ is crucial for model-free PGMs. 
\end{rem}

\section{PGM for A Relaxed LQG Problem}
\subsection{Motivation for Relaxation}\label{sec_non_coer}
First of all, we show that $J(K)$ in \eqref{cost} is not coercive in general. To see this, let us consider a particular case where $\rms{\bm \Sigma}$ in \eqref{hat Sigma} is a stable, two-input, single-output system (i.e., $n_u = 2$ and $n_y = 1$). Furthermore, suppose that $\rms{\bm \Sigma}$ is $L$-step observable with $L = 3$. For this system, consider an IOH control law $K$ in \eqref{IOH_law} where $K^{y} = 0$ and
\begin{align}
    K_1^u = \begin{bmatrix}
        0.9 & a \\
        0 & 0.8
    \end{bmatrix}, \quad
    K_2^u = K_3^u = 0, \quad a \in \mathbb{R}.
\end{align}
Then, $H$ in \eqref{ABCD_hat} becomes $H = 0$. Thus, the closed-loop system $(\rms{\bm \Sigma}, \rmsK{\bm K})$, where $\rmsK{\bm K}$ is constructed using \eqref{dyn_K} and \eqref{ABCD_hat}, becomes a cascade system in which $\rmsK{\bm K}$ is located upstream and $\rms{\bm \Sigma}$ is located downstream. Therefore, the eigenvalues of the closed-loop system are those of $A$ and $G$. Here, the eigenvalues of $G$ in \eqref{ABCD_hat} are $\{0.9, 0.8, 0, 0, 0, 0\}$. Hence, $(\rms{\bm \Sigma}, \rmsK{\bm K})$ is stable regardless of the value of $a$. Since the transfer function of $\rmsK{\bm K}$ is zero for any $a$, the cost $\rmsJ J(\rmsK{\bm K})$ in \eqref{defJ_orig} remains unchanged with respect to $a$. Therefore, from Lemma~\ref{prp1}, $J(K)$ also remains unchanged with respect to $a$, which shows that $J(K)$ is not coercive in general. To enforce coerciveness, we consider a relaxed version of the LQG problem in the following section.

\subsection{Relaxation of the LQG Problem}
In the sequel, instead of ${\bm \Sigma}$ in \eqref{def_sigma}, we consider a hypothetical model in which the state of ${\bm \Sigma}$ is influenced not only by $d$ and $u$, but also by extra small process noise described as 
\begin{align}\label{perturb}
    \delta(t) \sim \mathcal{N}(0, {\epsilon} I), \quad 0 < {\epsilon} \ll 1. 
\end{align}
The resulting hypothetical model is described as follows: 
\begin{equation}\label{def_sigma_relaxed}
\hspace{-2mm} {\bm \Sigma}^{\epsilon} :  \left\{ \H
\begin{array}{rcl}
    h^{\epsilon}(t+1) &\HH = &\HH \Theta h^{\epsilon}(t) + \Pi_d d(t) + \Pi_u u(t) + \delta(t)\\   
    y^{\epsilon}(t)  &\HH = &\HH \Psi h^{\epsilon}(t) + \Upsilon d(t)\\  
    z^{\epsilon}(t)  &\HH = &\HH \Gamma h^{\epsilon}(t), \\
\end{array}\HH \right.
\end{equation}
where $t \geq L$. The system matrices such as $\Theta$ are the same as those defined in Lemma~\ref{sys_lem}. The process noise $\delta$ will play a role in the theoretical guarantee of the convergence, as shown later. Note that $z^{\epsilon}$ defined here does not satisfy \eqref{def_IOH}, i.e., $z^{\epsilon}(t) \ne [([u]^{t-1}_{t-L})^{\top}, ([y^{\epsilon}]^{t-1}_{t-L})^{\top}]^{\top}$. For this hypothetical model, we consider a control law with the same structure as \eqref{IOH_law}, i.e., 
\begin{equation}\label{IOH_law_relaxed}
    {\bm K}^{\epsilon}:~u(t) = Kz^{\epsilon}(t), \quad t \geq L.
\end{equation}
For the closed-loop system $({\bm \Sigma}^{\epsilon}, {\bm K}^{\epsilon})$, similarly to \eqref{cost}, we define a cost function defined as 
\begin{equation}
    \label{cost_relax}
    \hH J^{\epsilon}(K) \h :=\h \lim_{T \rightarrow \infty}\h {\mathbb E}\h
    \left[\frac{1}{T} 
    \sum_{t=L}^{T}y^{\epsilon\top}(t)Qy^{\epsilon}(t) + u^{\top}(t)Ru(t) \right]\H.
\end{equation}
The relationship between $J^{\epsilon}$ and the original LQG cost $J$ in \eqref{cost} is given as follows. For any $K \in \mathbb{K}$, where $\mathbb{K}$ is defined in \eqref{def_thetaK}, it follows that
\begin{align}\label{Jerr}
 J^{\epsilon}(K) =J(K) + \gamma_K{\epsilon}, \quad \gamma_K := \|\Omega_K ({\sf z}I - \Theta_K)^{-1}\|_{H_2}^2, 
\end{align}
where $\Omega_K := [\Psi^{\top}Q^{\frac{1}{2}}, \Gamma^{\top}K^{\top}R^{\frac{1}{2}}]^{\top}$ and $\Theta_K$ is defined in \eqref{def_thetaK}. The derivation of \eqref{Jerr} is shown in  \ref{prf_Jerr}. It follows from \eqref{Jerr} that if a stationary point of $J^{\epsilon}$ is obtained when $\epsilon \ll 1$, it can be expected to be a good approximation to a stationary point of $J$. To describe this precisely, we introduce the following definition.
\begin{definition}
 Consider $J$ in \eqref{cost}. We say that $\overline{K} \in \mathbb K$ is {\it $\mathcal O(\epsilon)$-stationary point} if 
 \begin{align}\label{def:eps_stationary}
        \|\nabla J(\overline{K})\|_F \leq \mathcal O({\epsilon}), \quad   \mathcal O(\epsilon) := \beta \epsilon
    \end{align}
    holds, where $\beta$ is constant parameter without depending on $\overline{K}$ and $\epsilon$. 
\end{definition}
In the remainder of this paper, we show that the gradient algorithm for $K$ introduced in the next section converges to one of $\mathcal O(\epsilon)$-stationary points.

\subsection{PGM}
For obtaining a stationary point of $J^{\epsilon}$ in \eqref{cost_relax}, we consider a vanilla PGM described as
\begin{equation}
\label{gd}
    K_{i+1} = K_{i}-\alpha\nabla J^{\epsilon}(K_i). 
\end{equation}
Here, $i \geq 0$ is the iteration number and $\alpha > 0$ is the step-size parameter. The gradient $\nabla J^{\epsilon}$ is expressed as follows.

\begin{lem} \label{lem_nablaJ}
Let ${\bm \Sigma}^{\epsilon}$ in \eqref{def_sigma_relaxed} be given. For each $K \in \mathbb K$, let $\Phi_K \geq 0$ and $Y_K > 0$ be the solutions to
\begin{align}
    &\Theta_K^{\top} \Phi_K \Theta_K - \Phi_K + \Psi^{\top}Q\Psi  + \Gamma^{\top}K^{\top}RK\Gamma = 0 \label{lyap} \\
    &\Theta_K Y_K \Theta^{\top}_K - Y_K + \Pi_d V_d \Pi_d^{\top} + {\epsilon}I = 0, \label{EKH2}
\end{align}
respectively. Define 
\begin{align}\label{EKH1}
 W_{K}:= (\Pi_u^{\top}\Phi_K\Pi_u + R)K\Gamma+\Pi_u^{\top}\Phi_K\Theta. 
\end{align}
Then, the gradient of $J^{\epsilon}(K)$ defined in \eqref{cost_relax} is given by
\begin{equation}\label{nablaJep}
\nabla J^{\epsilon}(K)=2W_K Y_{K}\Gamma^{\top}.  
\end{equation}
\end{lem}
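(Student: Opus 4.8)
The plan is to reduce the computation of $\nabla J^{\epsilon}$ to a standard static-gain LQR gradient calculation on the closed-loop history dynamics, and then to eliminate the implicit dependence of the steady-state covariance on $K$ via the dual Lyapunov equation \eqref{lyap}. Substituting \eqref{IOH_law_relaxed} into \eqref{def_sigma_relaxed} closes the loop as $h^{\epsilon}(t+1) = \Theta_K h^{\epsilon}(t) + \Pi_d d(t) + \delta(t)$ with $\Theta_K$ from \eqref{def_thetaK}, while $y^{\epsilon}(t) = \Psi h^{\epsilon}(t) + \Upsilon d(t)$ and $u(t) = K\Gamma h^{\epsilon}(t)$. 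Since $K \in \mathbb K$ makes $\Theta_K$ Schur stable, the process is asymptotically stationary and the time average in \eqref{cost_relax} equals the stationary expectation. Because $h^{\epsilon}(t)$ is a function of $\{d(s),\delta(s)\}_{s<t}$ only, it is independent of the current $d(t)$ and $\delta(t)$; hence all cross terms of the form $\mathbb E[h^{\epsilon}(t)\,d^{\top}(t)]$ vanish, and the steady-state covariance $Y_K := \lim_{t}\mathbb E[h^{\epsilon}(t)h^{\epsilon\top}(t)]$ satisfies exactly \eqref{EKH2}.

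First I would obtain a closed-form cost. Expanding the quadratic terms and using the independence above gives $J^{\epsilon}(K) = \tr\!\big((\Psi^{\top}Q\Psi + \Gamma^{\top}K^{\top}RK\Gamma)Y_K\big) + \tr(\Upsilon^{\top}Q\Upsilon V_d)$, where the second trace is a $K$-independent constant contributed by the direct feedthrough $\Upsilon d$ into $y^{\epsilon}$ and therefore drops out upon differentiation. Writing $\tilde Q_K := \Psi^{\top}Q\Psi + \Gamma^{\top}K^{\top}RK\Gamma$, the task reduces to differentiating $\tr(\tilde Q_K Y_K)$ in $K$, in which both factors depend on $K$.

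Next I would take the differential $dJ^{\epsilon} = \tr(d\tilde Q_K\, Y_K) + \tr(\tilde Q_K\, dY_K)$. The first term is explicit and, after using symmetry of $R$ and $Y_K$ together with the cyclic property of the trace, contributes $2RK\Gamma Y_K\Gamma^{\top}$ to the gradient. The second term carries the implicit dependence of $Y_K$ on $K$; here the key device is the adjoint identity for the Lyapunov operator $\mathcal L(X) := \Theta_K X \Theta_K^{\top} - X$, namely $\tr(\mathcal L^{*}(\Phi_K)\,dY_K) = \tr(\Phi_K\,\mathcal L(dY_K))$, where by \eqref{lyap} the observability-type Gramian $\Phi_K$ satisfies $\mathcal L^{*}(\Phi_K) = \Theta_K^{\top}\Phi_K\Theta_K - \Phi_K = -\tilde Q_K$. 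Differentiating \eqref{EKH2} shows $\mathcal L(dY_K) = -\big(d\Theta_K\, Y_K\Theta_K^{\top} + \Theta_K Y_K\, d\Theta_K^{\top}\big)$, and combining these two facts converts $\tr(\tilde Q_K\, dY_K)$ into $2\tr(\Phi_K\, d\Theta_K\, Y_K\Theta_K^{\top})$ by symmetry. Substituting $d\Theta_K = \Pi_u\, dK\,\Gamma$ and reading off the Frobenius gradient via $dJ^{\epsilon} = \tr(G^{\top}dK) \Rightarrow \nabla J^{\epsilon} = G$ yields $2\Pi_u^{\top}\Phi_K\Theta_K Y_K\Gamma^{\top}$ from this term; adding the first-term contribution and using $\Theta_K = \Theta + \Pi_u K\Gamma$ to regroup gives exactly $2\big[(\Pi_u^{\top}\Phi_K\Pi_u + R)K\Gamma + \Pi_u^{\top}\Phi_K\Theta\big]Y_K\Gamma^{\top} = 2W_K Y_K\Gamma^{\top}$, matching \eqref{EKH1}--\eqref{nablaJep}.

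The main obstacle I expect is handling the implicit dependence of $Y_K$ on $K$ cleanly; done naively this produces an awkward Lyapunov equation for $dY_K$ that is hard to collapse, and the payoff of introducing $\Phi_K$ through \eqref{lyap} is precisely that the adjoint identity reduces it to a single trace. Secondary care is needed to justify that $Y_K$ and $\Phi_K$ are the unique solutions of \eqref{EKH2} and \eqref{lyap} (guaranteed by $\Theta_K$ Schur for $K\in\mathbb K$), that differentiation commutes with the limit and expectation on the open stabilizing set $\mathbb K$, and that the feedthrough constant $\tr(\Upsilon^{\top}Q\Upsilon V_d)$ indeed plays no role in the gradient.
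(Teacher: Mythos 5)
Your proof is correct, and it is the mirror image (dual) of the paper's argument rather than a different idea. The paper starts from the observability-Gramian form $J^{\epsilon}(K)=\tr(\Phi_K V_h^{\epsilon})+{\rm c}$ with $V_h^{\epsilon}:=\Pi_d V_d\Pi_d^{\top}+\epsilon I$, so all $K$-dependence sits inside $\Phi_K$; it then perturbs the Lyapunov equation \eqref{lyap} to get an equation for $\Delta\Phi$ in which $W_K$ appears directly as the coefficient of $\Delta K$, pairs that equation with $Y_K$ under the trace, and uses \eqref{EKH2} to identify $\tr(\Delta\Phi V_h^{\epsilon})=J^{\epsilon}(K+\Delta K)-J^{\epsilon}(K)$. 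You instead start from the covariance form $J^{\epsilon}(K)=\tr(\tilde Q_K Y_K)+{\rm c}$ (the two representations agree, since $\tr(\Phi_K V_h^{\epsilon})=\tr(\tilde Q_K Y_K)$ by summing the series for either Gramian, and your constant $\tr(\Upsilon^{\top}Q\Upsilon V_d)$ equals the paper's $\tr(\Xi_d V_d\Xi_d^{\top})$), which splits the $K$-dependence between the explicit term $d\tilde Q_K$ and the implicit term $dY_K$; the adjoint identity $\tr(\mathcal L^{*}(\Phi_K)\,dY_K)=\tr(\Phi_K\,\mathcal L(dY_K))$ you invoke is exactly the same trace pairing of the two Lyapunov equations that the paper performs, just run in the transposed direction. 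The only practical difference is bookkeeping: in your route $W_K$ does not appear until the final regrouping via $\Theta_K=\Theta+\Pi_u K\Gamma$, whereas in the paper it emerges immediately from the perturbed equation; neither version is more general or more elementary, and both need the same (lightly treated) justification that higher-order terms are negligible and that $\Phi_K, Y_K$ exist uniquely for $K\in\mathbb K$.
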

\begin{proof}
See \ref{prf_lem_nablaJ}. 
\end{proof}

\begin{algorithm}[t]
\caption{PGM for the relaxed LQG problem}\label{alg1}
\begin{algorithmic}[1]
\STATE \textbf{Input:} $Q,R>0$ in \eqref{defJ_orig}; $L \in \mathbb N$ such that $\rms {\bm \Sigma}$ is $L$-step observable; $\alpha>0$; $\epsilon>0$; $K_0 \in \mathbb K$ where $\mathbb K$ is defined in \eqref{def_thetaK}
\FOR{$i = 0,1,2,\dots$ \textbf{until convergence}}
    \STATE Compute $\nabla J^{\varepsilon}(K_i)$ in \eqref{nablaJep}, and update $K_{i+1}$ by \eqref{gd}. 
\ENDFOR
\STATE \textbf{Output:} $\rmsK {\bm K}$ in \eqref{dyn_K} and \eqref{ABCD_hat}, where $K \leftarrow K_i$
\end{algorithmic}
\end{algorithm}

In this lemma, $Y_K$ is positive-definite due to the process noise \eqref{perturb}, which is crucial to guarantee the coerciveness of $J^{\epsilon}$. 
Algorithm \ref{alg1} summarizes the PGM we consider in this paper. Next, we analyze its convergence property.

\subsection{Convergence Analysis}

\begin{lem}\label{lem_stab}
 Assume $\rms {\bm \Sigma}$ is $L$-step observable. Then, $J^{\epsilon}(K)$ in \eqref{cost_relax} is bounded iff $K \in \mathbb K$, where $\mathbb K$ are defined in \eqref{def_thetaK}. 
\end{lem}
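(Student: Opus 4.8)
The plan is to reduce the boundedness of $J^{\epsilon}(K)$ to the boundedness of the closed-loop state covariance, and then to characterize the latter through a structural detectability property. Substituting $u(t)=K\Gamma h^{\epsilon}(t)$ into \eqref{def_sigma_relaxed} yields the autonomous closed loop $h^{\epsilon}(t+1)=\Theta_K h^{\epsilon}(t)+\Pi_d d(t)+\delta(t)$ with $\Theta_K=\Theta+\Pi_u K\Gamma$ as in \eqref{def_thetaK}, and output $y^{\epsilon}(t)=\Psi h^{\epsilon}(t)+\Upsilon d(t)$. Setting $P(t):=\mathbb E[h^{\epsilon}(t)h^{\epsilon\top}(t)]$ and using that $d(t)$ is independent of $h^{\epsilon}(t)$, I would first show that the instantaneous cost equals $\tr(\Omega_K^{\top}\Omega_K P(t))+\tr(Q\Upsilon V_d\Upsilon^{\top})$, where $\Omega_K^{\top}\Omega_K=\Psi^{\top}Q\Psi+\Gamma^{\top}K^{\top}RK\Gamma$, and that $P(t)$ obeys $P(t+1)=\Theta_K P(t)\Theta_K^{\top}+\mathcal W$ with $\mathcal W:=\Pi_d V_d\Pi_d^{\top}+\epsilon I\ge\epsilon I>0$. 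Thus $J^{\epsilon}(K)$ is bounded iff the Cesàro average of $\tr(\Omega_K^{\top}\Omega_K P(t))$ is finite. For the direction $K\in\mathbb K\Rightarrow$ bounded, Schur stability of $\Theta_K$ makes the recursion converge, $P(t)\to Y_K$ (the unique solution of \eqref{EKH2}), so the cost converges to $\tr(\Omega_K^{\top}\Omega_K Y_K)+\tr(Q\Upsilon V_d\Upsilon^{\top})<\infty$.

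The substantive direction is the contrapositive: if $\Theta_K$ is not Schur stable, then $J^{\epsilon}(K)=\infty$. Since $\mathcal W>0$ is full rank, dropping the (nonnegative) initial-condition term gives $P(t)\ge\sum_{k=0}^{t-L-1}\Theta_K^{k}\mathcal W(\Theta_K^{\top})^{k}$, whence $\tr(\Omega_K^{\top}\Omega_K P(t))\ge\sigma_{\min}(\mathcal W)\sum_{k=0}^{t-L-1}\|\Omega_K\Theta_K^{k}\|_F^2$ using $\tr(A\mathcal W A^{\top})\ge\sigma_{\min}(\mathcal W)\tr(AA^{\top})$. I would then show this sum diverges whenever $\Theta_K$ has an eigenvalue $\lambda$ with $|\lambda|\ge1$: if the associated eigenvector $v$ satisfies $\Omega_K v\ne0$, then $\|\Omega_K\Theta_K^{k}v\|=|\lambda|^{k}\|\Omega_K v\|\ge\|\Omega_K v\|>0$, so $\|\Omega_K\Theta_K^{k}\|_F$ does not decay, the partial sums grow linearly, and the Cesàro average blows up. Hence the only way $J^{\epsilon}$ could stay bounded is if some unstable eigenvector lies in $\ker\Omega_K$; ruling this out is the heart of the proof.

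The crux, which I expect to be the main obstacle, is to prove that $(\Theta_K,\Omega_K)$ is detectable for every $K$, i.e.\ no eigenvector $v$ of $\Theta_K$ with $|\lambda|\ge1$ satisfies $\Omega_K v=0$; here I would exploit the explicit block structure of \eqref{def_sigma}. Because $Q,R>0$, the condition $\Omega_K v=0$ is equivalent to $\Psi v=0$ and $K\Gamma v=0$, and the latter yields $\Theta_K v=\Theta v$, so $v$ is an eigenvector of the free dynamics $\Theta$. Writing $v=[v_u^{\top},v_y^{\top},v_w^{\top},v_v^{\top}]^{\top}$ and using that the $u$-, $w$-, and $v$-history blocks of $\Theta$ are the nilpotent shifts $J_{n_u},J_{n_w},J_{n_v}$, the assumption $\lambda\ne0$ forces $v_u=v_w=v_v=0$, leaving $v_y$ an eigenvector of the companion-type block $J_{n_y}+E_{n_y}CM_2$. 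The constraint $\Psi v=CM_2\,v_y=0$ then makes the last sub-block equation read $\lambda v_y^{(L)}=CM_2 v_y=0$, forcing $v_y^{(L)}=0$, and the shift recursion $v_y^{(i+1)}=\lambda v_y^{(i)}$ propagates this upward to give $v_y=0$, a contradiction with $v\ne0$. This step uses $Q>0$ essentially (cf.\ Remark~\ref{rem_Qdefinite}). Combining this detectability with the divergence estimate of the previous paragraph establishes the hard direction, and together with the easy direction proves the stated equivalence. \qed
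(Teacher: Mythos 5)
Your proof is correct, and it takes a genuinely different route from the paper's for the hard (necessity) direction. The paper never argues through detectability of $(\Theta_K,\Omega_K)$: instead it introduces an auxiliary input-dithered law $u(t)=Kz(t)+\delta_u(t)$ on the \emph{unrelaxed} IOH system, shows $J^{\epsilon}(K)<\infty \Rightarrow J_u^{\epsilon}(K)<\infty$, rewrites $J_u^{\epsilon}$ over length-$L$ windows as a Ces\`aro average of $z^{\top}Sz$ with $S=\mathrm{diag}(I_L\otimes R,\,I_L\otimes Q)>0$ (this is where $Q>0$ enters for the paper, and the identity requires the genuine IOH structure, which the relaxed state $z^{\epsilon}$ no longer satisfies), concludes $\mathbb E[\|h(T)\|^2]/T\to 0$, and finally converts this to Schur stability of $\Theta_K$ via invariance of the reachable subspace under partial-state feedback together with Assumption~3 ($\mathbb K\neq\emptyset$). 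You instead work directly with the relaxed covariance recursion $P(t+1)=\Theta_K P(t)\Theta_K^{\top}+\mathcal W$, exploit $\mathcal W\geq \epsilon I>0$ (full-rank excitation is exactly what the relaxation provides), and reduce everything to the structural fact that an eigenvector $v$ of $\Theta_K$ at $\lambda\neq 0$ with $\Omega_K v=0$ must vanish — and your block computation is right: $Q,R>0$ give $\Psi v=0$ and $K\Gamma v=0$, so $\Theta_K v=\Theta v$; nilpotency of $J_{n_u},J_{n_w},J_{n_v}$ with $\lambda\neq 0$ kills $v_u,v_w,v_v$; then $CM_2 v_y=0$ makes the last sub-block equation give $v_y^{(L)}=0$, and the shift recursion annihilates $v_y$. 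Your route is shorter and more elementary, avoids the detour through $J_u^{\epsilon}$ and the window lemma entirely, does not invoke Assumption~3 in the necessity direction, and even yields the stronger statement that all unobservable modes of $(\Theta_K,\Omega_K)$ sit at $\lambda=0$. What the paper's argument buys is machinery tied to the true IOH interpretation (the $z^{\top}Sz$ window identity), which remains usable where the $\epsilon I$ excitation is absent, e.g., when reasoning about the unrelaxed cost $J$. Two routine details to make explicit in a full write-up: normalize the (possibly complex) eigenvector in the bound $\|\Omega_K\Theta_K^{k}\|_F\geq \|\Omega_K\Theta_K^{k}v\|/\|v\|$, and note that $\mathbb E[h(L)h(L)^{\top}]$ is finite, so the discarded initial-condition term is indeed positive semidefinite and, in the sufficiency part, $P(t)\to Y_K$ holds.
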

\begin{proof}
    See \ref{prf_lem_stab}. 
\end{proof}

From this lemma and \eqref{Jerr}, it follows that if $J^{\epsilon}(K) < \infty$, then $J(K)$ is also bounded. To establish the coercive property, we introduce the following notation and assumption. For each $\overline K\in\partial\mathbb K$, let $\lambda^{(1)},\dots,\lambda^{(w)}$
denote the distinct eigenvalues of $\Theta_{\overline K}$ such that $|\lambda^{(s)}|=1$.
For each $s\in\{1,\dots,w\}$, let $\mathcal C^{(s)} \subset \mathbb C$ denote a closed positively oriented curve such that
$\lambda^{(s)}\in\rm{int}(\mathcal C^{(s)})$ and no other eigenvalues lie inside or on $\mathcal C^{(s)}$. Let $\mathcal B_{\eta}(\overline{K}) := \{K \in \mathbb R^{n_u \times n_h} ~|~ \|K -\overline{K}\|_F < \eta \}$. 
\begin{assumption}\label{ass_NOBIF}
For each $\overline K\in\partial\mathbb K$ and each $s\in\{1,\dots,w\}$, there exists $\eta^{(s)}>0$ such that, for every $K\in\mathcal B_{\eta^{(s)}}(\overline K)$, no eigenvalue of $\Theta_{K}$ lies on $\mathcal C^{(s)}$ and the eigenvalues of $\Theta_{K}$ lying inside $\mathcal C^{(s)}$ consist of a single eigenvalue $\lambda^{(s)}_K$ whose algebraic multiplicity $m^{(s)}$ is independent of $K$ within $\mathcal B_{\eta^{(s)}}(\overline K)$. 
\end{assumption}
This assumption implies that no bifurcations or mergers of the eigenvalues of $\Theta_K$ occur on the boundary $\partial\mathbb K$. Thus, it is a mild assumption and we believe it does not significantly restrict the scope of the paper. Under this assumption, we show that $J^{\epsilon}(K)$ is coercive, which is an important property for optimizing $K$.

\begin{lem}\label{lem_coer}
 Assume $\rms {\bm \Sigma}$ is $L$-step observable and Assumption \ref{ass_NOBIF} holds. Then, $J^{\epsilon}(K)$ in \eqref{cost_relax} is coercive, i.e., for any sequence $\{K_i\}_{i=0}^{\infty} \subseteq \mathbb K$, we have
 \begin{align}
     J^{\epsilon}(K_i) \rightarrow \infty,~ {\rm if}~\lim_{i\rightarrow \infty}K_i \in\partial \mathbb K~{\rm or}~\lim_{i\rightarrow \infty}\|K_i\| = \infty,
 \end{align}
 where $\mathbb K$ is defined in \eqref{def_thetaK}. 
\end{lem}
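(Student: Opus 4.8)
The plan is to bound $J^{\epsilon}$ from below by a quantity in which the relaxation noise $\epsilon I$ appears explicitly, and then to prove blow-up separately in the two stated regimes. Since $J(K)$ is a limit of averages of nonnegative expectations, we have $J(K)\ge 0$, so \eqref{Jerr} gives at once
\begin{align}
J^{\epsilon}(K)\ \ge\ \epsilon\,\gamma_K\ =\ \epsilon\,\|\Omega_K({\sf z}I-\Theta_K)^{-1}\|_{H_2}^2\ =\ \epsilon\,\tr(\Phi_K),\nonumber
\end{align}
where $\Phi_K=\sum_{t\ge 0}\Theta_K^{\top t}M\,\Theta_K^{t}\ge 0$ solves \eqref{lyap} with $M:=\Psi^{\top}Q\Psi+\Gamma^{\top}K^{\top}RK\Gamma=\Omega_K^{\top}\Omega_K$. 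Everything then reduces to proving $\tr(\Phi_{K_i})\to\infty$ in each regime.

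The crucial structural step is to show that the pair $(\Theta_K,\Omega_K)$ has no unobservable mode on the unit circle: if $v\neq 0$ satisfies $\Theta_K v=\lambda v$ with $|\lambda|=1$, then $\Omega_K v\neq 0$. Suppose instead $\Omega_K v=0$; then $\Psi v=0$ and $K\Gamma v=0$ because $Q,R>0$, whence $\Theta_K v=\Theta v=\lambda v$. Partition $v=[v_u^{\top},v_y^{\top},v_w^{\top},v_v^{\top}]^{\top}$ along the blocks of $h$. The diagonal blocks $J_{n_u},J_{n_w},J_{n_v}$ of $\Theta$ are nilpotent, so $\lambda\neq 0$ forces $v_u=v_w=v_v=0$; thus $v=[0,v_y^{\top},0,0]^{\top}$ and $\Psi v=CM_2 v_y=0$. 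The only nontrivial row-block equation then reads $J_{n_y}v_y+E_{n_y}(CM_2 v_y)=\lambda v_y$, i.e. $J_{n_y}v_y=\lambda v_y$ after $CM_2 v_y=0$; nilpotency of $J_{n_y}$ together with $\lambda\neq 0$ gives $v_y=0$, a contradiction. ($L$-step observability is what makes $M_2=A^L\mathcal O_L^{\dagger}(A,C)$, and hence $\Psi$, well-defined here.)

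For the regime $\|K_i\|\to\infty$, I keep only the $t=0$ term of the Gramian: $\Phi_K\ge M\ge \Gamma^{\top}K^{\top}RK\Gamma$, and since $\Gamma=[I_{n_z},0]$ gives $\Gamma\Gamma^{\top}=I_{n_z}$,
\begin{align}
\tr(\Phi_K)\ \ge\ \tr(\Gamma^{\top}K^{\top}RK\Gamma)=\tr(RKK^{\top})\ \ge\ \lambda_{\min}(R)\,\|K\|_F^2,\nonumber
\end{align}
so $J^{\epsilon}(K_i)\ge \epsilon\lambda_{\min}(R)\|K_i\|_F^2\to\infty$; this is exactly the mechanism repairing the non-coerciveness of Section~\ref{sec_non_coer}. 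For the regime $K_i\to\overline K\in\partial\mathbb K$, fix a unit-modulus eigenvalue $\lambda^{(s)}$ of $\Theta_{\overline K}$. Under Assumption~\ref{ass_NOBIF} the eigenvalue(s) of $\Theta_{K_i}$ inside $\mathcal C^{(s)}$ track $\lambda^{(s)}$ without bifurcation, so there is a convergent sequence of unit eigenvectors $v_{K_i}$, with $\Theta_{K_i}v_{K_i}=\lambda_{K_i}v_{K_i}$, $\lambda_{K_i}\to\lambda^{(s)}$, $|\lambda_{K_i}|<1$, and $v_{K_i}\to\overline v$ an eigenvector of $\Theta_{\overline K}$. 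Testing $\tr(\Phi_{K_i})=\sum_{t\ge0}\|\Omega_{K_i}\Theta_{K_i}^{t}\|_F^2$ against $v_{K_i}$,
\begin{align}
\tr(\Phi_{K_i})\ \ge\ \sum_{t\ge 0}\|\Omega_{K_i}\Theta_{K_i}^{t}v_{K_i}\|^2\ =\ \frac{\|\Omega_{K_i}v_{K_i}\|^2}{1-|\lambda_{K_i}|^2}.\nonumber
\end{align}
By the detectability fact $\|\Omega_{\overline K}\overline v\|>0$, so by continuity the numerator stays bounded below while $1-|\lambda_{K_i}|^2\to 0^{+}$; hence $\tr(\Phi_{K_i})\to\infty$ and $J^{\epsilon}(K_i)\to\infty$.

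The delicate part is this last regime: making the eigenvalue/eigenvector tracking rigorous. Assumption~\ref{ass_NOBIF}, realized through the Riesz projection $\tfrac{1}{2\pi i}\oint_{\mathcal C^{(s)}}(\zeta I-\Theta_K)^{-1}\,d\zeta$ of constant rank $m^{(s)}$, supplies the continuity needed to extract $v_{K_i}\to\overline v$ and guarantees that the critical mode neither merges with another nor escapes $\mathcal C^{(s)}$; combined with the structural detectability lemma, which keeps $\|\Omega_{K_i}v_{K_i}\|$ away from $0$, this forces the numerator/denominator blow-up. The $\|K\|\to\infty$ regime, by contrast, is immediate from $\Phi_K\ge M$ and requires no spectral machinery.
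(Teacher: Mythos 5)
Your proof is correct, and for the hard (boundary) regime it takes a genuinely different and substantially lighter route than the paper's. Both proofs start from $J^{\epsilon}(K)\ge\epsilon\,\tr(\Phi_K)$, and your treatment of $\|K_i\|\to\infty$ is essentially the paper's (the paper bounds $J^{\epsilon}(K)\ge\epsilon\,\sigma_{\min}(R)\|K\|_F^2$ via $Y_K\ge\epsilon I$; your $\Phi_K\ge\Gamma^{\top}K^{\top}RK\Gamma$ is the dual of the same estimate). For $K_i\to\overline K\in\partial\mathbb K$, however, the paper mounts a five-step argument built to cope with a possibly \emph{defective} unit-circle eigenvalue: Riesz projections of constant rank, the binomial expansion $\Theta_K^t{\sf P}_K=\sum_k\binom{t}{k}\lambda_K^{t-k}N_K^k{\sf P}_K$, positivity of the Gram matrix $S(r)$ with the asymptotics $1/\alpha_k(r)\ge c/(1-r^2)^{2k+1}$, and a contradiction step (via Lemma \ref{lem_stab}) to produce a nonzero coefficient $A_{k_\star}^{(s_\star)}(\overline K)$. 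You bypass all of this by testing $\Phi_{K_i}$ against a genuine unit eigenvector of the \emph{stable} matrix $\Theta_{K_i}$ — which exists regardless of Jordan structure — yielding $\tr(\Phi_{K_i})\ge\|\Omega_{K_i}v_{K_i}\|^2/(1-|\lambda_{K_i}|^2)$, and you exclude a vanishing numerator through your structural lemma: the nilpotent shift blocks of $\Theta$ force any eigenvector with $\lambda\neq 0$ annihilated by $\Omega_K$ to vanish (this is where $Q>0$, $R>0$, and $L$-step observability enter, and it is a direct, model-structural substitute for the paper's contradiction Step (d)). A notable byproduct is that your route barely uses Assumption \ref{ass_NOBIF}: continuity of eigenvalues and compactness of the unit sphere already supply what you need, so your argument is both more elementary and slightly more general, at the price of proving only what this lemma needs rather than the projection-continuity facts the paper reuses.

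One small rigor gap to patch: constant algebraic multiplicity inside $\mathcal C^{(s)}$ does not by itself yield convergence of the chosen eigenvectors $v_{K_i}$ along the whole sequence (the geometric multiplicity may drop and the eigendirections may rotate), so the phrase ``there is a convergent sequence of unit eigenvectors'' should be replaced by a subsequence extraction: if $J^{\epsilon}(K_i)\not\to\infty$, pass to a bounded subsequence, extract by compactness unit eigenvectors converging to some $\overline v$ with $\Theta_{\overline K}\overline v=\lambda^{(s)}\overline v$, and contradict your lower bound since $\Omega_{K_i}v_{K_i}\to\Omega_{\overline K}\overline v\neq 0$ while $1-|\lambda_{K_i}|^2\to 0^{+}$. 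This repair is routine and does not affect the correctness of your argument.
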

\begin{proof}
See \ref{prf_lem_coer}. 
\end{proof}

Since $J^{\epsilon}(K)$ is coercive and clearly has a lower bound, the sublevel set
\begin{align}\label{sublevel_set}
    \mathbb{K}_{c} := \{K \in \mathbb{K}~|~ J^{\epsilon}(K) \leq c\}
\end{align} 
is compact if Assumption \ref{ass_NOBIF} holds. Next, we show that $J^{\epsilon}(K)$ is smooth over this sublevel set. 
\begin{lem}\label{lem_hesian}
Consider $J^{\epsilon}(K)$ in \eqref{cost_relax}. Impose Assumption \ref{ass_NOBIF}. Given $c \geq J^{\epsilon}(K_0)$, consider $\mathbb K_c$ in \eqref{sublevel_set}. Then, there exists $q > 0$ that satisfies:  
\begin{align}
    \|\nabla^2 J^{\epsilon}(K)\| \leq q, \quad \forall K \in \mathbb K_{c}. \label{q_def}
\end{align}
\end{lem}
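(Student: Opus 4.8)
The plan is to reduce the bound \eqref{q_def} to a compactness argument. Since Lemma~\ref{lem_coer} (under Assumption~\ref{ass_NOBIF}) makes $J^{\epsilon}$ coercive and $J^{\epsilon}$ is bounded below, the sublevel set $\mathbb{K}_c$ in \eqref{sublevel_set} is a compact subset of the \emph{open} set $\mathbb{K}$: it stays bounded in norm and, crucially, bounded away from $\partial\mathbb{K}$. Once I show that $J^{\epsilon}$ is twice continuously differentiable on $\mathbb{K}$, the scalar map $K \mapsto \|\nabla^2 J^{\epsilon}(K)\|$ is continuous on the compact set $\mathbb{K}_c$ and therefore attains a finite maximum, which I take as $q$. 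So the real content is the $C^2$ regularity of $J^{\epsilon}$ on $\mathbb{K}$.

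To establish that regularity I would start from the gradient formula $\nabla J^{\epsilon}(K) = 2 W_K Y_K \Gamma^{\top}$ in \eqref{nablaJep}, in which $K$ enters polynomially and the only implicitly defined objects are the solutions $\Phi_K$ and $Y_K$ of the discrete Lyapunov equations \eqref{lyap}–\eqref{EKH2}. The key point is that for every $K \in \mathbb{K}$ the matrix $\Theta_K = \Theta + \Pi_u K \Gamma$ is Schur stable by the definition \eqref{def_thetaK} of $\mathbb{K}$; hence no product of two eigenvalues of $\Theta_K$ equals one, so after vectorization the coefficient matrices $\Theta_K^{\top}\otimes\Theta_K^{\top}-I$ and $\Theta_K\otimes\Theta_K-I$ are nonsingular throughout $\mathbb{K}$. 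Their entries are polynomial in $K$, and matrix inversion is real-analytic wherever it is nonsingular, so $K \mapsto \Phi_K$ and $K \mapsto Y_K$ are $C^{\infty}$ (in fact real-analytic) on $\mathbb{K}$. Since $W_K$ in \eqref{EKH1} is a smooth combination of $K$ and $\Phi_K$, the gradient is $C^{\infty}$ on $\mathbb{K}$, so $\nabla^2 J^{\epsilon}$ exists and is continuous there. Assembling the pieces then gives the claim: $\mathbb{K}_c \subset \mathbb{K}$ is compact, $\nabla^2 J^{\epsilon}$ is continuous on $\mathbb{K} \supseteq \mathbb{K}_c$, and hence $q := \max_{K \in \mathbb{K}_c}\|\nabla^2 J^{\epsilon}(K)\|$ is finite and satisfies \eqref{q_def}.

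The main obstacle I anticipate is not the differentiation itself — which is routine once $\Theta_K$ is known to be Schur — but the bookkeeping that keeps $\mathbb{K}_c$ strictly inside $\mathbb{K}$: because $J^{\epsilon}$ blows up on $\partial\mathbb{K}$, and $\nabla^2 J^{\epsilon}$ itself degenerates as $\Theta_K$ loses Schur stability, I must invoke coercivity (Lemma~\ref{lem_coer}) before appealing to continuity. An alternative, more quantitative route would bypass the extreme value theorem and instead derive explicit perturbation bounds for $\Phi_K$ and $Y_K$ under perturbations of $K$ — uniformly bounding $\|\Theta_K\|$, $\|\Phi_K\|$, $\|Y_K\|$ on the compact set $\mathbb{K}_c$ and controlling the sensitivity of the Lyapunov solutions — to obtain a Lipschitz constant for $\nabla J^{\epsilon}$, hence an explicit $q$ usable for selecting the step size $\alpha$ in \eqref{gd}. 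The compactness argument is shorter; the perturbation route has the advantage of yielding an effective value of $q$.
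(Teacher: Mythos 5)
Your proposal is correct, but it takes a genuinely different route from the paper. The paper follows the quantitative perturbation analysis of \cite{bu2019lqr}: starting from the representation $J^{\epsilon}(K)=\tr(\Phi_K V_h^{\epsilon})+{\rm c}$ in \eqref{Je_initial}, it invokes an explicit second-order expansion bound \eqref{l8_pf2} on $\|\nabla^2 J^{\epsilon}(K)\|$ in terms of $\Phi_K$, $Y_K$, and the directional derivative $X$ of $\Phi_K$; it then bounds $X$ through the Lyapunov inequality \eqref{XX}, uses compactness of $\mathbb K_c$ to extract uniform constants $a_1,a_2$, and exploits the key estimate $\|\Phi_K\|\le (J^{\epsilon}(K)-{\rm c})/\epsilon\le c/\epsilon$ to arrive at the semi-explicit constant $q=2(\|R\|+c\epsilon^{-1})a_2+4\sqrt{2}\,a_1a_2$ in \eqref{exact_q}. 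You instead prove $C^2$ (indeed real-analytic) regularity of $J^{\epsilon}$ on $\mathbb K$ by vectorizing the Lyapunov equations \eqref{lyap}--\eqref{EKH2} and noting that Schur stability of $\Theta_K$ makes $\Theta_K^{\top}\otimes\Theta_K^{\top}-I$ nonsingular, so $\Phi_K$ and $Y_K$ are rational in $K$ with nonvanishing denominator; then continuity of $\|\nabla^2 J^{\epsilon}(\cdot)\|$ on the compact $\mathbb K_c$ gives $q$ by the extreme value theorem. Both arguments rest on the same compactness of $\mathbb K_c$ furnished by Lemma~\ref{lem_coer} (the paper also needs it, non-constructively, for $a_1$ and $a_2$). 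Your soft argument is shorter and avoids the expansion bookkeeping, but it yields no information about $q$; the paper's route buys the $\epsilon^{-1}$ scaling of $q$, which is the practically relevant payoff since the admissible step size $\alpha\in(0,2/q)$ in Theorem~\ref{linear_convergence} must shrink as $\epsilon\to 0$ --- precisely the quantitative alternative you sketched in your closing paragraph, which is essentially what the paper carries out (though not fully constructively, since $a_1,a_2$ remain existence constants).
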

\begin{proof}
See \ref{prf_lem_hesian}. 
\end{proof}

Smoothness over a sublevel set is known for LQR \cite{fazel} and static output-feedback control \cite{duan2023optimization} problems. Lemma \ref{lem_hesian} shows that the relaxed cost as a function of the IOH gain has a similar property. With coerciveness and smoothness over a compact sublevel set established, we are now ready to present our main convergence result.

\begin{theo}\label{linear_convergence}
Suppose $\rms {\bm \Sigma}$ in \eqref{hat Sigma} is $L$-step observable. Impose Assumption \ref{ass_NOBIF}. Let $q$ be the parameter satisfying \eqref{q_def} and set $\alpha \in (0, 2/q)$. Then the PGM update rule \eqref{gd} converges to a stationary point of $J^{\epsilon}$, i.e.,  
\begin{equation}\label{statio_Je}
    \lim_{i \to \infty} \| \nabla J^{\epsilon}(K_i)\|_F = 0.
\end{equation}
Furthermore, it is also an $\mathcal O({\epsilon})$-stationary point of $J$, i.e.,  
\begin{equation}\label{statio_Jori}
    \lim_{i \to \infty} \| \nabla J(K_i)\|_F \leq \mathcal O(\epsilon).
\end{equation}
\end{theo}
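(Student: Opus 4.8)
The plan is to prove the two assertions in sequence: first the convergence \eqref{statio_Je} to a stationary point of the relaxed cost $J^{\epsilon}$ by a standard smooth‑descent argument on a compact sublevel set, and then the transfer \eqref{statio_Jori} to $\mathcal O(\epsilon)$‑stationarity of the original cost $J$ via the exact identity \eqref{Jerr}. For \eqref{statio_Je}, I would fix $c := J^{\epsilon}(K_0)$ and restrict attention to the sublevel set $\mathbb K_c$ in \eqref{sublevel_set}, which is nonempty and compact by Lemma~\ref{lem_coer} (coerciveness) together with the trivial lower bound $J^{\epsilon}\ge 0$. On $\mathbb K_c$, Lemma~\ref{lem_hesian} gives $\|\nabla^2 J^{\epsilon}\|\le q$, so $\nabla J^{\epsilon}$ is $q$‑Lipschitz there. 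The first task is to show, by induction on $i$, that every iterate of \eqref{gd} stays in $\mathbb K_c$: assuming $K_i\in\mathbb K_c$, the quadratic upper bound (descent lemma) yields $J^{\epsilon}(K_{i+1}) \le J^{\epsilon}(K_i) - \alpha\bigl(1-\tfrac{q\alpha}{2}\bigr)\|\nabla J^{\epsilon}(K_i)\|_F^2$, and for $\alpha\in(0,2/q)$ the coefficient is positive, so $J^{\epsilon}(K_{i+1})\le J^{\epsilon}(K_i)\le c$ and hence $K_{i+1}\in\mathbb K_c$. Summing this inequality over $i$ and using $\inf J^{\epsilon}>-\infty$ gives $\sum_{i\ge 0}\|\nabla J^{\epsilon}(K_i)\|_F^2<\infty$, which forces $\|\nabla J^{\epsilon}(K_i)\|_F\to 0$, i.e.\ \eqref{statio_Je}.

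For \eqref{statio_Jori}, I would differentiate \eqref{Jerr} to obtain $\nabla J(K)=\nabla J^{\epsilon}(K)-\epsilon\,\nabla\gamma_K$. Concretely, decomposing the solution of \eqref{EKH2} as $Y_K=Y_K^0+\epsilon\widetilde Y_K$, where $Y_K^0$ solves \eqref{EKH2} with $\epsilon=0$ and $\widetilde Y_K$ is the controllability Gramian of $(\Theta_K,I)$, the gradient formula \eqref{nablaJep} of Lemma~\ref{lem_nablaJ} (applied with $\epsilon=0$) gives $\nabla J(K)=2W_K Y_K^0\Gamma^{\top}$ and $\nabla\gamma_K=2W_K\widetilde Y_K\Gamma^{\top}$. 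Taking Frobenius norms, $\|\nabla J(K_i)\|_F\le\|\nabla J^{\epsilon}(K_i)\|_F+\epsilon\|\nabla\gamma_{K_i}\|_F$. Since all iterates lie in the compact set $\mathbb K_c$ and $\nabla\gamma_K$ is continuous on the open set $\mathbb K$, one sets $\beta:=2\sup_{K\in\mathbb K_c}\|W_K\widetilde Y_K\Gamma^{\top}\|_F$, finite for each fixed $\epsilon$; passing to the limit with \eqref{statio_Je} then yields $\limsup_i\|\nabla J(K_i)\|_F\le\beta\epsilon$, matching \eqref{def:eps_stationary}.

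The main obstacle lies in two places: the feasibility step of \eqref{statio_Je} and, more seriously, the $\epsilon$‑uniformity of $\beta$ in \eqref{statio_Jori}. For the former, the descent lemma's quadratic bound requires $\nabla J^{\epsilon}$ to be Lipschitz along the entire segment $[K_i,K_{i+1}]$, but $\mathbb K_c$ need not be convex, so the segment could exit it; I would resolve this by working on a slightly enlarged compact sublevel set $\mathbb K_{c'}\supset\mathbb K_c$ with $c'>c$, on which smoothness still holds with a uniform constant (this set has positive distance to $\partial\mathbb K$), and choosing the step small enough that each short step remains inside $\mathbb K_{c'}$.

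The genuinely delicate point is that \eqref{def:eps_stationary} demands $\beta$ be independent of $\epsilon$, whereas both $W_K$ (through $\Phi_K$) and $\widetilde Y_K$ blow up as $K\to\partial\mathbb K$, and $\mathbb K_c$ can extend closer to $\partial\mathbb K$ as $\epsilon\to 0$ (indeed $\gamma_K\le c/\epsilon$ only, which degrades as $\epsilon\to0$). A naive product bound therefore fails to stay $O(\epsilon)$. I would control this by fixing $K_0$ and $\epsilon_0$, bounding $\gamma_K\epsilon\le c\le J(K_0)+\gamma_{K_0}\epsilon_0$ along the trajectory to limit uniformly in $\epsilon\le\epsilon_0$ how close the iterates approach the boundary, and — crucially — by exploiting the stationarity relation $W_{\overline K}Y_{\overline K}\Gamma^{\top}=0$ to extract the exact cancellation $\nabla J(\overline K)=-2\epsilon\,W_{\overline K}\widetilde Y_{\overline K}\Gamma^{\top}$, so that the two competing blow‑ups are tamed to $O(\epsilon)$ rather than $O(1)$. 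Establishing this cancellation rigorously, so that $\beta$ can be taken free of $\epsilon$, is where the real work lies.
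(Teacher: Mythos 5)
Your proposal is correct in substance and follows the same two-step route as the paper: a descent-lemma argument on the compact sublevel set $\mathbb K_c$ (coerciveness from Lemma~\ref{lem_coer}, Hessian bound \eqref{q_def} from Lemma~\ref{lem_hesian}, summation of the per-step decrease) for \eqref{statio_Je}, and differentiation of the exact identity \eqref{Jerr} plus a compactness bound on $\nabla\gamma_K$ for \eqref{statio_Jori}. The one place you genuinely diverge is the feasibility step, and there your fix is weaker than the paper's. The paper's Lemma~\ref{lem10} stays entirely inside $\mathbb K_c$: defining $\alpha_{\rm M}$ as the largest step for which the whole ray segment $\{K-\alpha'\nabla J^{\epsilon}(K):\alpha'\in[0,\alpha]\}$ remains in $\mathbb K_c$, it argues that if $\alpha_{\rm M}<2/q$ then continuity forces $J^{\epsilon}(K_{\alpha_{\rm M}})=c$, while the quadratic bound \eqref{prrrr} applies along the segment (which lies in $\mathbb K_c$ by construction, so \eqref{q_def} is valid there, with no convexity needed) and gives $J^{\epsilon}(K_{\alpha_{\rm M}})<c$ --- a contradiction, so $\alpha_{\rm M}\ge 2/q$ and every fixed $\alpha\in(0,2/q)$ keeps the iterates in $\mathbb K_c$. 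Your alternative --- enlarging to $\mathbb K_{c'}$ and ``choosing the step small enough'' --- as stated proves less: the Hessian bound on the larger set is some $q'\ge q$, so your argument certifies descent only for $\alpha\in(0,2/q')$, and a shrinking-step clause is incompatible with the theorem's fixed $\alpha\in(0,2/q)$. Replace it with the maximal-$\alpha$ contradiction argument, which needs no enlargement.

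On the second claim, your computation coincides with the paper's Lemma~\ref{lem_stationary}: $\nabla J^{\epsilon}(K)=\nabla J(K)+\epsilon\nabla\gamma_K$ with $\nabla\gamma_K = 2W_K\bigl(\sum_{t=0}^{\infty}\Theta_K^{t}\Theta_K^{t\top}\bigr)\Gamma^{\top}$, then $\beta:=\sup_{K\in\mathbb K_c}\|\nabla\gamma_K\|_F$ finite by compactness, the triangle inequality, and the limit via \eqref{statio_Je}; your ``cancellation'' $\nabla J(\overline K)=-2\epsilon W_{\overline K}\widetilde Y_{\overline K}\Gamma^{\top}$ is literally the paper's identity $\nabla J(\overline K)=-\epsilon\nabla\gamma_{\overline K}$, not an extra ingredient. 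Your closing worry about the $\epsilon$-uniformity of $\beta$ is astute, but be aware the paper does not resolve it either: its $\beta$ is exactly the compactness supremum over $\mathbb K_c$, a set that itself depends on $\epsilon$, so measured strictly against Definition~3 (which demands $\beta$ independent of $\epsilon$) the published proof has the same softness you identified. Establishing that $\sup_{K\in\mathbb K_c}\|\nabla\gamma_K\|_F$ remains bounded as $\epsilon\to0$ is work the paper omits; flagging it is a fair criticism of the paper rather than a gap in your proposal relative to it.
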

\begin{proof}
See \ref{prf_linear_convergence}. 
\end{proof}
This theorem guarantees that Algorithm 1 with $\alpha \in (0, 2/q)$ finds an $\mathcal{O}(\epsilon)$-stationary point of $J$, i.e., $\overline{K} \in \mathbb{K}_c$ such that $\|\nabla J(\overline{K})\|_F \leq \mathcal{O}(\epsilon)$. 
While the theoretical result ensures only convergence to epsilon stationary points, numerical simulations suggest that Algorithm 1 globally converges to the vicinity of the globally optimal controller of the original problem.

\section{Numerical Simulation}
In this section, we demonstrate the effectiveness of Algorithm 1 using a numerical example. Consider $^{\rm s}{\bf \Sigma}$ in \eqref{hat Sigma} with
\begin{align}
\hspace{-0mm}
 &A \hspace{-0.5mm}=\hspace{-0.5mm} \begin{bmatrix}
 0.7349 &  0.1195 & 0.3545\\
    0.08005 & 0.961 &    -0.1506\\
    0.3654 & -0.1217 & 0.5076 
\end{bmatrix}\hspace{-1mm}, ~
B \hspace{-0.5mm}=\hspace{-0.5mm} \begin{bmatrix}
 -0.1158 \\
   0\\
-0.5297
\end{bmatrix} \nonumber \\
&C \hspace{-0.5mm}=\hspace{-0.5mm} 
\begin{bmatrix}    
-0.2326 & -0.5851 & 0.9771 \\
-0.1116 & 0 & 0.6755
\end{bmatrix}. \nonumber
\end{align}
We set $V_w = 0.1I_3$, $V_v = 0.1$ in \eqref{def_d}, and $Q=100I_2$, $R=10$ in \eqref{defJ_orig}. Under this setting, the optimal controller $\rmsK {\bm K}_{\rm LQG}$ is given by \eqref{dyn_LQG}, with 
\begin{align}
    H_{\rm LQG} \hspace{-0.5mm}=\hspace{-0.5mm} 
    \begin{bmatrix} 
-0.0422 & 0.6096 \\
-0.5763 & 0.3403 \\
0.1703 & 0.4165
\end{bmatrix},~ F_{\rm LQG} \hspace{-0.5mm}=\hspace{-0.5mm} 
\begin{bmatrix}    
0.3031\\
-1.0174\\
0.8212
\end{bmatrix}^{\top}. \nonumber 
\end{align}
An IOH representation  $K_{\rm LQG}$ of $\rmsK {\bm K}_{\rm LQG}$ can be constructed using claim a) of Lemma \ref{lem2}. In the following experiments, we used $\alpha = 1.0\times10^{-3}$ and ${\epsilon} = 10^{-8}$. 

First, we set $L = 3$.
Since the optimal controller $\rmsK {\bm K}_{\rm LQG}$ has a three dimensional state space (i.e., $n_{\xi} = 3$ in \eqref{dyn_K}), both $\rms {\bm \Sigma}$ and $\rmsK {\bm K}_{\rm LQG}$ are  $L$-step observable in this case.
To simulate Algorithm \ref{alg1}, we randomly generated initial gains $K_0$ with $\|K_0\|_F = 1$ from the uniform distribution, from which 20 stabilizing gains were randomly selected. For each of these initial gains, we applied Algorithm \ref{alg1} to generate a sequence $K_i$ for $i=0, \ldots, 10^5$. In Fig.~\ref{fig_JL3}, the progress $\rmsJ J(\rmsK {\bm K}_i)$ is plotted with a colored solid line for each of $20$ independent experiments. For comparison, $\rmsJ J(\rmsK{\bm K}_{\rm LQG})$ is plotted with a black dotted line in the same figure.
In Fig.~\ref{fig_BodeL3}, the Bode plots of the obtained 20 controllers at $i = 10^5$ are shown by colored solid lines and those of $\rmsK {\bm K}_{\rm LQG}$ are shown by black dotted lines. These figures indicate the convergence of Algorithm \ref{alg1} to the {\it optimal controller} regardless of the choice of the initial gains.

\begin{figure}[t]
\centering
    \includegraphics[width=1.0\columnwidth]{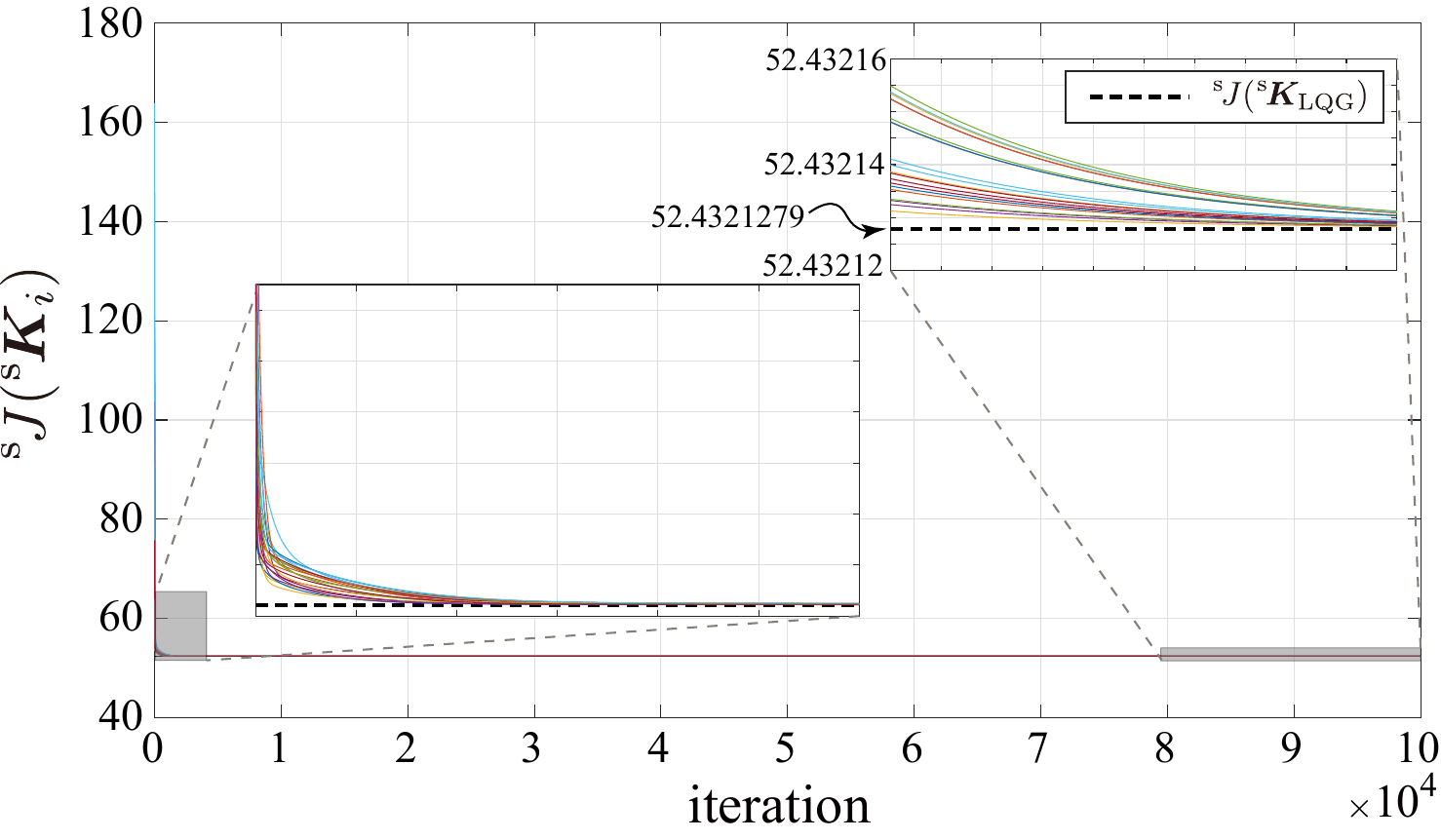} 
    \caption{The colored solid lines show 20 variations of $\rms J(\rmsK {\bm K}_i)$ for iteration $i$ when $L=3$ whereas the black dotted line shows $\rms J(\rmsK {\bm K}_{\rm LQG})$.} 
    \label{fig_JL3} 
\end{figure}
\begin{figure}[t]
\centering
    \includegraphics[width=1.0\columnwidth]{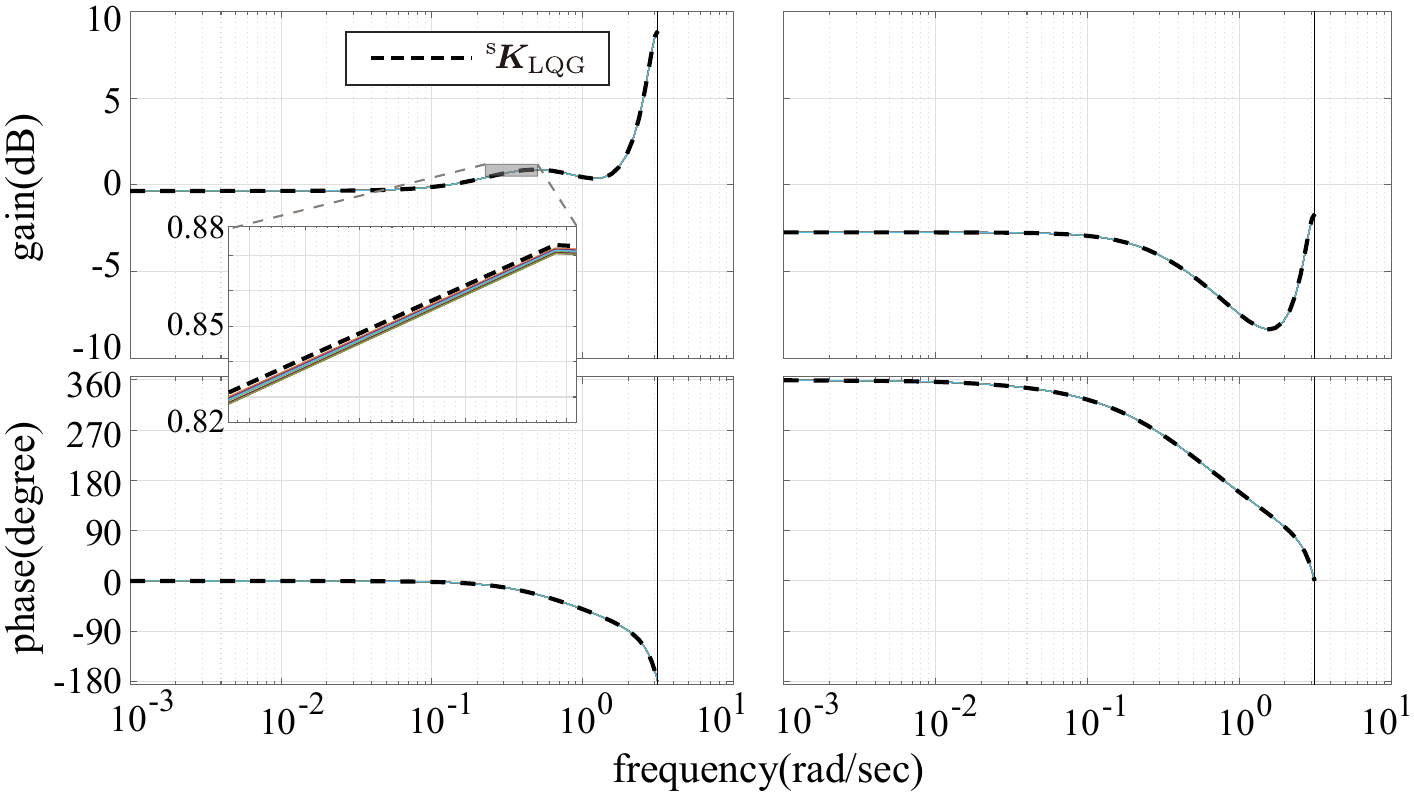} 
    \caption{The colored solid lines show the Bode diagrams of the 20 designed controllers $\rmsK {\bm K}_{10^5}$ when $L=3$. The black dotted line shows that of the true LQG controller $\rmsK {\bm K}_{\rm LQG}$.} 
    \label{fig_BodeL3} 
\end{figure}
\begin{figure}[t]
\centering
    \includegraphics[width=1.0\columnwidth]{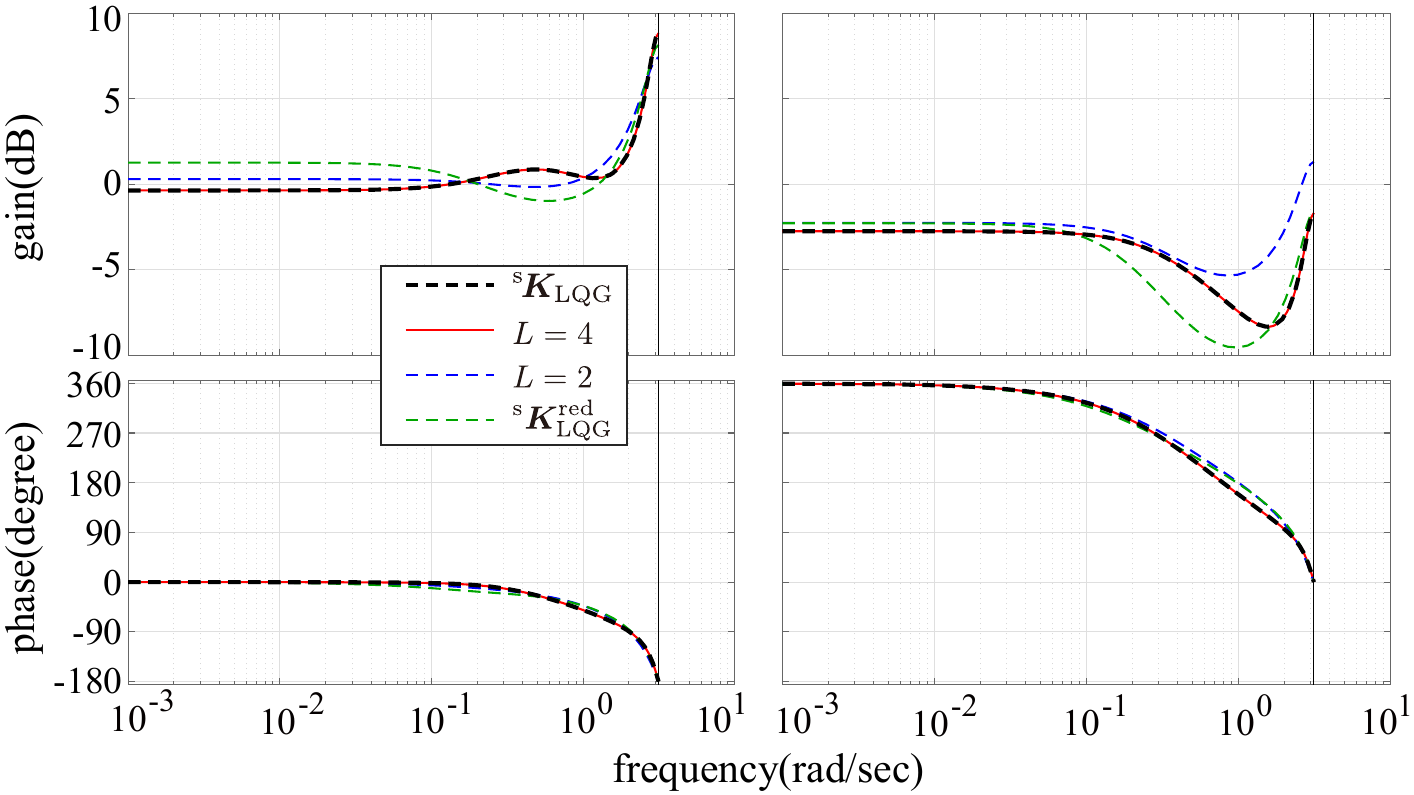} 
    \caption{The red solid line and blue dash-dotted line show the bode diagram of a designed four-dimensional $\rmsK {\bm K}_{10^5}$ and two-dimensional $\rmsK {\bm K}_{10^5}$, respectively, and the green dashed line shows that of $\rmsK {\bm K}_{\rm LQG}^{\rm red}$.} 
    \label{fig_BodeL24} 
\end{figure}
\begin{figure}[t]
\centering
    \includegraphics[width=1.0\columnwidth]{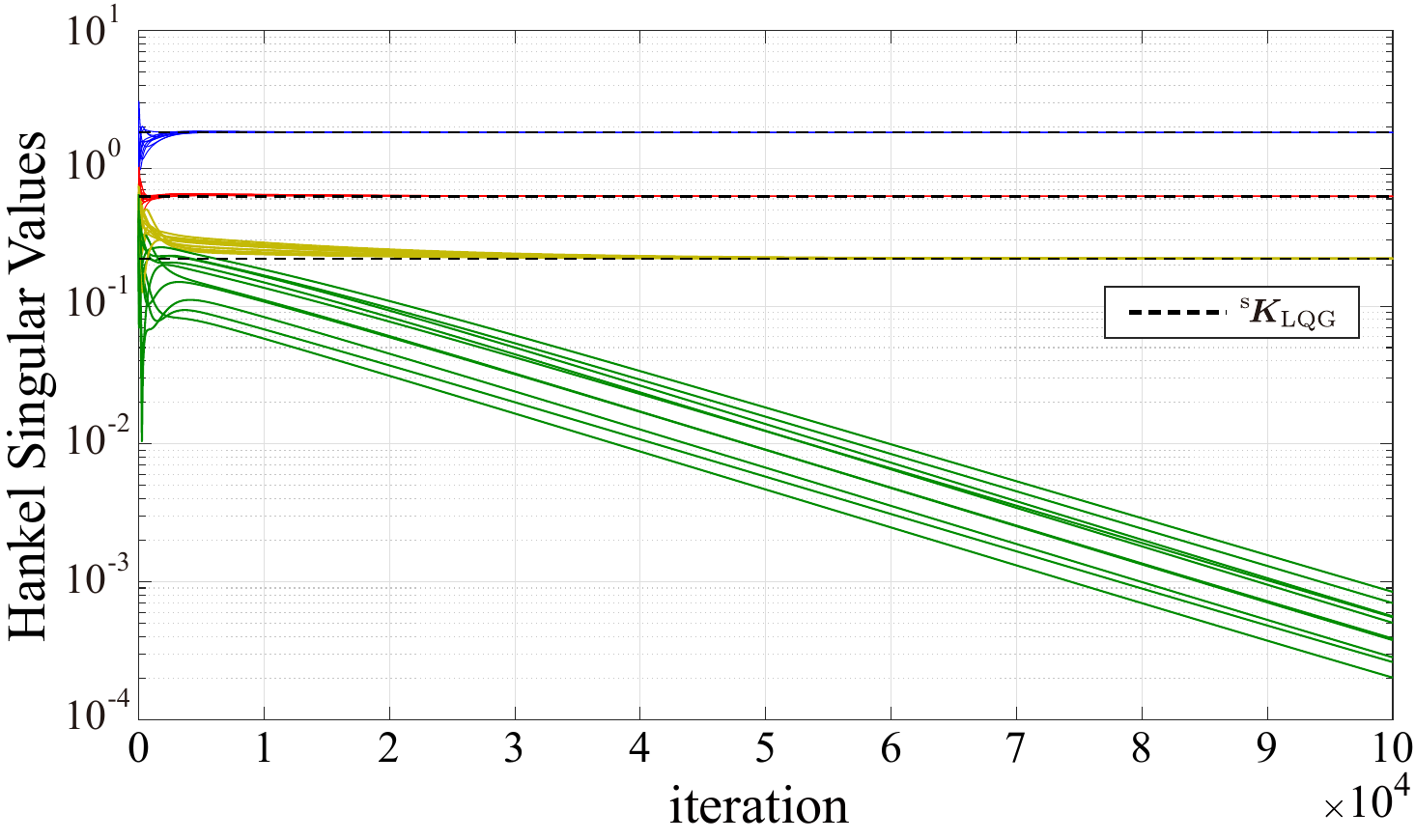} 
    \caption{The colored lines show the four Hankel singular values of $\rmsK {\bm K}_{i}$ for iteration $i$ when $L=4$. Black dotted lines show the three Hankel singular values of $\rmsK {\bm K}_{\rm LQG}$. } 
    \label{fig_hankel} 
\end{figure}

Next, we consider $L = 4$. In this case, the dimension of the dynamic controller to be designed is four.
Since $\rmsK {\bm K}_{\rm LQG}$ is three-dimensional, there is one extra redundant dimension.
The solid red line in Fig.~\ref{fig_BodeL24} shows the Bode diagram of the four-dimensional controller obtained after $i = 10^5$ iterations of  Algorithm \ref{alg1}.
For comparison, the Bode diagram of $\rmsK {\bm K}_{\rm LQG}$ is shown with the black dotted line. We observe a close match in the frequency responses of $\rmsK{\bm K}_{\rm LQG}$ and the synthesized four-dimensional controller. 
To better depict the behavior of Algorithm \ref{alg1}, Fig.~\ref{fig_hankel} shows the four Hankel singular values of $\rmsK{\bm K}_i$ (shown in blue, red, yellow, and green) over the iteration for $10$ different initial conditions. 
We observe that the first three converge to those of $\rmsK{\bm K}_{\rm LQG}$ (dotted black lines) whereas the fourth diminishes to zero. This result indicates that Algorithm \ref{alg1} not only recovers the optimal controller when overparameterized, but also effectively discards the redundant dimension by driving the corresponding Hankel singular value to zero.

We also consider the case with $L=2$ in Fig.~\ref{fig_BodeL24}.
In this case, even though $\rms {\bm \Sigma}$ is $L$-step observable, $\rmsK {\bm K}_{\rm LQG}$ is not. Since the dynamic controller
$\rmsK {\bm K}$ obtained by Algorithm \ref{alg1} under this setting is only two-dimensional, it is inevitably suboptimal. In Fig.~\ref{fig_BodeL24}, the Bode diagram of the two-dimensional controller $\rmsK {\bm K}_i$ obtained after a sufficient number of iterations ($i = 10^5$) is indicated by the blue dashed-dotted line. 
The cost attained by this controller is $\rmsJ J(\rmsK {\bm K}_{10^5}) = 52.5565$, which is slightly worse than the global optimum $\rmsJ J(\rmsK{\bm K}_{\rm LQG}) = 52.432179$. 
One of the well-known methods for constructing such reduced order controllers is controller reduction, where we first construct the $n_x$-dimensional LQG controller for an $n_x$-dimensional plant, and then reduce it using model reduction techniques such as the balanced truncation \cite{antoulas2005approximation}. In the numerical example, let ${\bm K}_{\rm LQG}^{\rm red}$ denote the two-dimensional controller constructed by this controller reduction approach. The cost for this controller is $\rmsJ J({\bm K}_{\rm LQG}^{\rm red}) = 53.1295$, which is slightly larger than $\rmsJ J(\rmsK {\bm K}_{10^5}) = 52.5565$. This indicates that Algorithm \ref{alg1} can find a better solution than the well-known controller reduction approach. 


\section{Conclusion}
Using the IOH representations of dynamic output feedback controllers, we provided a PGM-based LQG controller design algorithm that is convergent to $\mathcal O(\epsilon)$-stationary points. Numerical simulations suggested that the algorithm can find the optimal LQG controller and outperforms standard controller reduction techniques. Future research topics include further convergence analysis, theoretical investigation of global convergence guarantees, and development of model-free implementations of the proposed PGM. 

\appendix

\section{Proof of Lemma \ref{sys_lem}}\label{prf_sys_lem}
To simplify the notation, we denote 
$\mathcal R_L(A,B)$, 
$\mathcal R_L(A,I)$, 
$\mathcal O_L(A,C)$, 
$\mathcal H_L(A,B,C)$, 
$\mathcal H_L(A,I,C)$, as
$\mathcal R^u$, 
$\mathcal R^w$, 
$\mathcal O$,
$\mathcal H^u$, 
$\mathcal H^w$, respectively. The third equation in \eqref{def_sigma} is obvious. It follows from \eqref{hat Sigma} that
\begin{align}
\HH \H {[y]}_{t-L}^{t-1} & \h =\h \mathcal Ox(t-L) \h+\h \mathcal H^u[u]_{t-L}^{t-1}\h+\h \mathcal H^w[w]_{t-L}^{t-1} \h+\h [v]_{t-L}^{t-1},   \label{sys_dyn_y} \\
\HH \H x(t) & \h =\h A^Lx(t-L) + \mathcal R^u[u]_{t-L}^{t-1} + \mathcal R^w[w]_{t-L}^{t-1}.  \label{sys_dyn_y1}
\end{align}
Since $\rms {\bm \Sigma}$ is $L$-step observable, there exists $\mathcal O^{\dagger}$ such that $\mathcal O^{\dagger}\mathcal O = I$. Therefore, \eqref{sys_dyn_y} implies $x(t-L) = {\mathcal O}^{\dagger}\left({[y]}_{t-L}^{t-1} - [v]_{t-L}^{t-1} - \mathcal H^u[u]_{t-L}^{t-1} - \mathcal H^w[w]_{t-L}^{t-1}\right)$. By substituting this into \eqref{sys_dyn_y1}, we obtain  
\begin{align}
  x(t) = M_{12} z(t) + M_{34} e(t),  \label{x_IOH_rep}
\end{align}
where $M_{12} := [M_1, M_2]$ and $M_{34} := [M_3, M_4]$. By substituting this into the output equation in \eqref{hat Sigma}, we obtain 
\begin{align}\label{x_IOH}
y(t) = CM_{12} z(t) + CM_{34}e(t) + v(t)
\end{align}
which coincides with the second equation in \eqref{def_sigma}. Furthermore, from the definition of $z$ in \eqref{def_IOH}, the dynamics of $z$ is described as
\begin{align}
 z(t+1) &= \left[\begin{array}{c}
\left[\begin{array}{c}
[0, I][u^{\top}(t-L), ([u]^{t-1}_{t-L+1})^{\top}]^{\top} \\ u(t)
\end{array}
 \right] \\
\left[\begin{array}{c}
[0, I][y^{\top}(t-L), ([y]^{t-1}_{t-L+1})^{\top}]^{\top}  \\ y(t)
\end{array}
 \right]
\end{array}
 \right] \nonumber \\
 & = \left[\H \begin{array}{c}
J_{n_u}[u]^{t-1}_{t-L} \\
J_{n_y}[y]^{t-1}_{t-L} 
\end{array}
 \H\right] + \left[\H\begin{array}{c}
E_{n_u} \\ 0
\end{array}
 \H\right] u(t) + \left[\H\begin{array}{c}
 0 \\ E_{n_y}
\end{array}
 \H\right] y(t). \nonumber 
\end{align}
Similarly, we have 
\begin{align}
 e(t+1) = \left[\H \begin{array}{c}
J_{n_w}[w]^{t-1}_{t-L} \\
J_{n_v}[v]^{t-1}_{t-L} 
\end{array}
 \H\right] + \left[\H\begin{array}{c}
E_{n_w} \\ 0
\end{array}
 \H\right] w(t) + \left[\H\begin{array}{c}
 0 \\ E_{n_v}
\end{array}
 \H\right] v(t). \nonumber 
\end{align}
By combining these two equations and \eqref{x_IOH}, the first equation in \eqref{def_sigma} follows. This completes the proof.
\qed

\section{Proof of Lemma \ref{prp1}}\label{prf_lem3}
From Lemmas 1-2, the pair $\{u,y\}$ of the closed-loop  $({\bm \Sigma}, {\bm K})$ is identical to that of $(\rms {\bm \Sigma}, \rmsK {\bm K})$ for any triple $\{x(0), w, v\}$ and for $t \geq L$. Furthermore, 
$\rmsJ J(\rmsK {\bm K}) = \lim_{T \rightarrow \infty} \mathbb E \left[\frac{1}{T} \sum_{t = L}^{T} y^{\top}(t)Qy(t) + u^{\top}(t)Ru(t) \right]$, which is the same as the RHS of \eqref{cost}. This completes the proof. 
\qed

\section{Proof of Theorem \ref{opt_thm}}\label{prf_thm1}
First, we show \eqref{opteq1}. Without loss of generality, we assume that $\rmsK {\bm K}_{\rm LQG}$ is minimal. If not, by replacing the following proof with the minimal realization of $\rmsK {\bm K}_{\rm LQG}$, we have the same claim. As $\rmsK {\bm K}_{\rm LQG}$ is $L$-step observable, the claim a) in Lemma \ref{lem2} holds for $G \leftarrow G_{\rm LQG}$, $H \leftarrow H_{\rm LQG}$, and $F \leftarrow F_{\rm LQG}$. Denoting the resulting $K$  by $K_{\rm LQG}$, from Lemma \ref{prp1}, we obtain $\rmsJ J(\rmsK {\bm K}_{\rm LQG}) = J(K_{\rm LQG})$. As $\rmsK {\bm K}_{\rm LQG}$ is globally optimal, there is no $K$ such that $J(K) < J(K_{\rm LQG})$. Thus, $J(K_{\rm LQG}) = J(K_{\star})$ holds. This shows \eqref{opteq1}. Next, we show \eqref{opteq2}. Let $\rmsK {\bm K}^{L}_{\star}$ be an $L$-dimensional optimal solution to \eqref{defJ_orig} in the form of \eqref{dyn_K}. To this end, without loss of generality, we assume that $\rmsK {\bm K}^{L}_{\star}$ is observable. If not, by replacing the following proof with the minimal realization of $\rmsK {\bm K}^{L}_{\star}$, we have the same claim. Then, since $\rmsK {\bm K}^{L}_{\star}$ is an $L$-dimensional system with single output, $\rmsK {\bm K}^{L}_{\star}$ is $L$-step observable. Thus, Claim a) in Lemma \ref{lem2} follows. Therefore, there exists $K \in \mathbb R^{n_u \times n_z}$ in \eqref{IOH_law} being equivalent to $\rmsK {\bm K}^{L}_{\star}$. Thus, $J(K_{\star}) \leq \rmsJ J(\rmsK {\bm K}^{L}_{\star})$. As $\rmsK {\bm K}^{L}_{\star}$ is globally optimal to \eqref{defJ_orig} in the form of \eqref{dyn_K}, there is no $K$ such that $J(K) < \rmsJ J(\rmsK {\bm K}^{L}_{\star})$. Therefore, \eqref{opteq2} follows. This completes the claim. \qed

\section{Derivation of \eqref{Jerr}}\label{prf_Jerr}
For ${\bm \Sigma}^{\epsilon}$ in \eqref{def_sigma_relaxed}, define $p^{\epsilon} := [y^{\epsilon\top}Q^{\frac{1}{2}}, u^{\top}R^{\frac{1}{2}}]^{\top}$. Then, the closed-loop $({\bm \Sigma}^{\epsilon}, {\bm K}^{\epsilon})$ with $p^{\epsilon}$ can be described as 
\begin{align}\label{cls}
\hspace{-4mm} ({\bm \Sigma}^{\epsilon}, {\bm K}^{\epsilon}): 
    \begin{cases}\hspace{-1pt}
    \hspace{-2mm}&h^{\epsilon}(t+1) = \Theta_K h^{\epsilon}(t) + \Pi_d d(t) + \delta (t)\\
    \hspace{-2mm}&p^{\epsilon}(t) = \Omega_K h^{\epsilon}(t) + \Xi_d d(t),
\end{cases}
\end{align}
where $\Omega_K$ is defined in \eqref{Jerr} and $\Xi_d := [\Upsilon^{\top}Q^{\frac{1}{2}}, 0]^{\top}$. Since $y^{\epsilon\top}(t)Qy^{\epsilon}(t) + u^{\top}(t)Ru(t) = \|p^{\epsilon}(t)\|^2$, from the $H_2$-optimal control theory\cite{chen2012optimal}, $J^{\epsilon}$ subject to \eqref{cls} can be described as 
\begin{align}
    J^{\epsilon}(K) &= \|\Omega_K ({\sf z}I - \Theta_K)^{-1} [\Pi_d V^{\frac{1}{2}}_d, \sqrt{\epsilon}I] + [\Xi_d V^{\frac{1}{2}}_d,0] \|_{H_2}^2 \nonumber \\
    & = J(K) + {\epsilon} \|\Omega_K ({\sf z}I - \Theta_K)^{-1}\|_{H_2}^2,  
\end{align}
where we used the fact $\|[G_1,G_2]\|_{H_2}^2 = \|G_1\|_{H_2}^2 + \|G_2\|_{H_2}^2$. Therefore, \eqref{Jerr} follows. \qed

\section{Proof of Lemma \ref{lem_nablaJ}}\label{prf_lem_nablaJ}


According to $H_2$-optimal control theory \cite{soderstrom2002discrete}, $J^{\epsilon}$ subject to \eqref{cls} can be written as
\begin{equation}\label{Je_initial}
J^{\epsilon}(K) = \tr(\Phi_KV_h^{\epsilon}) + {\rm c}
\end{equation}
where $\Phi_K \geq 0$ is defined in \eqref{lyap}, ${\rm c} := \tr(\Xi_dV_d\Xi_d^{\top})$ is a constant independent of $K$, and $V_h^{\epsilon} := \Pi_d V_d \Pi_d^{\top}+{\epsilon}I$. Let $K' := K + \Delta K$, and $\Phi_{K'} \geq 0$ be the solution to 
\begin{align}
    \Theta_{K'}^{\top} \Phi_{K'} \Theta_{K'} - \Phi_{K'} + \Psi^{\top}Q\Psi  + \Gamma^{\top}{K'}^{\top}R{K'}\Gamma = 0. \label{lyap_prime}
\end{align}
Denote $\Phi_{K'} =: \Phi_K + \Delta \Phi$. By subtracting LHS of \eqref{lyap_prime} from LHS of \eqref{lyap}, we have
\begin{align}
    \Theta_K^{\top}\Delta\Phi\Theta_K \h -\h \Delta \Phi \h+ \h {\rm sym}(\Gamma^{\top}\Delta K^{\top}W_K) \h +\h \mathcal O(\Delta^2) = 0, \label{delta_lyap}
\end{align}
where $\mathcal O(\Delta^2)$ is the higher order term with respect to $\Delta K$ and $\Delta \Phi$. This higher-order term is negligible in the limit $\|\Delta K\| \to 0$, as we are interested in the first-order derivative. 
In the sequel, we ignore this term because it is negligible. Multiplying \eqref{delta_lyap} by $Y_K$ from the left and taking the trace on both sides, we have
\begin{align}
    \tr\left(Y_K(\Theta_K^{\top}\Delta\Phi\Theta_K - \Delta \Phi)\right) \h +\h \tr\left(2Y_KW_K^{\top}\Delta K\Gamma\right) = 0, \label{delta_lyap2}  
\end{align}
where we used the fact that $\tr(YX) = \tr(X^{\top}Y)=\tr(YX^{\top})$ holds for any $X$ and a symmetric matrix $Y$. Note that the first term in LHS of \eqref{delta_lyap2} can be written as 
\begin{align}
    \tr\left((\Theta_KY_K\Theta_K^{\top} - Y_K)\Delta\Phi\right) = -\tr\left(\Delta\Phi V_h^{\epsilon}\right), 
\end{align}
where we used \eqref{EKH2}. Since $\tr\left(\Delta\Phi V_h^{\epsilon}\right) = J^{\epsilon}(K')-J^{\epsilon}(K)$, \eqref{delta_lyap2} can be written as 
\begin{align}
    J^{\epsilon}(K')-J^{\epsilon}(K) = \tr\left(
    (2W_KY_K\Gamma)^{\top}\Delta K
    \right).
\end{align}
Therefore, the claim follows. \qed

\section{Proof of Lemma \ref{lem_stab}}\label{prf_lem_stab}
First, we show the sufficiency. Since $\Theta_K$ is Schur stable, the Lyapunov equation \eqref{lyap} admits a unique solution $\Phi_K \geq 0$. Thus, it follows from \eqref{Je_initial} that $J^{\epsilon}(K) < \infty$.

Next, we show the necessity. To this end, we consider a control law 
\begin{align}
    {\bm K}^{\epsilon}_{u}: \quad  u(t) = Kz(t) + \delta_u(t), \quad \delta_u(t) \sim \mathcal N(0,\epsilon I), 
\end{align}
and the closed-loop $({\bm \Sigma}, {\bm K}^{\epsilon}_{u})$ described as 
\begin{align}\label{cls_ue}
\hspace{-4mm} ({\bm \Sigma}, {\bm K}_u^{\epsilon}): 
    \begin{cases}\hspace{-1pt}
    \hspace{-2mm}&h(t+1) = \Theta_K h(t) + \Pi_d d(t) + \Pi_u \delta_u (t)\\
    \hspace{-2mm}&p(t) = \Omega_K h(t) + \Xi_d d(t) + \Xi_u \delta_u(t)\\
    \hspace{-2mm}&z(t) = \Gamma h(t)
\end{cases}
\end{align}
where $\Xi_u := [0, R^{\frac{1}{2}}]^{\top}$, $\Omega_K$ and $\Xi_d$ are defined in \eqref{cls}. Denote $J$ in \eqref{cost} subject to this closed-loop system by $J_u^{\epsilon}$, i.e., 
\begin{align}\label{Jue}
    J_u^{\epsilon}(K) := \lim_{T\to\infty} \frac{1}{T}\mathbb E\left[\sum_{t=L}^{T}\|p(t)\|^2\right] 
\end{align}
where $p$ follows \eqref{cls_ue}. In the sequel, we will show the following two claims: 
\begin{enumerate}
    \item Given $K$ such that $J^{\epsilon}(K) < \infty$, it follows that $J_u^{\epsilon}(K) < \infty$. 
    \item If $J_u^{\epsilon}(K) < \infty$, then $K \in \mathbb K$.
\end{enumerate}
We first show Claim 1). Since $d(t)$ obeys an i.i.d. process, 
\begin{align}
    J^{\epsilon} = \lim_{T\to\infty}\mathbb E[\sum_{t=L}^{T}\|\Omega_Kh^{\epsilon}(t)\|^2/T] + {\rm c} \label{opm}
\end{align}
where ${\rm c} := \tr(\Xi_dV_d\Xi_d^{\top})$. By computing the first term in the RHS of \eqref{opm}, we have 
\begin{align}
    J^{\epsilon} < \infty \Rightarrow \tr\left(\lim_{T\to\infty}\left(\frac{1}{T}\sum_{k=1}^{T}\mathcal O_k^{\top}\mathcal O_k\right)V_h^{\epsilon}\right) < \infty, \label{Jeupper}
\end{align}
where $V_h^{\epsilon}$ is defined in \eqref{Je_initial} and $\mathcal O_k := \mathcal O_k(\Theta_K, \Omega_K)$. On the other hand, $J_u^{\epsilon}$ in \eqref{Jue} can be written as 
\begin{align}
    J_u^{\epsilon} &= \tr\left(\lim_{T\to\infty}\left(\frac{1}{T}\sum_{k=1}^{T}\mathcal O_k^{\top}\mathcal O_k\right)(\Pi_dV_d\Pi_d^{\top} + \epsilon \Pi_u\Pi_u^{\top})\right) \nonumber \\
    & + \lim_{T\to\infty}\frac{1}{T}\tr\left(\mathcal O_{T+1}\mathbb E[h(L)h(L)^{\top}]\mathcal O_{T+1}^{\top}\right) + \hat{\rm c}, \label{Jue_alter}
\end{align}
where $\hat{\rm c} := {\rm c} + \epsilon \tr(R)$. 
Note that $V_h^{\epsilon} \geq \Pi_dV_d\Pi_d^{\top} + \epsilon \Pi_u\Pi_u^{\top}$. Thus, \eqref{Jeupper} yields that the first term in RHS of \eqref{Jue_alter} is bounded. Furthermore, since $V_h^{\epsilon} > 0$,  \eqref{Jeupper} yields that $\lim_{T \to\infty}\mathcal O_{T}\mathcal O_{T}^{\top}/T = 0$. Therefore, \eqref{Jeupper} yields that $J_{u}^{\epsilon} < \infty$, which shows Claim 1). 

Next, we show Claim 2). To this end, we introduce the following lemma. 

\begin{lem}\label{J_lemma}
Consider $L \in \mathbb N$, $L$-step observable $\rms{\bm \Sigma}$ in \eqref{hat Sigma}, and $J_u^{\epsilon}$ in \eqref{Jue}. Suppose $J_u^{\epsilon}(K) < \infty$. Then
\begin{align}
    J_u^{\epsilon}(K) = \lim_{T \rightarrow \infty} \frac{1}{T} \mathbb E
    \left[\sum_{t=L}^{T}z^{\top}(t)S z(t) / L\right] + \epsilon\tr(R), 
\end{align}
where
 \begin{align}\label{defGss}
 S := {\rm diag}\left(I_L \otimes R, I_L \otimes Q\right) > 0  
 \end{align}
and $\otimes$ denotes the Kronecker product.
\end{lem}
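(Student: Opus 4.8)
The plan is to re-express the ergodic cost $J_u^\epsilon$, presently built from the stage cost $\|p(t)\|^2=y^\top(t)Qy(t)+u^\top(t)Ru(t)$, as an ergodic average of the single quadratic form $z^\top(t)Sz(t)$, which is positive definite in $z$ because $S>0$; this is exactly what will later let the enclosing argument detect the whole IOH state (and, ultimately, force $\Theta_K$ to be Schur). The elementary identity I would start from is
\[
 z^\top(t)Sz(t)=\sum_{j=t-L}^{t-1}\bigl((Kz(j))^\top R\,(Kz(j))+y^\top(j)Qy(j)\bigr),
\]
which follows from $S={\rm diag}(I_L\otimes R,\,I_L\otimes Q)$ together with the block structure of $z(t)$, whose input and output portions collect the $L$ past controller actions $Kz(t-L),\dots,Kz(t-1)$ and the $L$ past outputs $y(t-L),\dots,y(t-1)$: the Kronecker factors $I_L\otimes R$ and $I_L\otimes Q$ apply $R$ and $Q$ blockwise to these samples. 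Thus $\tfrac1L z^\top(t)Sz(t)$ is exactly the length-$L$ moving average of the per-step quantity $\hat g(j):=(Kz(j))^\top R\,(Kz(j))+y^\top(j)Qy(j)$.

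Next I would identify the long-run average of this moving average with the long-run average of $\hat g$. Interchanging the two finite sums, each $\hat g(j)$ with $j$ bounded away from the endpoints is counted exactly $L$ times, giving
\[
 \frac1T\sum_{t=L}^{T}\frac1L\,\mathbb E[z^\top(t)Sz(t)]=\frac1T\sum_{j}\mathbb E[\hat g(j)]+r_T,
\]
where $r_T$ gathers the $O(L)$ index windows near $j=0$ and near $j=T$ that carry anomalous weights. The early-time terms form a fixed finite sum divided by $T$ and vanish; the terms near $j=T$ must be shown negligible using only $J_u^\epsilon(K)<\infty$, exactly as in the surrounding argument, where finiteness of the cost yields $\lim_{T}\mathcal O_{T}\mathcal O_{T}^\top/T=0$ and hence $\tfrac1T\,\mathbb E[\hat g(T)]\to0$. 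Consequently $r_T\to0$.

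Finally I would recover the additive constant $\epsilon\tr(R)$ from the stage cost of $J_u^\epsilon$ itself. Writing the realized input as $u(t)=Kz(t)+\delta_u(t)$ with $\delta_u(t)\sim\mathcal N(0,\epsilon I)$ independent of $z(t)$ (which is measurable with respect to the noises up to time $t-1$), the cross term drops in expectation and $\mathbb E[u^\top(t)Ru(t)]=\mathbb E[(Kz(t))^\top R\,(Kz(t))]+\epsilon\tr(R)$, so that $J_u^\epsilon(K)=\lim_T\tfrac1T\sum_{t=L}^{T}\mathbb E[\hat g(t)]+\epsilon\tr(R)$; combining this with the previous two steps yields the claim. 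I expect the principal difficulty to be the boundary bookkeeping of the second step, together with the modeling point underlying the first: because the lemma deliberately does not presuppose $K\in\mathbb K$---establishing membership in $\mathbb K$ is the very purpose of the enclosing argument---one cannot invoke stationarity of $z$, so the decay of the end-window contribution must be wrung out of $J_u^\epsilon(K)<\infty$ alone via the averaged-observability estimate, and one must track carefully which signals are recorded in the IOH so that the probing noise $\delta_u$ contributes to $J_u^\epsilon$ precisely the single constant $\epsilon\tr(R)$ and nothing more.
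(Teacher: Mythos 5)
Your proposal is correct and takes essentially the same route as the paper's own proof, which likewise first extracts the constant via $\mathbb E[\|p(t)\|^2]=\mathbb E[y^{\top}(t)Qy(t)+u^{\top}(t)Ru(t)]+\epsilon\tr(R)$ (the cross term vanishing by independence of $\delta_u(t)$ from the history), then uses exactly your block-structure identity in the form \eqref{J_alter}, writing $z^{\top}(kL-j)Sz(kL-j)$ as a length-$L$ window sum of the stage cost $r(t)$, and finally identifies the two long-run averages. The only difference is bookkeeping in that last step: where you interchange the double sum directly and argue the end-window remainder $r_T$ vanishes from $J_u^{\epsilon}(K)<\infty$, the paper sums the window identity along the $L$ residue classes $t=kL-j$, $j\in\{1,\dots,L\}$, and discards the same boundary contributions using nonnegativity of $r(t)$, finiteness of the cost, and boundedness of $\{u,y\}$ on $t<L$ --- mathematically the same Ces\`aro argument.
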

\begin{proof}
From the definition of $p(t)$, we have $\mathbb E[\|p(t)\|^2] = \mathbb E[y^{\top}(t)Qy(t)+u^{\top}(t)Ru(t)] + \epsilon\tr(R)$. Therefore, $J_u^{\epsilon}(K) = J_u(K) + \epsilon\tr(R)$ with 
\begin{align}
    J_u(K) := \lim_{T \rightarrow \infty} \frac{1}{T}\mathbb E\left[\sum_{t=L}^{T}r(t)\right], \quad  r(t) := y^{\top}(t)Qy(t) + u^{\top}(t)Ru(t).
\end{align}
Since $r(t) \geq 0$ and $J_u^{\epsilon}(K)$ is bounded, by letting $T = \tau L$, we have $J_u(K) = \lim_{\tau \rightarrow \infty} \frac{1}{\tau L}\mathbb E\left[\sum_{t=L}^{\tau L - j-1}r(t)\right]$ for $j \in \{1, \ldots, L\}$. Note that
 \begin{align}
     \sum_{t=(k-1)L}^{kL-1} r(t-j)  = z^{\top}(kL-j)S z(kL-j) \label{J_alter}
 \end{align}
  holds for $k = 2,3,\cdots$, and $j = 1, \ldots, L$. 
  Therefore, we have
  \begin{align}
      \sum_{t=L-j}^{L-1} r(t) + \sum_{t=L}^{\tau L- j - 1} r(t) = \sum_{k=2}^{\tau} z^{\top}(kL-j)Sz(kL-j)
  \end{align}
  for $j \in \{1,\ldots,L\}$. Furthermore, $y(t)$ and $u(t)$ are bounded for any $t = 0, \ldots, L-1$. Therefore, we have 
\begin{align}
    \lim_{\tau \rightarrow \infty}\frac{1}{\tau L}\mathbb E\left[\sum_{t=L}^{\tau L- j - 1} r(t)\right] &= \lim_{\tau \rightarrow \infty} \frac{1}{\tau L} \mathbb E
    \left[\sum_{k=2}^{\tau}z^{\top}(kL-j)S z(kL-j)\right]
\end{align}
for $j \in \{1,\ldots,L\}$. By summing up this equality for $j = 1, \ldots, L$, we obtain
$L J_u(K) = \lim_{T \rightarrow \infty} \frac{1}{T} \mathbb E \left[\sum_{t=L}^{T}z^{\top}(t)S z(t)\right]. $
This completes the proof. 
\end{proof}
From this lemma, $J_u^{\epsilon} < \infty$ yields $\lim_{T \rightarrow \infty} \mathbb E[\|z(T)\|^2/T] = 0$. Furthermore, since $h = [z^{\top}, e^{\top}]^{\top}$, the dynamics of $e(t)$ in \eqref{cls_ue} can be written as $e(t+1) = \Theta_{22}e(t) + \Pi_{de}d(t)$, where $\Theta_{22} := {\rm diag}(J_{n_w},J_{n_v}) \in \mathbb R^{n_e \times n_e}$ and $\Pi_{de} := {\rm diag}(E_{n_w}, E_{n_v})\in \mathbb R^{n_e \times (n_w + n_v)}$ where $J_{n}$ and $E_n$ are defined in Lemma \ref{sys_lem}. Note that this dynamics is independent from $K$, and $\Theta_{22}$ is Schur stable. 
Thus, $\lim_{T \rightarrow \infty} \mathbb E[\|e(T)\|^2/T] = 0$ follows. Therefore, we have 
\begin{align}\label{hstable}
    \lim_{T \rightarrow \infty} \mathbb E[\|h(T)\|^2/T] = 0. 
\end{align}
This does not immediately yield that $\Theta_K$ is Schur stable because $h(t)$ is excited only by $d(t)$ and $\delta_u(t)$. In other words, for showing the stability of $\Theta_K$ from \eqref{hstable}, it suffices to show that the unreachable subspace of \eqref{cls_ue} is stable, i.e., the pair $(\Theta_K, [\Pi_dV_d^{\frac{1}{2}}, \Pi_u])$ is stabilizable. Since the reachability is invariant under the (partial) state-feedback control law\footnote{
Let the left-hand side (resp. right-hand side) of \eqref{image} be denoted by $\mathscr{R}_K$ (resp. $\mathscr{R}$). To show $\mathscr{R}_K = \mathscr{R}$, we prove both inclusions. $(\Rightarrow)$ Suppose $\bar{h} \in \mathscr{R}$. Then there exists a pair of input sequences $\{u(t), d(t)\}_{t=L}^T$ such that the open-loop system ${\bm \Sigma}$ satisfies $h(T) = \bar{h}$. Now consider the closed-loop system \eqref{cls_ue} with $\delta_u(t) = u(t) - K\Gamma h(t)$. By applying $\{\delta_u(t), d(t)\}_{t=L}^T$ to the closed-loop system, the same state trajectory is reproduced, implying $\bar{h} \in \mathscr{R}_K$. $(\Leftarrow)$ Conversely, for any $\bar{h} \in \mathscr{R}_K$, a similar construction shows that the open-loop system can reproduce the same trajectory of the closed-loop system. Thus, $\bar{h} \in \mathscr{R}$. Therefore, \eqref{image} holds.
}, we have
\begin{align}
    \im \mathcal R_{n_h}(\Theta_K, [\Pi_dV_d^{\frac{1}{2}}, \Pi_u]) = \im \mathcal R_{n_h}(\Theta, [\Pi_dV_d^{\frac{1}{2}}, \Pi_u]). \label{image}
\end{align}
Therefore, the stabilizability of $(\Theta_K, [\Pi_dV_d^{\frac{1}{2}}, \Pi_u])$ is equivalent to that $(\Theta, [\Pi_dV_d^{\frac{1}{2}}, \Pi_u])$ is stabilizable, which is satisfied because $\mathbb K \not=\emptyset$ from Assumption \ref{ass_noemp}. Therefore, \eqref{hstable} yields that $\Theta_K$ is Schur stable. This completes the proof. 
\qed

\begin{rem}\label{rem_Qdefinite}
Since $Q \geq 0$ yields $S \geq 0$, the condition $J_u^{\epsilon}(K) < \infty$ does not necessarily imply $\lim_{T \rightarrow \infty} \mathbb{E}[\|z(T)\|^2/T] = 0$, and thus the necessity part of Lemma 5 cannot be theoretically guaranteed. To avoid this issue without assuming the detectability of $(\Theta_K, \Psi)$, we assume $Q > 0$. 
\end{rem}

\section{Proof of Lemma \ref{lem_coer}} \label{prf_lem_coer}
\subsection{$J^{\epsilon}(K) \to \infty$ as $K \to \overline{K}\in\partial\mathbb K$}
We first show $\lim_{i \to \infty} J^{\epsilon}(K_i) = \infty$ for any sequence $K_i\to\overline K\in\partial\mathbb K$. From \eqref{Je_initial} and the fact that $\sigma_{\rm min}(V_h^{\epsilon}) \geq \epsilon >0$, we have $J^{\epsilon}(K) \geq \epsilon \tr(\Phi_K)$ where $\Phi_K$ is defined in \eqref{lyap}. Thus, it suffices to show that $\tr(\Phi_{K_i}) \to \infty$ as $K_i \to \overline{K}$ for any $\overline{K} \in \partial \mathbb{K}$. To this end, for each $s \in \{1, \dots, w\}$ we define the eigenprojection
\begin{align}
    P^{(s)}_K := \frac{1}{2\pi {\mathrm i}}\oint_{\mathcal C^{(s)}} (\zeta I-\Theta_K)^{-1}\,d\zeta, 
\end{align}
where ${\mathrm i}$ is the imaginary unit and $\mathcal C^{(s)}$ is the curve defined in Assumption \ref{ass_NOBIF}. Note that $P^{(s)}_K$ is a projection matrix satisfying $\rank P^{(s)}_K = m^{(s)}$. In this proof, we use the following notation: 
For $a_{ij}\in\mathbb R$ and $i,j \in \{0,\dots,m-1\}$, let $[a_{ij}]_{ij} \in \mathbb R^{m \times m}$ denote the matrix whose $(i+1,j+1)$-th entry is $a_{ij}$. For $A \in \mathbb R^{m \times m}$ and $k \in \{0, \dots, m-1\}$, ${\rm diag}_k(A) \in \mathbb R$ denotes its $(k+1)$-th diagonal entry. Throughout this proof, assume $|\lambda_K^{(s)}|\not=0$ for any $K \in \mathcal B_{\eta^{(s)}}(\overline{K})$ without loss of generality. Under these settings, we will prove $\tr(\Phi_{K_i}) \to \infty$ by the following steps. 

\begin{itemize}
    \item[(a)] On $\mathcal B_{\eta^{(s)}}(\overline{K})$, we show that the maps $K \mapsto P^{(s)}_K, \lambda^{(s)}_K, {\sf P}^{(s)}_K := P_K^{(s)}P_K^{(s)\dagger}$ and $N_K^{(s)} := (\Theta_K - \lambda_K^{(s)}I)P^{(s)}_K$ are continuous. Thus, so is 
    \begin{align}\label{def_Aks}
        A_k^{(s)}(K) := \Omega_K (N^{(s)}_K)^{k}{\sf P}_K^{(s)}, \quad \forall k \in \{0,\dots,m^{(s)}-1\}. 
    \end{align}

    \item[(b)] For $K \in \mathcal B^{\rm stab}_{\eta^{(s)}}(\overline{K}) := \mathcal B_{\eta^{(s)}}(\overline{K}) \cap \mathbb K$, we show 
    \begin{align}
    \hspace{-8mm}\tr(\Phi_K) &\geq \sum_{t=0}^{\infty}\left\|\sum_{k=0}^t \binom{t}{k}(\lambda^{(s)}_K)^{t-k}A_k^{(s)}(K)\right\|_F^2 \label{lb01} \\
    &\hspace{-8mm} \geq \frac{1}{\alpha_k(|\lambda_K^{(s)}|)} \|A_k^{(s)}(K)\|_F^2,\quad  k\in\{0,\ldots,m^{(s)}-1\} \label{lb02}
\end{align}
where $S(r) := \left[\sum_{t=0}^{\infty}r^{2t - (i+j)} \binom{t}{i} \binom{t}{j}\right]_{ij} \in \mathbb R^{m^{(s)} \times m^{(s)}}$ is positive definite for any $r \in (0,1)$ and $\alpha_k(r) := {\rm diag}_k(S(r)^{-1})$. 

\item[(c)] We prove $\lim_{r \to 1} 1/\alpha_k(r) = \infty$ for any $k \in \{0, \ldots, m^{(s)}-1\}$. 

\item[(d)] We show the existence of $s_{\star} \in \{1,\dots,w\}$ and $k_{\star} \in \{0, \ldots, m^{(s_{\star})}-1\}$ such that $\|A_{k_{\star}}^{(s_{\star})}(\overline{K})\|_F =: d >0$. 

\item[(e)] We prove $\tr(\Phi_{K_i}) \to \infty$. 
\end{itemize}
Note that in Steps (a)-(d), the index $s \in \{1, \dots, w\}$ is arbitrary but fixed. Thus, in the sequel we omit the superscript $^{(s)}$ when no confusion occurs. Moreover, for simplifying the notation, we denote $\mathcal B_\eta(\overline K)$ and $\mathcal B^{\rm stab}_\eta(\overline K)$  as $\mathcal B$ and $\mathcal B^{\rm stab}$, respectively.  

 {\bf [Step (a)]:}~ First, we show the continuity of $P_K$. Let $\mathcal R_K(\zeta):=(\zeta I-\Theta_K)^{-1}$. 
By Assumption~\ref{ass_NOBIF}, $\mathcal C$ lies in the resolvent set of $\Theta_K$ for all $K \in \mathcal B$. Since $\mathcal R_K^{-1}(\zeta)(\mathcal R_K(\zeta) - \mathcal R_{K'}(\zeta))\mathcal R_{K'}^{-1}(\zeta) = \Theta_K-\Theta_{K'}$, we obtain $P_K-P_{K'} =\frac{1}{2\pi {\mathrm i}}\oint_{\mathcal C}\mathcal R_K(\zeta)\,(\Theta_K-\Theta_{K'})\,\mathcal R_{K'}(\zeta)\,d\zeta$ for $K,K'\in\mathcal B$. 
Parameterize $\mathcal C$ by a $C^1$ map $\tau:[0,1]\to\mathbb C$, and write its arc length as
$a:=\int_0^1|d\tau/dt|dt$. For a matrix-valued function $M(\zeta)$, it follows that $\|\oint_{\mathcal C}M(\zeta)\,d\zeta\|
=\|\int_0^1 M(\tau(t))\,\tau'(t)\,dt\|
\leq a\sup_{\zeta\in\mathcal C}\|M(\zeta)\|$. Hence
\[
\|P_K-P_{K'}\|
\le \frac{a}{2\pi}\,
\sup_{\zeta\in\mathcal C}\|\mathcal R_K(\zeta)\|\;
\|\Theta_K-\Theta_{K'}\|\;
\sup_{\zeta\in\mathcal C}\|\mathcal R_{K'}(\zeta)\|.
\]
Since $(\zeta,K)\mapsto \mathcal R_K(\zeta)$ is continuous on the compact set
$\mathcal C\times\overline{\mathcal B_\eta(\overline K)}$, there exists $b:=\sup_{\zeta\in\mathcal C,\;K\in\mathcal B}\|\mathcal R_K(\zeta)\|<\infty$. 
Therefore,
\[
\|P_K-P_{K'}\| \le \rho \|K - K'\|, \quad \rho := \frac{ab^2}{2\pi}\|\Gamma\|~\|\Pi\|, 
\]
thus $K\mapsto P_K$ is continuous on $\mathcal B$. 
Next, we prove the continuity of ${\sf P}_K:=P_K P_K^{\dagger}$ on $\mathcal B$.
By Assumption~\ref{ass_NOBIF}, $\operatorname{rank} P_K$ is constant on $\mathcal B$, and we have already shown that $K\mapsto P_K$ is continuous.
Hence, $K\mapsto P_K^{\dagger}$ is continuous \cite{xu2020perturbation}, and therefore so is $K\mapsto{\sf P}_K$. Moreover, since $\mathcal C$ encloses a single eigenvalue with fixed multiplicity $m$, $\lambda_K$ is continuous. Consequently $N_K$ and $A_k(K)$ in~\eqref{def_Aks} are continuous on $\mathcal B$. 

 {\bf [Step (b)]:}~ Since $K \in \mathcal B^{\rm stab}$, $|\lambda_K| \in (0,1)$ holds. Note that $P_K^2 = P_K$, $N_KP_K = P_KN_K$, and $P_K {\sf P}_K = {\sf P}_K$. Therefore,  
 $\Theta_K^t {\sf P}_K = \sum_{k=0}^t \binom{t}{k}\lambda_K^{t-k}N_K^{k}{\sf P}_K$. Thus, ${\rm tr}(\Phi_K) \geq \tr \left({\sf P}_K^{*}\Phi_K {\sf P}_K\right) = \sum_{t=0}^{\infty}\|\Omega_K\Theta_K^t {\sf P}_K\|_F^2 = \sum_{t=0}^{\infty} \|\sum_{k=0}^{t}\binom{t}{k} \lambda_K^{t-k} A_k(K)\|_F^2$, which yields \eqref{lb01}. 
 
 Next, we show \eqref{lb02}. Let $S_{ij}(r) \in \mathbb R$ be the $(i+1, j+1)$-th entry of $S(r)$, $\lambda_K =: |\lambda_K|e^{{\mathrm i}\theta_K}$ and $\mathcal A_k(K) := e^{-{\mathrm i}k\theta_K}A_k(K)$. Then
 \begin{align}
     \HH {\tr }(\Phi_K) \geq \sum_{j=0}^{m-1}\sum_{i=0}^{m-1} S_{ij}(|\lambda_K|)\tr(\mathcal A_i^*\mathcal A_j) = \tr (\mathcal A^*(S(|\lambda_K|)\otimes I)\mathcal A) \label{tttt}
 \end{align}
 where $\mathcal A := [\mathcal A_0^{\top}, \dots, \mathcal A_{m-1}^{\top}]^{\top}$. 
 
 As a first step to prove \eqref{lb02}, we show $S(r) > 0$ for any $r \in (0,1)$.  
 Let
 \[
 s_t(r) := \left[\binom{t}{0}r^t, \ldots, \binom{t}{m-1}r^{t-(m-1)}\right]^{\top} \in \mathbb R^m. 
 \]
 Then, $S(r) = \sum_{t=0}^{\infty}s_t(r)s^{\top}_t(r)$. Let $W(r) := \left[s_0(r), \dots, s_{m-1}(r)\right] \in \mathbb R^{m \times m}$. Then, $\sum_{t=0}^{m-1}s_t(r)s_t^{\top}(r) = W(r)W^{\top}(r)$. Since $\binom{t}{i}=0$ for $i>t$, $W(r)$ is upper triangular matrix whose every diagonal entry is $1$ regardless $r$. Therefore, ${\rm det}(W(r)W(r)^{\top}) = {\rm det}^2(W(r))=1$. Hence, $S(r) \geq W(r)W(r)^{\top} > 0$. 

 Now we prove \eqref{lb02}. For simplifying the notation, we denote $S(|\lambda_K|)$ as $S$. 
Let $\Pi_k := [e_k, e_0, \dots, e_{k-1}, e_{k+1}, \dots, e_{m-1}]\in\mathbb{R}^{m\times m}$ for $k \in \{0, \dots,m-1\}$ where $e_k$ is the $(k+1)$-th column vector of $I_m$. Let $\widetilde S := \Pi_k^\top S\Pi_k$ and $\widetilde{\mathcal A} := (\Pi_k^{\top}\otimes I) {\mathcal A} = [{\mathcal A}_k^{\top}, \overline{{\mathcal A}}^{\top}]^{\top}$ where $\overline{X}\in\mathbb{C}^{(m-1)n\times n}$ stacks all remaining blocks. Note that $\widetilde{S} > 0$. Partition $\widetilde{S}$ as $\widetilde{S} =: \left[\begin{array}{cc} \widetilde{S}_{11} & \widetilde{S}_{21}^{\top} \\ \widetilde{S}_{21} & \widetilde{S}_{22}\end{array}\right]$ where $\widetilde{S}_{11} \in \mathbb R$. Then, 
 \begin{align}
     &\tr \left({\mathcal A}^*(S \otimes I){\mathcal A}\right) = \tr \left(\widetilde{{\mathcal A}}^*(\Pi_k^{\top} S \Pi_k \otimes I)\widetilde{{\mathcal A}}\right) = \tr \left(\widetilde{{\mathcal A}}^*(\widetilde{S} \otimes I)\widetilde{{\mathcal A}}\right) \nonumber \\
     &=\widetilde{S}_{11}\|{\mathcal A}_k\|_F^2 - \tr({\mathcal A}_k^*\widetilde{S}_{21}^{\top}\widetilde{S}_{22}^{-1}\widetilde{S}_{21}{\mathcal A}_k)  \nonumber \\
     &+ \tr\left((\overline{{\mathcal A}}+(\widetilde{S}_{22}^{-1}\widetilde{S}_{21}\otimes I){\mathcal A}_k)^*(\widetilde{S}_{22}\otimes I)(\overline{{\mathcal A}}+(\widetilde{S}_{22}^{-1}\widetilde{S}_{21}\otimes I){\mathcal A}_k)\right) \nonumber \\
     & \geq (\widetilde{S}_{11} - \widetilde{S}_{21}^{\top}\widetilde{S}_{22}^{-1}\widetilde{S}_{21})\|{\mathcal A}_k\|_F^2 = \frac{1}{{\rm diag}_k(S^{-1})}\|{\mathcal A}_k\|_F^2  \label{tttt2}
 \end{align}
 for $k \in \{0, \dots, m-1\}$, where the last equality follows from that $(\widetilde{S}_{11} - \widetilde{S}_{21}^{\top}\widetilde{S}_{22}^{-1}\widetilde{S}_{21}) = {\rm diag}_0(\widetilde{S}^{-1}) = {\rm diag}_k({S}^{-1})$. Since $\|\mathcal A_k\|_F^2 = \|A_k\|_F^2$, we have \eqref{lb02}. 

{\bf [Step (c)]:}~ Since $S(r) > 0$ for any $r \in (0,1)$, we have $\alpha_k(r) > 0$ for any $k \in \{0, \ldots, m-1\}$ and $r \in (0,1)$. Let $D(r) := {\rm diag}((1-r^2)^{\frac{1}{2}}, (1-r^2)^{\frac{3}{2}}, \ldots, (1-r^2)^{m-\frac{1}{2}}) \in \mathbb R^{m \times m}$ and $M(r) := D(r)S(r)D(r)$. If there exist constant $c>0$ and $\underline{\lambda}>0$ such that 
\begin{align}
 M(r) \geq c I_m, ~^\forall r\in(\underline{\lambda}, 1), \label{toshowww}
\end{align}
then, we have $S(r) = D(r)^{-1}M(r)D(r)^{-1} \geq c D(r)^{-2}$. Thus, $S(r)^{-1} \leq c^{-1} {\rm diag}((1-r^2), \ldots, (1-r^2)^{2m-1})$. Multiplying $e_k^{\top}$ and $e_k$ from the left and right yields ${\rm diag}_k(S(r)^{-1}) \leq c^{-1}(1-r^2)^{2k+1}$ for any $k \in \{0,\ldots,m-1\}$. Therefore
\[
 \frac{1}{\alpha_k(r)} \geq \frac{c}{(1-r^2)^{2k+1}}, \quad ^\forall r \in (\underline{\lambda},1)
\]
which yields $\lim_{r \to 1} 1/\alpha_k(r) = \infty$ for any $k \in \{0,\ldots,m-1\}$. Hence, in the remainder of Step (c), it suffices to show \eqref{toshowww}. To this end, we introduce the following lemma: 
\begin{lem}
Let $t\in\mathbb{N}$ and $k\in\{0,\dots,t\}$. Define $(t)_k:=t(t-1)\cdots(t-k+1)$ with $(t)_0:=1$. Then for all $i,j\in\{0,\dots,t\}$,
\begin{align}
(t)_i(t)_j &= \sum_{k=0}^{\min(i,j)} \binom{i}{k}\binom{j}{k}k!(t)_{i+j-k}, \label{falling-factorial-id} \\
\sum_{t=0}^{\infty} (t)_k r^t &= \frac{k!\,r^k}{(1-r)^{k+1}}, \qquad r\in(0,1).
\label{falling-factorial-genfun}
\end{align}
\end{lem}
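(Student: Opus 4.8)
The plan is to prove the two identities separately, treating \eqref{falling-factorial-id} by a combinatorial double-counting argument and \eqref{falling-factorial-genfun} by repeated differentiation of the geometric series.

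For \eqref{falling-factorial-id}, I would first recall that $(t)_k$ equals the number of injective maps from a $k$-element set into $\{1,\dots,t\}$. Fixing an integer $t\ge i+j$, I would then interpret the left-hand side $(t)_i(t)_j$ as the number of ordered pairs $(f,g)$ of injections $f:\{1,\dots,i\}\to\{1,\dots,t\}$ and $g:\{1,\dots,j\}\to\{1,\dots,t\}$. Next I would classify such pairs according to the coincidence set $S:=\{(a,b):f(a)=g(b)\}$; because $f$ and $g$ are injective, each index on either side occurs in at most one element of $S$, so $S$ is a partial matching between $\{1,\dots,i\}$ and $\{1,\dots,j\}$, and I would stratify by its cardinality $k=|S|\in\{0,\dots,\min(i,j)\}$. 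For a fixed $k$, choosing the matched indices on each side and the bijection between them contributes $\binom{i}{k}\binom{j}{k}k!$, while assigning the remaining $i+j-k$ mutually distinct values into $\{1,\dots,t\}$ contributes $(t)_{i+j-k}$; summing over $k$ reproduces the right-hand side. Finally, since both sides are polynomials in $t$ of degree $i+j$ agreeing at the infinitely many integers $t\ge i+j$, they coincide as polynomials, so the identity holds for every $t$ (the falling-factorial convention $(t)_m=0$ for $m>t$ makes the small-$t$ cases consistent as well).

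For \eqref{falling-factorial-genfun}, I would start from $\sum_{t=0}^{\infty}r^t=1/(1-r)$, which converges absolutely and uniformly on compact subsets of $(-1,1)$, so term-by-term differentiation is legitimate on $(0,1)$. Using $\tfrac{d^k}{dr^k}r^t=(t)_k\,r^{t-k}$ (with $(t)_k=0$ for $t<k$, consistent since $r^t$ is then a polynomial of degree below $k$) together with $\tfrac{d^k}{dr^k}(1-r)^{-1}=k!\,(1-r)^{-(k+1)}$, differentiating the series $k$ times gives $\sum_{t=0}^{\infty}(t)_k\,r^{t-k}=k!/(1-r)^{k+1}$, and multiplying through by $r^k$ yields \eqref{falling-factorial-genfun}.

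I expect the combinatorial bookkeeping in \eqref{falling-factorial-id} to be the main obstacle: one must verify carefully that the coincidence set $S$ is exactly a partial matching, that the factor $k!$ correctly counts the bijections between the two matched index subsets, and that the non-coincident images are genuinely distinct so that precisely $i+j-k$ slots are filled in $(t)_{i+j-k}$ ways. The differentiation argument for \eqref{falling-factorial-genfun} is routine once uniform convergence on compacta is invoked.
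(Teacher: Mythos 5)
Your proof is correct, but for identity \eqref{falling-factorial-id} it takes a genuinely different route from the paper. The paper argues algebraically: it multiplies two binomial expansions to get $(1+x)^t(1+y)^t=(1+x+y+xy)^t$, extracts the coefficient of $x^iy^j$ to obtain $\binom{t}{i}\binom{t}{j}=\sum_{p}\binom{t}{p}\binom{p}{i}\binom{i}{j-(p-i)}$, and then converts to falling factorials via the substitution $k=j-(p-i)$ and multiplication by $i!\,j!$. You instead double-count pairs of injections $(f,g)$ stratified by the coincidence set $S=\{(a,b):f(a)=g(b)\}$, and your bookkeeping is sound: injectivity of $f$ and $g$ forces $S$ to be a partial matching, the factor $\binom{i}{k}\binom{j}{k}k!$ correctly enumerates matchings of size $k$, and requiring the coincidence set to be \emph{exactly} $S$ forces all $i+j-k$ remaining images (after identifying matched pairs) to be pairwise distinct, giving $(t)_{i+j-k}$. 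The combinatorial route is more conceptual and sidesteps the delicate index-range manipulations in the paper's coefficient extraction; the paper's route is mechanical and verifiable line by line without constructing a bijection. One small remark: your closing polynomial-identity step (agreement at infinitely many $t\ge i+j$) is valid but unnecessary, since the counting argument already holds verbatim for every nonnegative integer $t$ — both sides vanish in the degenerate cases under the convention $(t)_m=0$ for $m>t$ — and the lemma only asserts the identity for integers $t\ge\max(i,j)$ anyway. For \eqref{falling-factorial-genfun} your argument (term-by-term $k$-fold differentiation of the geometric series, justified inside the radius of convergence, then multiplication by $r^k$) is exactly the paper's proof.
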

\begin{proof}
We first prove \eqref{falling-factorial-id}. Since $\sum_{i=0}^t \binom{t}{i}x^i = (1+x)^t$, we have
\begin{align}
&\left(\sum_{i=0}^t \binom{t}{i}x^i \right)\left(\sum_{j=0}^t \binom{t}{j}y^j \right)
= (1+x+y+xy)^t \nonumber \\ 
&= \sum_{p=0}^t\binom{t}{p}\bigl(y + x(1+y)\bigr)^p = \sum_{p=0}^t \binom{t}{p} \sum_{q=0}^p \binom{p}{q}y^{p-q}(x+xy)^q. \label{stepC02}
\end{align}
To extract the coefficient of $x^iy^j$ in \eqref{stepC02}, note that $x$ appears only from the factor $(x+xy)^q$, hence $q=i$. Substituting $q=i$ yields $\sum_{p=i}^t \binom{t}{p} \binom{p}{i} y^{p-i} x^i(1+y)^i$, where the sum starts at $p=i$ because $\binom{t}{p}=0$ for $p<i$. Expanding $(1+y)^i$ and again comparing the coefficient of $y^j$ gives
\begin{align}
    \binom{t}{i}\binom{t}{j} =\sum_{p=i}^t \binom{t}{p} \binom{p}{i} \binom{i}{j-(p-i)}.\label{stepC03}
\end{align}
Since the last binomial vanishes unless $j-(p-i)\in[0,i]$, the valid range of $p$ is 
$p\in[\max(i,j),i+j]$. Let $k:=j-(p-i)$, whose range is $k\in[0,\min(i,j)]$. After a change of variables we obtain
\begin{align}
    \binom{t}{i}\binom{t}{j} &=\sum_{k=0}^{\min(i,j)} \binom{t}{i+j-k} \binom{i+j-k}{i} \binom{i}{k} \nonumber \\
    &= \sum_{k=0}^{\min(i,j)}  \binom{t}{i+j-k} \frac{(i+j-k)!}{(i-k)!(j-k)!k!}\label{stepC04}
\end{align}
Finally, using $\binom{t}{i}i!=(t)_i$, multiplying both sides by $i!j!$ gives 
\[
(t)_i(t)_j = \sum_{k=0}^{\min(i,j)} (t)_{i+j-k} \frac{i!j!}{(i-k)!(j-k)!k!} = \sum_{k=0}^{\min(i,j)}\binom{i}{k}\binom{j}{k}k!(t)_{i+j-k},
\]
which proves \eqref{falling-factorial-id}. For \eqref{falling-factorial-genfun}, consider $G(r):=\sum_{t=0}^\infty r^t=\frac{1}{1-r}$, which is analytic on $(0,1)$. Differentiating term-wise, we have
\[
\frac{k!}{(1-r)^{k+1}} =\frac{d^k}{d r^k}G(r) = \sum_{t=0}^{\infty} \frac{d^k}{dr^k}r^t = \sum_{t=0}^\infty (t)_k r^{t-k}. 
\]
Multiplying by $r^k$ yields the claim. 
\end{proof}

Let $M_{ij}(r) \in \mathbb R$ denotes the $(i+1,j+1)$-th entry of $M(r)$. By definition, 
\begin{align}
        M_{ij}(r) &= (1-r^2)^{i+\frac{1}{2}}\sum_{t=0}^{\infty}r^{2t-(i+j)}\binom{t}{i}\binom{t}{j}(1-r^2)^{j+\frac{1}{2}} \nonumber \\
        & =  r^{-(i+j)}(1-r^2)^{i+j+1}T_{ij}(r) \label{stepC044}
\end{align}
where 
\begin{align}
&T_{ij}(r) := \sum_{t=0}^{\infty}\binom{t}{i}\binom{t}{j} r^{2t} = \frac{1}{i!j!}\sum_{t=0}^{\infty} (t)_i(t)_jr^{2t} \nonumber \\
&= \frac{1}{i!j!}\sum_{k=0}^{\min(i,j)}\binom{t}{k}\binom{j}{k}k!\sum_{t=0}^{\infty} (t)_{i+j-k} r^{2t} \label{stepC05} \\
&= \frac{1}{i!j!}\sum_{k=0}^{\min(i,j)}\binom{t}{k}\binom{j}{k}k! \frac{(i+j-k)! r^{2(i+j-k)}}{(1-r^2)^{i+j-k+1}}, \label{stepC06}
\end{align} 
where \eqref{falling-factorial-id} and \eqref{falling-factorial-genfun} are used for deriving \eqref{stepC05} and \eqref{stepC06}, respectively. Multiplying $(1-r^2)^{i+j+1}$ in both side, we have $(1-r^2)^{i+j+1}T_{ij}(r) = \frac{1}{i!j!}\sum_{k=0}^{\min(i,j)}f(r; k, i,j)$ where 
\begin{align}
    f(r; k, i,j) := \binom{t}{k}\binom{j}{k}k!(i+j-k)! r^{2(i+j-k)}(1-r^2)^k. 
\end{align}
Note that $\lim_{r \to 1}f(r; k,i,j) = 0$ for any $k \geq 1$, and $f(r; 0,i,j) = (i+j)! r^{2(i+j)}$. Thus, $\lim_{r \to 1}(1-r^2)^{i+j+1}T_{ij}(r) = \binom{i+j}{i}$. Therefore, from \eqref{stepC044}
\begin{align}
\lim_{r \to 1}M(r) = \left[\binom{i+j}{i}\right]_{ij} =: \mathcal M \in \mathbb R^{m \times m}. 
\end{align}
Let $\mathcal L:= \left[\binom{i}{j}\right]_{ij} \in \mathbb R^{m \times m}$. Note that $\mathcal L \mathcal L^{\top} = \left[\sum_{k=0}^{\min(i,j)} \binom{i}{k}\binom{j}{k}\right]_{ij} = \left[\sum_{k=0}^{\min(i,j)} \binom{i}{i-k}\binom{j}{k}\right]_{ij} = \left[\binom{i+j}{i}\right]_{ij}$, where the last equality holds from Vandermonde's Identity. Since $\mathcal L$ is lower triangular matrix whose every diagonal element is $1$. Thus, \(\det\mathcal L=1\). Hence, $\det \mathcal M=\det^2 \mathcal L > 0$, which yields that $\mathcal M > 0$. Since $M(r)$ depends continuously on $r$ and the smallest singular value is a continuous function of the matrix entries, there exist constant $c>0$ and $\underline{\lambda}>0$ such that \eqref{toshowww}. 

{\bf [Step (d)]:}~ Assume to the contrary that $A_k^{(s)}(\overline K)=0$ for all $s\in\{1,\dots,w\}$ and $k \in \{0,\dots,m^{(s)}-1\}$. Then, we have
\begin{align}
\Omega_{\overline K}\Theta_{\overline K}^{\,t}{\sf P}^{(s)}_{\overline{K}}
=\sum_{k=0}^{m^{(s)}-1}\binom{t}{k}(\lambda^{(s)})^{t-k}\Omega_{\overline K}(N^{(s)})^{k}{\sf P}^{(s)}_{\overline{K}} =0, \quad \forall t \geq 0 \label{toshowStepd01}
\end{align}
Let ${\mathcal P}_{\overline{K}} := \sum_{s=1}^w {\sf P}^{(s)}_{\overline{K}}$. By \eqref{toshowStepd01}, $\Omega_{\overline K}\Theta_{\overline K}^{t}{\mathcal P}_{\overline{K}}=0$ for all $t$. Since
$
\Omega_{\overline K}\Theta_{\overline K}^{t}
=\Omega_{\overline K}\Theta_{\overline K}^{t}{\mathcal P}_{\overline{K}} + \Omega_{\overline K}\Theta_{\overline K}^{\,t}(I-{\mathcal P}_{\overline{K}})
$, we have 
\[
\|\Omega_{\overline K}\Theta_{\overline K}^{\,t}\|_F
=\|\Omega_{\overline K}\Theta_{\overline K}^{\,t}(I-{\mathcal P}_{\overline{K}})\|_F
\le \|\Omega_{\overline K}\|\,\|\Theta_{\overline K}^{\,t}(I-{\mathcal P}_{\overline{K}})\|_F.
\]
All eigenvalues of $\Theta_{\overline K}$ projected onto ${\mathcal P}_{\overline{K}}$ lie strictly inside the unit disk, so by the Jordan–form estimate there exist $C>0$, $\rho_\star\in(0,1)$, and an integer $p\ge0$ such that
\[
\|\Theta_{\overline K}^{t}(I-{\mathcal P}_{\overline{K}})\|_F \ \le\ C\,t^{p}\,\rho_\star^{t}, \quad t \geq 0. 
\]
From \eqref{Je_initial}, we have $J^{\epsilon}(K) \leq {\rm c} + \|V_h^{\epsilon}\| \tr(\Phi_K)$. Therefore, 
\begin{align}
J^{\epsilon}(\overline K) &\leq {\rm c} + \|V_h^{\epsilon}\|\sum_{t=0}^\infty \|\Omega_{\overline K}\Theta_{\overline K}^{\,t}\|_F^2 \nonumber \\
 &\leq {\rm c} +\|V_h^{\epsilon}\|\|\Omega_{\overline K}\|^2\,C^2\sum_{t=0}^\infty t^{2p}\rho_\star^{\,2t}\ <\ \infty.
\end{align}
This contradicts Lemma \ref{lem_stab} as $\overline{K} \notin \mathbb K$. Hence there must exist
$s_\star$ and $k_\star\in\{0,\dots,m^{(s_\star)}\!-1\}$ satisfying $\|A_{k_\star}^{(s_\star)}(\overline K)\|_F>0$.

{\bf [Step (e)]:}~Fix $\{s_\star,k_\star\}$ given by Step~(d). As shown in Step (a), $A_{k_\star}^{(s_\star)}(K)$ is continuous on $K \in \mathcal B_{\eta^{(s_{\star})}}(\overline{K})$. Thus, $\lim_{K \to \overline{K}} \|A_{k_\star}^{(s_\star)}(K)\|_F = d >0$ and $\lim_{K \to \overline{K}} |\lambda_K^{(s_{\star})}| = 1$. Let $\{K_i\} \subset \mathcal B_{\eta^{(s_\star)}}^{\rm stab}(\overline{K})$ with $K_i\to\overline K$. Therefore, using the results shown in Steps (b)-(c), we have 
\begin{align}
    \lim_{i \to \infty} \tr(\Phi_{K_i}) \geq \lim_{i \to \infty}\frac{1}{\alpha_{k_{\star}}(|\lambda_{K_i}^{(s_{\star})}|)} \|A_{k_{\star}}^{(s_{\star})}(K_i)\|_F^2 = \infty. \label{toshowe01}
\end{align}
Since \eqref{toshowe01} holds for any $\overline{K} \in \partial \mathbb K$, it follows that $J^\epsilon(K_i)\to\infty$ for any $\{K_i\}\subset\mathbb K$ with $K_i\to\overline K\in\partial\mathbb K$.

\subsection{$J^{\epsilon}(K) \to \infty$ as $\|K\| \to \infty$}
Note that  
   \begin{align}
       J^{\epsilon}(K) = {\rm tr}\left(
       (\Psi^{\top}Q\Psi  + \Gamma^{\top}K^{\top}RK\Gamma)Y_K
       \right) + {\rm c}
   \end{align}
   where $Y_K$ is defined in \eqref{EKH2}, and ${\rm c} \geq 0$ is a constant (independent of $K$) defined in \eqref{Je_initial}. Note that, by definition, $Y_K \geq \Pi_d V_d \Pi_d^{\top}+{\epsilon}I$. Thus, we have 
   $
    \sigma_{\rm min}(Y_K) \geq {\epsilon}.     
   $
   Thus, we have
   \begin{align} \label{Jlowerbound}
       J^{\epsilon}(K) \geq {\epsilon} \tr(\Gamma^{\top}K^{\top}RK\Gamma) = {\epsilon} \tr(RKK^{\top}), 
   \end{align}
   where $\Gamma \Gamma^{\top} = I$ is used for deriving the third equation. Using the formula: $\tr(AB) \geq \sigma_{\rm min}(B)\tr(A)$ holds for any $A \geq 0$ and $B > 0$, \eqref{Jlowerbound} yields 
   \begin{align}
       J^{\epsilon}(K) \geq \sigma_{\rm min}(R){\epsilon} \|K\|_F^2. \label{lowbound}
   \end{align}
   Note that $\|K\|_F \geq \|K\|$ for any matrix $K$. Now consider a sequence $\{K_i\} \subseteq \mathbb{K}$ such that $\|K_i\| \to \infty$. From  \eqref{lowbound}, we have 
\[
\lim_{i \to \infty}J^{\epsilon}(K_i) \geq  \sigma_{\rm min}(R)\epsilon \lim_{i \to \infty}\|K_i\|^2 \to \infty,
\]
which completes the proof.
\qed

\section{Proof of Lemma \ref{lem_hesian}} \label{prf_lem_hesian}
As described in Proof of Lemma \ref{lem_nablaJ}, $J^{\epsilon}$ in \eqref{cost_relax} subject to \eqref{cls} can be equivalently written as \eqref{Je_initial}. Therefore, similarly to deriving the final inequality in page 31 of \cite{bu2019lqr}, for any $K \in \mathbb K_c$ we have that
\begin{align}
   \hspace{0mm}\|\nabla^2 J^{\epsilon}(K)\|\leq 2 a_K \|Y_K\| &+ 4 \left\|\Theta_K Y_K^{\frac{1}{2}} \right\| \underset{\|K'\|_F=1}{\sup} \left\|\Gamma^{\top}K'^{\top}\Pi_u^{\top}X\right\| {\rm tr}(Y_K^{\frac{1}{2}}) \label{l8_pf2},
\end{align}
where $a_K := \|\Pi_u^{\top}\Phi_{K}\Pi_u + R \|$,  $X := (\partial \Phi_{K_{\alpha'}} / \partial \alpha')_{\alpha' =0}$, $K_{\alpha'} := K + \alpha'K'$, $\Phi_K$ and $Y_K$ are defined in \eqref{lyap}-\eqref{EKH2}. It suffices to show that the RHS of \eqref{l8_pf2} is bounded.

For the supremum term in \eqref{l8_pf2}, similarly to the last inequality in page 32 of \cite{bu2019lqr}, it follows that 
\begin{align} 
    X - \Theta_K^{\top}X \Theta_K \leq \Lambda_{K,K'}, \label{XX}
\end{align}
where $\Lambda_{K,K'} := \Phi_K - \Psi^{\top}Q\Psi + \Gamma^{\top}K'^{\top}\Pi_u^{\top}\Phi_K\Pi_uK'\Gamma+\Gamma^{\top}K'^{\top}RK'\Gamma\geq 0$. By using the facts $\|\Pi_u\| = 1$ and $\sup_{\|A\|_F \leq 1}\|AB\| \leq \|B\|$ for any $A$ and $B$, we have
\begin{align}
    \Lambda_{K,K'} \leq \Phi_K - \Psi^{\top}Q\Psi + (\|\Phi_K\|+\|R\|)I =: N_K. 
\end{align}
Note that $N_K \geq 0$.  Because $\Theta_K$ is Schur stable and $\mathbb K_{c}$ is a compact set, there exists $a_1$ such that 
\begin{align}
    \|X\| \leq \left\|\sum_{l=0}^{\infty}(\Theta_K^{\top})^lN_K \Theta_K\right\| \leq a_1 \label{def_a1}
\end{align}
holds for any $K \in \mathbb K_c$. Therefore, 
\begin{align}
\underset{\|K'\|_F=1}{\sup} \left\|\Gamma^{\top}K'^{\top}\Pi_u^{\top}X\right\| \leq a_1 \label{new_expa}.
\end{align} 
Similarly to \eqref{def_a1}, there exist $a_2$ such that 
\begin{align}
    \tr(Y_K) \leq a_2, \label{a2a3}
\end{align}
which yields $\|Y_K\| \leq a_2$. Using this relation, we show an upper bound of $\|\Theta_K Y_K^{1/2}\|$. Note that $\Theta_K Y_K\Theta_K^{\top} \leq Y_K$ follows from the definition of $Y_K$ and that $\Theta_K$ is Schur. Hence, 
\begin{align}\label{boundThKYKh}
   \left\|\Theta_K Y_K^{\frac{1}{2}}\right\| = \left(\|\Theta_K Y_K^{\frac{1}{2}}\|^2 \right)^{\frac{1}{2}} = \|\Theta_K Y_K\Theta_K^{\top}\|^{\frac{1}{2}} \leq \sqrt{a_2}. 
\end{align}
Furthermore, as in the proof of Proposition 7.7 in \cite{bu2019lqr}, we obtain 
\begin{align} \label{boundtrYKh}
{\rm tr}(Y_K^{\frac{1}{2}}) \leq \sqrt{2a_2}. 
\end{align} 
Furthermore, from \eqref{Je_initial} and the fact that $V_h^{\epsilon}$ in \eqref{Je_initial} satisfies ${\epsilon}I \leq V_h^{\epsilon}$, we have $\|\Phi_K\| \leq \tr(\Phi_K) \leq \tr(\Phi_KV_h^{\epsilon})/{\epsilon} = (J^{\epsilon}(K)-{\rm c})/{\epsilon} \leq c/{\epsilon}$. By substituting this, \eqref{new_expa}-\eqref{boundtrYKh} into \eqref{l8_pf2}, we obtain
\begin{align}\label{exact_q}
    \|\nabla^2 J^{\epsilon}(K)\|\leq q := 2(\|R\| + c{\epsilon}^{-1})a_2 + 4\sqrt{2}a_1a_2. 
\end{align}
This completes the claim.   
\qed

\section{Proof of Theorem \ref{linear_convergence}} \label{prf_linear_convergence}
We first show the following lemma: 
\begin{lem}\label{lem10}
    Given $K \in \mathbb K_c$ such that $K \not=\overline{K}$, define $K_{\alpha} := K - \alpha \nabla J^{\epsilon}(K)$. Let $\alpha_{\rm M}:= \max \alpha$ such that $K_{\alpha} \in \mathbb K_c$ for all $\alpha \in [0, \alpha_{\rm M}]$. Then, $\alpha_{\rm M} \geq 2/q$. 
\end{lem}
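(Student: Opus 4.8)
The plan is to combine the quadratic upper bound furnished by the bounded Hessian (Lemma~\ref{lem_hesian}) with the coercivity of $J^{\epsilon}$ (Lemma~\ref{lem_coer}). Write $g := \nabla J^{\epsilon}(K)$. If $g = 0$, then $K_{\alpha} = K \in \mathbb K_c$ for every $\alpha$, so $\alpha_{\rm M} = \infty$ and the claim is immediate; hence I assume $g \neq 0$.

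The first thing I would record is the topology of $\mathbb K_c$. Since $\mathbb K$ in \eqref{def_thetaK} is the set of $K$ rendering $\Theta_K$ Schur stable, it is open; by Lemma~\ref{lem_coer}, $J^{\epsilon}(K) \to \infty$ as $K \to \partial\mathbb K$, so the compact sublevel set $\mathbb K_c$ is bounded away from $\partial\mathbb K$, i.e.\ $\mathbb K_c$ is a compact subset of the \emph{open} set $\mathbb K$. Consequently the only way the segment $\{K_{\alpha'}\}$ can leave $\mathbb K_c$ while remaining in $\mathbb K$ is by crossing the level set $\{K : J^{\epsilon}(K) = c\}$; equivalently, $U := \{K \in \mathbb K : J^{\epsilon}(K) < c\}$ is open and contained in $\mathbb K_c$. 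Because $-g$ is a descent direction ($\langle g, -g\rangle = -\|g\|_F^2 < 0$) and $K \in \mathbb K$ with $\mathbb K$ open, $J^{\epsilon}(K_{\alpha'}) \le J^{\epsilon}(K) \le c$ holds for all sufficiently small $\alpha' > 0$, so $\alpha_{\rm M} > 0$; moreover $\alpha' \mapsto K_{\alpha'}$ is continuous and $\mathbb K_c$ is closed, so $K_{\alpha_{\rm M}} \in \mathbb K_c$ and the entire segment $\{K_{\alpha'} : \alpha' \in [0,\alpha_{\rm M}]\}$ lies in $\mathbb K_c$.

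Next I would argue by contradiction, assuming $\alpha_{\rm M} < 2/q$. Since the segment lies entirely in $\mathbb K_c$, where Lemma~\ref{lem_hesian} gives $\|\nabla^2 J^{\epsilon}\| \le q$, the standard quadratic upper bound (integrating the Hessian along the segment) applies with $x = K$ and $y = K_{\alpha_{\rm M}} = K - \alpha_{\rm M} g$, yielding
\begin{equation}
J^{\epsilon}(K_{\alpha_{\rm M}}) \le J^{\epsilon}(K) - \alpha_{\rm M}\Bigl(1 - \tfrac{q}{2}\alpha_{\rm M}\Bigr)\|g\|_F^2 .
\end{equation}
For $0 < \alpha_{\rm M} < 2/q$ the bracket is strictly positive and $\|g\|_F^2 > 0$, so $J^{\epsilon}(K_{\alpha_{\rm M}}) < J^{\epsilon}(K) \le c$. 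Thus $K_{\alpha_{\rm M}} \in U \subseteq \mathbb K_c$ with $U$ open; by continuity of $\alpha' \mapsto K_{\alpha'}$ there is $\delta > 0$ with $K_{\alpha'} \in U \subseteq \mathbb K_c$ for all $\alpha' \in [\alpha_{\rm M}, \alpha_{\rm M} + \delta]$, contradicting the maximality of $\alpha_{\rm M}$. Hence $\alpha_{\rm M} \ge 2/q$.

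The main obstacle, and the point I would be most careful about, is the justification of the quadratic upper bound: the Hessian bound of Lemma~\ref{lem_hesian} is available only \emph{on} $\mathbb K_c$, so one must know a priori that the whole segment from $K$ to $K_{\alpha_{\rm M}}$ stays inside $\mathbb K_c$ before invoking the bound. This is exactly what the definition of $\alpha_{\rm M}$ guarantees, but it rests on the preceding topological observation that coercivity keeps $\mathbb K_c$ away from $\partial\mathbb K$, so that ``leaving $\mathbb K_c$'' can only mean ``reaching the level $c$''. Without coercivity one would additionally have to rule out the segment drifting toward the stability boundary $\partial\mathbb K$, where neither the Hessian bound nor even finiteness of $J^{\epsilon}$ is available.
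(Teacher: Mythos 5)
Your proposal is correct and follows essentially the same route as the paper's proof: both establish $\alpha_{\rm M}>0$ via the descent direction, apply the quadratic upper bound $J^{\epsilon}(K_{\alpha})-J^{\epsilon}(K)\le -\frac{\alpha(2-q\alpha)}{2}\|\nabla J^{\epsilon}(K)\|_F^2$ (valid because the segment stays in $\mathbb K_c$ where Lemma~\ref{lem_hesian} bounds the Hessian), and derive a contradiction with the maximality of $\alpha_{\rm M}$ under the assumption $\alpha_{\rm M}<2/q$. The only difference is cosmetic: the paper phrases the contradiction as $J^{\epsilon}(K_{\alpha_{\rm M}})=c$ versus $J^{\epsilon}(K_{\alpha_{\rm M}})<c$, while you extend the segment past $\alpha_{\rm M}$ inside the open set $\{J^{\epsilon}<c\}$, and you make explicit the topological fact (coercivity keeps $\mathbb K_c$ away from $\partial\mathbb K$, so exiting $\mathbb K_c$ means crossing the level set) that the paper leaves implicit in its continuity claim.
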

\begin{proof}
    The existence of $\alpha_{\rm M}$ follows from the compactness of $\mathbb K_c$. For the sake of contradiction, suppose $\alpha_{\rm M} < 2/q$. From the continuity of $J^{\epsilon}(K_{\alpha})$ with respect to $\alpha$, it follows that $J^{\epsilon}(K_{\alpha_{\rm M}}) = c$. Moreover, since $-\nabla J^{\epsilon}(K)$ is a descent direction of $J^{\epsilon}(K)$, we have $\alpha_{\rm M} > 0$. Therefore, since $J^{\epsilon}(K_{\alpha}) \leq c$ for any $\alpha \in [0, \alpha_{\rm M}]$, we have 
    \begin{align}
        J^{\epsilon}(K_{\alpha}) \h-\h J^{\epsilon}(K) \h&\leq\h \tr \H \left(\nabla J^{\epsilon}(K)(K_{\alpha}-K)^{\top}\right) \h+\h \frac{q}{2}\|K_{\alpha}-K\|_F^2 \nonumber \\
        &= -\frac{\alpha(2-q\alpha)}{2}\|\nabla J^{\epsilon}(K)\|_F^2 < 0, \label{prrrr}
    \end{align}
    where \eqref{q_def} is used for deriving the first inequality, and the relation $\alpha \leq \alpha_{\rm M} < 2/q$ is used for deriving the last inequality. Therefore, we have $J^{\epsilon}(K_{\alpha}) < J^{\epsilon}(K) \leq c$ for any $\alpha \in [0, \alpha_{\rm M}]$, which contradicts $J^{\epsilon}(K_{\alpha_{\rm M}}) = c$. This completes the proof. 
\end{proof}
We show \eqref{statio_Je}. Based on Lemma \ref{lem10}, it follows that 
\begin{align}
    K_i \in \mathbb K_c \Rightarrow K_{i+1} \in \mathbb K_c, \quad \forall \alpha \in (0, 2/q). 
\end{align}
Note here that this holds if $K_i = \overline{K}$ because then $K_{i+1} = K_i$. By letting $K \leftarrow K_i$ and $K_{\alpha} \leftarrow K_{i+1}$ in \eqref{prrrr}, we obtain
\begin{align}\label{interm}
    J^{\epsilon}(K_{i+1}) \leq J^{\epsilon}(K_i) - \alpha\left(1 - \frac{q\alpha}{2}\right) \|\nabla J^{\epsilon}(K_i)\|_F^2
\end{align}
Thus, as long as $\|\nabla J^{\epsilon}(K_i)\| \not=0$, $J^{\epsilon}(K_{i+1}) < J^{\epsilon}(K_i) \leq c$ if $\alpha \in (0, 2/q)$. From Lemma \ref{lem_stab}, this yields that $\Theta_{K_{i+1}}$ is Schur stable; thus $K_{i+1} \in \mathbb K_c$. Therefore, \eqref{interm} holds for any $i \geq 0$. Summing up \eqref{interm} yields 
\begin{align}
    \alpha\left(1 - \frac{q\alpha}{2}\right) \sum_{i=0}^N\|\nabla J^{\epsilon}(K_i)\|_F^2 \leq J^{\epsilon}(K_0) - J^{\epsilon}(K_{N+1}) < \infty. \nonumber
\end{align}
This yields \eqref{statio_Je}. Finally, we show \eqref{statio_Jori}. To this end, we introduce the following lemma. 
\begin{lem}\label{lem_stationary}
    Consider $J$ in \eqref{cost} and $J^{\epsilon}$ in \eqref{cost_relax}. Let $\overline{K} \in \mathbb K_c$ be a stationary point of $J^{\epsilon}$, i.e., $\nabla J^{\epsilon}(\overline{K}) = 0$. Then, there exists $\beta > 0$ satisfying 
    \begin{align}\label{eps_stationary}
        \|\nabla J(\overline{K})\|_F \leq \beta {\epsilon}.
    \end{align}
\end{lem}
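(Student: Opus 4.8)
The plan is to differentiate the exact cost identity \eqref{Jerr} rather than the costs themselves. Since $\overline K \in \mathbb K_c \subseteq \mathbb K$, the matrix $\Theta_{\overline K}$ is Schur stable, so both $J$ and $\gamma_K = \|\Omega_K({\sf z}I-\Theta_K)^{-1}\|_{H_2}^2$ are smooth functions of $K$ in a neighborhood of $\overline K$ within $\mathbb K$ (each is a trace of a solution of a Lyapunov equation whose coefficient matrix stays Schur). Differentiating \eqref{Jerr} with respect to $K$ therefore gives the exact relation
\[
\nabla J^{\epsilon}(K) = \nabla J(K) + \epsilon\, \nabla \gamma_K, \qquad K \in \mathbb K.
\]

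First I would instantiate this at the stationary point. Using $\nabla J^{\epsilon}(\overline K) = 0$ yields $\nabla J(\overline K) = -\epsilon\, \nabla \gamma_{\overline K}$, so that $\|\nabla J(\overline K)\|_F = \epsilon\,\|\nabla \gamma_{\overline K}\|_F$. It therefore remains only to bound $\|\nabla \gamma_{\overline K}\|_F$ by a constant that does not depend on $\overline K$ or $\epsilon$.

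To produce such a constant I would make $\nabla \gamma_K$ explicit and then invoke compactness. Recalling from \eqref{Je_initial} that $\gamma_K = \tr(\Phi_K)$, the same perturbation computation as in the proof of Lemma~\ref{lem_nablaJ}, but with the Gramian source term set to the identity rather than $V_h^{\epsilon}$, gives $\nabla \gamma_K = 2 W_K \tilde Y_K \Gamma^{\top}$, where $W_K$ is as in \eqref{EKH1} and $\tilde Y_K$ solves $\Theta_K \tilde Y_K \Theta_K^{\top} - \tilde Y_K + I = 0$; equivalently, this is just the $\epsilon$-derivative piece obtained by splitting $Y_K = Y_K^0 + \epsilon \tilde Y_K$ in \eqref{nablaJep} by linearity of the Lyapunov solution in its source. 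Since $\mathbb K_c$ is compact by Lemma~\ref{lem_coer} and $\Theta_K$ is uniformly Schur on it, the maps $K \mapsto W_K$ and $K \mapsto \tilde Y_K$ are continuous and hence bounded on $\mathbb K_c$; setting $\beta := 2\,\|\Gamma\|\,\sup_{K \in \mathbb K_c}\|W_K\|\,\|\tilde Y_K\|$ gives a finite constant independent of $\overline K$ and $\epsilon$, and $\|\nabla J(\overline K)\|_F \le \beta\epsilon$, which is \eqref{eps_stationary}.

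The main obstacle I anticipate is purely a matter of bookkeeping rather than a conceptual difficulty: one must confirm that the perturbation argument of Lemma~\ref{lem_nablaJ} carries over verbatim to the identity source (the positivity $Y_K>0$ used there is unnecessary for differentiating $\tr(\Phi_K)$), and that the supremum defining $\beta$ is genuinely finite, which hinges on the compactness of $\mathbb K_c$ and the resulting uniform spectral-radius bound $\sup_{K\in\mathbb K_c}\rho(\Theta_K)<1$ guaranteeing uniform boundedness of the Lyapunov solutions $\tilde Y_K$ and of $\Phi_K$ (hence $W_K$).
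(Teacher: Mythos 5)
Your proposal is correct and follows essentially the same route as the paper's proof: differentiating the identity \eqref{Jerr} to get $\nabla J(\overline K) = -\epsilon\,\nabla\gamma_{\overline K}$, writing $\gamma_K = \tr(\Phi_K)$ and adapting the perturbation argument of Lemma~\ref{lem_nablaJ} to obtain $\nabla\gamma_K = 2W_K\bigl(\sum_{t=0}^{\infty}\Theta_K^t\Theta_K^{t\top}\bigr)\Gamma^{\top}$ (your $\tilde Y_K$ is exactly this series), and then bounding $\|\nabla\gamma_K\|_F$ uniformly by compactness of $\mathbb K_c$. Your extra remarks on the linearity of the Lyapunov solution in its source and the uniform spectral-radius bound merely make explicit steps the paper leaves implicit.
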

\begin{proof}
By taking the derivative of \eqref{Jerr} with respect to $K$, we have $\nabla J^{\epsilon}(K) = \nabla J(K) + \nabla \gamma_K{\epsilon}$. By letting $K \leftarrow \overline{K}$, we have $\nabla J(\overline{K}) = -\nabla \gamma_{\overline{K}}{\epsilon}$. For any $K \in \mathbb K$, note here that $\gamma_K = \tr(\Phi_K)$ where $\Phi_K \geq 0$ is the solution to \eqref{lyap}. Thus, similarly to Lemma~\ref{lem_nablaJ}, $\nabla \gamma_K = 2W_K(\sum_{t=0}^{\infty}\Theta_K^t \Theta_K^{t\top})\Gamma^{\top}$ where $W_K$ is defined in \eqref{EKH1}. Since $\mathbb K_c$ is compact, there exists $\beta > 0$ such that $\|\nabla \gamma_{K}\|_F \leq \beta $ holds for any $K \in \mathbb K_c$. Therefore, the claim follows.  
\end{proof}
This lemma and \eqref{statio_Je} yield \eqref{statio_Jori}. This completes the proof. \qed

\bibliographystyle{elsarticle-num}
\bibliography{refs}

\end{document}